\let\oldvec\vec
\let\vec\oldvec
\newcommand{\pa}{\mathsf{par}}
\newcommand{\dgr}{\mathsf{deg}}
\newcommand{\dist}{\mathsf{dist}}
\DeclareMathOperator{\diam}{diam}
\DeclareMathOperator{\rc}{rc}
\DeclareMathOperator{\f}{f}
\DeclareMathOperator{\src}{src}
\DeclareMathOperator{\fvs}{fvs}
\newtheorem{rul}{Coloring Rule}
\newtheorem{prul}{Path Rule}
\newtheorem{observation}{Observation}
\newtheorem{invariant}{Invariant}
\newcommand{\calF}{\mathcal{F}}
\newcommand{\calT}{\mathcal{T}}
\newcommand{\one}[1]{(#1)_1}
\newcommand{\two}[1]{(#1)_2}
\newcommand{\dir}[1]{\vv{#1}}
\newcommand{\st}[2]{\mathtt{ST}(#1,#2)}
\newcommand{\ct}[2]{\mathtt{CT}(#1,#2)}
\newcommand{\und}[1]{\tilde{#1}}
\newcommand{\nth}{^\text{th}}
\let\doendproof\endproof
\renewcommand\endproof{~\hfill\qed\doendproof}
\title{Upper Bounding Rainbow Connection Number by Forest Number}
\author{L. Sunil Chandran \inst{1}\thanks{The major part of the work was done when this author was on a long-term research visit at Max Planck Institute for Informatics, Saarbr\"{u}cken, Germany. The visit was funded by the Alexander von Humboldt fellowship.} \and Davis Issac \inst{2}\thanks{The major part of the work was done when this author was a PhD student at Max Planck Institute for Informatics, Saarbr\"{u}cken, Germany.} \and Juho Lauri \thanks{The major part of the work was done when this author was at Bell Labs, Dublin, Ireland.}\and Erik Jan van Leeuwen \inst{3}}
\institute{
	Indian Institute of Science, Bangalore, India. \and
	Hasso Plattner Institute, Potsdam, Germany. \and
	Utrecht University, The Netherlands.
}
\begin{document}
\maketitle
\begin{abstract}
A path in an edge-colored graph is \emph{rainbow} if no two edges of it are colored the same, and the
graph is \emph{rainbow-connected} if there is a rainbow path between each pair of its vertices.
The minimum number of colors needed to rainbow-connect a graph $G$ is the \emph{rainbow connection number} of $G$, denoted by $\rc(G)$.

\hspace{24pt} A simple way to rainbow-connect a graph $G$ is to color the edges of a \emph{spanning tree} with  distinct colors  and
then re-use any of these colors to color the remaining edges of $G$. This proves that $\rc(G) \le  |V(G)|-1$. We ask whether there is
a stronger connection between tree-like structures and rainbow coloring than that is implied by the above trivial argument. 
For instance,  is it possible to find an upper bound of $t(G) -1$ for $\rc(G)$, where $t(G)$ is the number of vertices in the largest induced tree  of $G$?  The answer turns out to be negative, as there are counter-examples that show that even $c\cdot t(G)$ is not an upper bound for $\rc(G)$  for any given constant $c$.  

\hspace{24pt} In this work we show that if we consider the \emph{forest number} $\f(G)$, the number of vertices in  a maximum induced forest of $G$, instead of  $t(G)$, then surprisingly we do get an upper bound. More specifically, we prove
that ${\rc(G) \leq \f(G) + 2}$.
Our result indicates a stronger connection between rainbow connection and tree-like structures than that was suggested by the simple spanning tree based upper bound.
\end{abstract}
\keywords{rainbow connection, forest number, upper bound}
\section{Introduction}
Let $G$ be a connected, simple and finite graph.
Consider any edge-coloring of $G$.
A path in $G$ is said to be \emph{rainbow} if no two edges of it are colored the same.
The graph $G$ is {\bf rainbow-connected} if there is a rainbow path between each pair of its vertices.
If there is a rainbow \emph{shortest} path between every pair of its vertices, we say that $G$ is {\bf strongly rainbow-connected}.
The minimum number of colors required to rainbow-connect $G$ is known as the {
\bf rainbow connection number} of $G$, and denoted as $\rc(G)$.
Similarly, the minimum number of colors needed to strongly rainbow-connect $G$ is the {
\bf strong rainbow connection number} of $G$, denoted as $\src(G)$.
These measures of rainbow connectivity were introduced by Chartrand~{et al.}~\cite{Chartrand2008} in 2008.
The concept has gathered significant attention from both combinatorial and algorithmic perspectives. 
Indeed, the work of Chartrand~{et al.}~\cite{Chartrand2008} has already amassed more than 400 citations.
In addition to being a theoretically interesting way of strengthening the usual notion of connectivity, rainbow connectivity has potential applications in networking~\cite{Chakraborty2009}, layered encryption~\cite{Dorbec2014}, and broadcast scheduling~\cite{Joseph2013}.

While introducing the parameters, Chartrand~{et al.}~\cite{Chartrand2008} established basic bounds along with exact values of the parameters for some structured graphs.
To repeat their results, recall that the \emph{diameter} of $G$, denoted by $\diam(G)$, is the length of a longest shortest path in $G$.
Now, it is straightforward to verify that $\diam(G) \leq \rc(G) \leq \src(G) \leq m$, where $m$ is the number of edges of $G$.
In other words, both $\rc(G)$ and $\src(G)$ are always sandwiched between one and $m$.
The extremal cases are not difficult to see: $\rc(G) = \src(G) = 1$ if and only if $G$ is complete;
 $\rc(G) = \src(G) = m$ if and only if $G$ is a tree.
The authors also determined the exact rainbow connection numbers for cycle graphs, wheel graphs, and complete multipartite graphs.

Much of the research on rainbow connectivity has focused on finding bounds on the parameters, either in terms of the number of vertices $n$ or some other well-known parameters.
It follows that $\rc(G)\le n-1$ by taking a spanning tree and coloring its edges with distinct colors, and repeating an already used color for the other edges.
For 2-connected graphs, Ekstein~{et al.}~\cite{Ekstein2013} showed that $\rc(G) \leq \lceil n/2 \rceil$, and this is tight as witnessed by e.g., odd cycles. 
Further, it has turned out that domination is a useful concept when deriving upper bounds on $\rc(G)$ (see e.g.,~\cite{Caro2008:ejc,Krivelevich2010,Chandran2011}). 
Specifically, Krivelevich and Yuster~\cite{Krivelevich2010} showed that $\rc(G)\leq \tfrac{20n}{\delta}$, later improved by 
Chandran~{et al.}~\cite{Chandran2011} to $\rc(G) \leq \tfrac{3n}{\delta+1}+3$, where $\delta$ denotes the minimum degree of~$G$. 
Moreover, the latter authors derived that when $\delta \geq 2$, then $\rc(G) \leq \gamma_c(G) + 2$, where $\gamma_c(G)$ is the connected domination number. 
For some structured graph classes, this leads to upper bounds of the form $\rc(G) \leq \diam(G) + c$, where $c$ is a small constant. 
For instance, it follows that $\rc(G) \leq \diam(G) + 1$ when $G$ is an interval graph and $\rc(G) \leq \diam(G) + 3$, when $G$ is an AT-free graph, both bounds holding when $\delta \geq 2$.
Basavaraju et. al.~\cite{basavaraju2014rainbow} show that for every bridgeless graph $G$ with radius $r$, $\rc(G) \le r(r + 2)$,
and for a bridgelss graph with radius $r$ and chordality (length of a largest induced cycle) $k$, $\rc(G)\le  rk$.

In addition to domination, various authors (see e.g.,~\cite{Caro2008:ejc}) have noted trees to be useful in bounding $\rc(G)$.
As mentioned earlier, $\rc(G) \leq n - 1$ follows by coloring the edges of a spanning tree of $G$ with distinct colors.
Moreover, Kam{\v{c}}ev~{et al.}~\cite{Kamcev2015} proved that $\rc(G) \leq \diam(G_1) + \diam(G_2) + c$, where $G_1=(V,E_1)$ and $G_2=(V,E_2)$ are connected spanning subgraphs of $G$ and $c \leq | E_1 \cap E_2 |$.
For a more comprehensive treatment, we refer the curious reader to the books~\cite{Chartrand-book,Li2012-monograph} and the surveys~\cite{Li2012-survey,Li2017-survey} on rainbow connectivity.

In light of the above results, 
it makes sense to search for bounds on $\rc(G)$ in terms of other graph parameters, that possibly arise from ``tree-related'' and ``dominating" graph structures.
Intuitively, a graph structure that has both characteristics is a maximum induced forest of a graph.
Hence, the question arises whether one can bound $\rc(G)$ in terms of its {\bf forest number} $\f(G)$, the number of vertices in the largest induced forest in the graph.
We answer this in the affirmative by proving the following theorem.
\begin{theorem}
	\label{thm:rc-fc}
A connected graph $G$ with forest number $\f(G)$ has $\rc(G) \leq \f(G) + 2$.
\end{theorem}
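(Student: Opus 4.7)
The strategy is to use a maximum induced forest as a colourful ``backbone'' and to extend its colouring carefully to the rest of $G$. Let $F$ be a maximum induced forest of $G$, let $T_1,\ldots,T_k$ be its connected components (trees), and let $S := V(G) \setminus V(F)$.

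First I would record the structural consequence of maximality: for each $v \in S$, the induced subgraph $G[V(F) \cup \{v\}]$ must contain a cycle, else $F \cup \{v\}$ would be a strictly larger induced forest. Since $F$ is acyclic, any such cycle must pass through $v$, so $v$ must have at least two neighbours in a single tree $T(v) \in \{T_1,\ldots,T_k\}$. Fix two such neighbours $a_v, b_v \in V(T(v))$ for every $v \in S$, and let $P_v$ be the $(a_v,b_v)$-path in $T(v)$; the cycle $C_v := P_v \cup \{v a_v, v b_v\}$ will play a central role.

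I would then describe the colouring in three layers, totalling $f(G) + 2$ colours. Layer~1 gives every edge of $F$ its own distinct colour (costing $f(G) - k$ colours). Layer~2 introduces two boundary colours $\alpha$ and $\beta$; for each $v \in S$ I set $v a_v \mapsto \alpha$ and $v b_v \mapsto \beta$. Each cycle $C_v$ is now rainbow, and any two vertices that lie in the same ``extended tree'' $T_i \cup \{v \in S : T(v) = T_i\}$ are rainbow-connected by traversing $F$ and entering/exiting the $S$-side through $\alpha$ and $\beta$, which are disjoint from the $F$-palette. Layer~3 handles rainbow connectivity between different extended trees: I would build an auxiliary graph $H$ on the $k$ extended trees, whose edges correspond to short $G$-paths (through $S$) linking them, and then pick a spanning tree $\tau$ of $H$ (which exists because $G$ is connected), colouring the $G$-edges realising $\tau$ with $k$ fresh ``bridging'' colours $\gamma_1,\ldots,\gamma_k$. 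Remaining edges of $G$ receive any already-used colour. The arithmetic checks out: $(f(G)-k) + 2 + k = f(G)+2$.

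Verification would proceed by cases on the location of the two endpoints: (i) within a single tree $T_i$, the rainbow path lives entirely in $F$; (ii) within a single extended tree, use $F$-edges plus one of $\alpha,\beta$; (iii) across extended trees, combine within-tree traversals in $F$ (which use disjoint $F$-colours across different $T_i$'s) with boundary edges coloured $\alpha,\beta$ and bridging edges coloured $\gamma_j$. The hard part will be case~(iii): a single rainbow path may pass through several $S$-vertices in succession, and at each $S$-vertex it must use an $\alpha$- or $\beta$-coloured boundary edge, so with only two boundary colours a chain that touches three or more $S$-vertices threatens a collision. Overcoming this is the central technical obstacle: one must exploit the freedom to choose either $a_v$ or $b_v$ at each passage through $v \in S$, and design the bridging structure $\tau$ (together with the precise placement of the $\gamma_j$'s) so that for \emph{every} pair of endpoints some consistent alternation of $\alpha,\beta$ along the path is feasible. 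Making this delicate choice work uniformly, while staying within the $f(G) + 2$ budget, is where I expect the real work of the proof to lie.
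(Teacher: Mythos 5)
There is a genuine gap, and it sits exactly where you predict: your Layer~2 cannot survive a path that must transit the set $S$ more than twice. In your scheme \emph{every} vertex $v\in S$ has its two attachment edges colored from the same two-element palette $\{\alpha,\beta\}$. Since distinct trees of the forest have no edges between them, every bridge between two extended trees must pass through at least one $S$-vertex, and each time a connecting path enters or leaves an intermediate extended tree through an $S$-vertex it burns one of $\alpha,\beta$. A route in your spanning tree $\tau$ that visits even one intermediate extended tree whose entry and exit points are both $S$-vertices, with an $S$-endpoint elsewhere on the path, already needs three or more edges from $\{\alpha,\beta\}$. The freedom to choose $a_v$ versus $b_v$ does not help: it only decides \emph{which} of the two colors is consumed at $v$, not whether one is consumed, so no alternation scheme can rescue a chain through three or more $S$-vertices. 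This is not a verification detail to be filled in later; it is the entire difficulty of the theorem, and your color allocation makes it unsolvable rather than merely delicate.

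The paper's proof inverts your allocation. It contracts each tree to a single vertex, builds a spanning in-arborescence (the skeleton) of the contracted graph chosen to maximize, lexicographically, the number of contracted-tree edges with two representatives and then the degrees level by level; the two global colors are spent \emph{only} on the attachment edges of $S$-vertices that are leaves of this skeleton, i.e.\ only on the first and last hop of a path, where at most one use of each global color can ever occur. All intermediate transits through $S$ are paid for by one surplus color per tree \emph{plus colors recycled from inside the trees themselves}: a 2-edge from a tree $T$ to an $S$-vertex $u$ can safely reuse the color of an edge on the path in $T$ between $u$'s two neighbors, because one of the two representatives always avoids that edge. Making this recycling consistent is what forces the extremal choice of forest (fewest components) and skeleton and the long case analysis on 2-edge degrees. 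Your sketch has the right budget arithmetic and the right first layer, but without redesigning Layers~2 and~3 along these lines the construction does not yield a rainbow coloring.
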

Observe that the bound is tight up to an additive factor of $3$ due to trees that have $\rc(G)=n-1=\f(G)-1$. 
Our bound improves the upper bound of $n-1$ obtained by coloring the edges of a spanning tree in distinct colors, except when $\f(G)\ge n-2$. We leave as an open problem the question of whether the stronger upper bound of $\f(G)-1$ is true.

One might be tempted to conjecture a strengthening of our bound, namely that $\rc(G)$ is at most $t(G)$, the number of vertices in the largest induced \emph{tree} in the graph. 
However, this turns out to be not true.
To see this, one can consider a graph $G$ obtained by taking a $K_k$ for any $k \geq 3$ with a pendant vertex attached to each of its vertices. Then, we have that $\rc(G)=k$ whereas $t(G)=4$.

Finally, we note that the complement of an induced forest is a \emph{feedback vertex set}. The \emph{feedback vertex set number} is the size of the smallest set of vertices in a graph whose removal leaves an induced forest. Hence, Theorem~\ref{thm:rc-fc} directly implies the following.
\begin{corollary}
A connected graph $G$ with feedback vertex set number $\fvs(G)$ has $\rc(G) \leq |V(G)| - \fvs(G) + 2$.
\end{corollary}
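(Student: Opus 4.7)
The plan is to start by fixing a maximum induced forest $F$ of $G$ with components $T_1,\ldots,T_c$ and letting $S := V(G)\setminus V(F)$. Three structural consequences of maximality will be the workhorses: every $v \in S$ has at least two neighbors in a common component of $F$ (else $F\cup\{v\}$ would induce a larger forest), so I fix such a component $T(v)$ and two such neighbors $u_1(v),u_2(v)\in T(v)$; since $F = G[V(F)]$ is induced, $G$ has no edge between two different components of $F$, so the components communicate only through $S$; and $V(F)$ is a dominating set of $G$.

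I would then construct an edge coloring of $G$ using exactly $\f(G)+2$ colors in three groups. First, each of the $\f(G)-c$ edges of $F$ receives its own private ``tree color''. Second, two ``owner colors'' $\alpha,\beta$ are used: for every $v\in S$ I set $(v,u_1(v))\mapsto\alpha$ and $(v,u_2(v))\mapsto\beta$. Third, $c$ ``component colors'' $\gamma_1,\ldots,\gamma_c$ are used: for every edge $(v,u)$ with $v\in S$, $u\in T_i$, and $T(v)\ne T_i$ I set $(v,u)\mapsto\gamma_i$. Any remaining edge of $G$ (an extra within-$T(v)$ edge of $v$ beyond $u_1(v),u_2(v)$, or an edge entirely inside $S$) is colored arbitrarily with an already-used color.

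Rainbow-connectivity is then verified by case analysis on the endpoint pair. A pair inside one component of $F$ uses the unique $F$-path. A pair $(x,y)$ with $x\in T_i$ and $y\in S$ uses an $F$-path from $x$ to a suitable neighbor of $y$, capped by one $\alpha$-, $\beta$-, or $\gamma_i$-edge into $y$. The substantial case is a pair with endpoints in two different components of $F$: here I would route along a fixed spanning tree $T^{\#}$ of the bipartite graph between $\{T_1,\ldots,T_c\}$ and $S$ (with edges reflecting adjacency in $G$), producing a bridging sequence $T_{k_0},v_1,T_{k_1},\ldots,v_\ell,T_{k_\ell}$ which is unfolded into a walk in $G$ by inserting $F$-paths inside each crossed component and choosing concrete $G$-edges at each bridging $v_r$.

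The hard part will be proving that this unfolded walk can always be made rainbow, and this is where I expect the main technical effort to lie. Two sources of repetition threaten: the color $\gamma_{k_r}$ of an intermediate $T_{k_r}$ could a priori appear on both the entering and exiting edges of the walk at $T_{k_r}$, and the owner colors $\alpha,\beta$ could be forced to repeat across several owning crossings. My plan is to resolve both by a coordinated choice of $T^{\#}$, the $T(v)$ assignments (when $v$ has at least two neighbors in several components), and the $u_1$-versus-$u_2$ choice at each owning crossing, so that at every intermediate $T_{k_r}$ at least one adjacent bridging vertex owns it (letting $\alpha$ or $\beta$ replace a second $\gamma_{k_r}$-edge) and so that the $\alpha,\beta$ assignments along the walk are mutually distinct. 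Establishing that these two constraints can always be satisfied simultaneously, by leveraging the two-neighbor freedom provided by the maximality of $F$ together with the flexibility in choosing $T^{\#}$ and the owner assignments, is the principal technical step of the proof.
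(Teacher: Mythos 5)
You have missed that this statement is an immediate consequence of Theorem~\ref{thm:rc-fc}: the complement of a maximum induced forest is a minimum feedback vertex set, so $\f(G)=|V(G)|-\fvs(G)$ and the corollary follows by substitution. That one-line observation is the paper's entire proof. Instead, you set out to re-prove the full bound $\rc(G)\le \f(G)+2$ from scratch, and your sketch does not close: you yourself defer ``the principal technical step'' (that the owner assignments and the choice of $T^{\#}$ can be coordinated globally so that every pair of vertices gets a rainbow route), so what you have is a plan rather than a proof.

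Moreover, the plan as stated has two concrete failure points. First, your routing structure $T^{\#}$ is a spanning tree of the bipartite graph between the components $T_1,\dots,T_c$ and $S$, but that bipartite graph need not be connected: two components of $F$ may only be joinable through an edge inside $S$ (e.g.\ $b_1\in S$ adjacent only to $T_1$ and to $b_2\in S$, with $b_2$ adjacent only to $T_2$ and $b_1$), and you relegate all $S$--$S$ edges to ``colored arbitrarily with an already-used color'' and never route through them. The paper's auxiliary graph $H$ explicitly keeps $E(G[S])$ as $1$-edges for exactly this reason. Second, the color budget at intermediate crossings is too small. On a route $T_{k_0},v_1,T_{k_1},\dots,v_\ell,T_{k_\ell}$, each intermediate component $T_{k_r}$ is entered and left by edges that are both colored $\gamma_{k_r}$ unless one of $v_r,v_{r+1}$ owns $T_{k_r}$, in which case that edge consumes $\alpha$ or $\beta$; since $\alpha$ and $\beta$ may each appear only once on a rainbow path, at most two intermediate components can be rescued, and any pair forcing three or more intermediate components defeats the scheme. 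No amount of re-choosing $T^{\#}$ or the ownership map avoids this, because it is a counting obstruction. The paper escapes it by recycling the \emph{internal} colors of a tree onto the crossing edges (coloring a $2$-edge with the color of an edge on its foot-path inside $T$, then routing inside $T$ so as to avoid that edge), and by reserving the two global colors exclusively for the first and last hop of a path; this reuse of forest-edge colors on non-forest edges is the key device your proposal lacks.
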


\subsection{Overview of our Techniques}
\label{sub:overview_of_our_techniques}
We give the proof of the upper bound in three takes. In Take~1 in Section~\ref{sub:easy_weaker_bound}, we give a short proof that $\rc(G)\le3\f(G)+1$. For this, we first observe that given a connected dominating set $D$ such that each of the remaining vertices have at least two neighbors in $D$, it is easy to find a rainbow coloring with $|D|+1$ colors. Then we observe that any maximal induced forest $F$ can be turned into such a connected dominating set by adding at most $2|F|$ more vertices. The bound of $3\f(G)+1$ follows from these two observations.

In Take~2 in Section~\ref{sub:take2}, we strengthen our bound from $3\f(G)+1$ to $2\f(G)+2$. 
In this section, we already introduce the main structures and insights used for our final bound in Take 3. 
We fix a maximum induced forest $F$
and define $H$ to be the graph obtained from $G$ by contracting each connected component of $G-F$, each of which is a tree, of $G$ into a single vertex.
Thus $H$ consists of \emph{tree vertices} and \emph{non-tree vertices}.
An edge from a non-tree vertex $u$ to a tree-vertex $x_T$ is classified as a $2$-edge if $u$ has at least two edges to the tree $T$ (the tree that was contracted into the tree-vertex $x_T$), and as a $1$-edge otherwise.
We fix a carefully chosen spanning tree of $H$, root it at some (contracted) tree vertex, and direct all the edges towards root. We call this the \emph{skeleton} $B$.
The skeleton is chosen so that the number of $2$-edges in it is maximized.
The \emph{inner skeleton} $B_1$ is defined to be $B$ minus the leaves of $B$ that are non-tree vertices.

Our idea is to color all the edges of the forest $F$ with distinct colors and then associate each tree of $F$ with two additional colors called \emph{surplus colors}, and also keep aside two \emph{global surplus colors}. Note that this makes the total number of colors $2\f(G)+2$ as required. 
Then, we show that the $2$ surplus colors per tree are sufficient to color the edges of the inner skeleton $B_1$ so that there is a rainbow path between every pair of vertices in $B_1$, also giving a corresponding rainbow path in $G$ between every pair of vertices in $B_1$.
We show that if each tree $T$ can take care of the first three edges while following the outward path to the root, then we cover all edges of $B_1$. 
Although we only have two surplus colors, the three edges can indeed be taken care of.
For this, we show that the third edge is not taken care of by any other tree only when the first outgoing edge is a $2$-edge.
In this case, this out-going $2$-edge, say $x_Tu$ can be colored with the same color as some edge in the path between the $2$ neighbors of $u$ in $T$, while still maintaining a rainbow connection in $G$ between every pair of vertices in $B_1$.

After rainbow-connecting $B_1$, we connect the vertices outside of the inner skeleton to the inner skeleton using the two global surplus colors. From the choice of the skeleton, we have that each outer vertex has a $2$-edge to at least one tree-vertex. Thus, the outer vertices have at least two edges to one tree, that we color with the two different global surplus colors. Thus to get a rainbow path between two outer vertices $x$ and $y$, we can travel from $x$ into the inner skeleton using global surplus color 1, then use the path inside the inner skeleton, and then go to $y$ using global surplus color 2.

In Take $3$ in Section~\ref{sec:take3}, we improve the bound from $2\f(G)+2$ to $ \f(G)+2$. The improvement comes from the fact that we use only one surplus color per tree instead of two. In order to make the coloring of $B_1$ work with one surplus color per tree, we do a case analysis to color the edges around a vertex in $B_1$. The cases are differentiated mainly on the basis of the number of edges and the number of $2$-edges incident on a vertex.


\subsection{Preliminaries} 
\label{sec:notation}

For a graph $G$, a subgraph $H$ of $G$, and any $E'\subseteq E(G)$, we use $E'(H)$ to denote $E'\cap E(H)$.
For a vertex $v$ of (di)graph $G$, we use $\deg_G(v)$ to denote the degree of $v$ in $G$.
We use $\dist_G(u,v)$ to denote number of vertices in any shortest path between $u$ and $v$ in $G$. 
For graph $G$ and $S\subseteq V(G)$, we define $G\setminus S:= G[V(G)\setminus S]$.
We use $uv$ for an edge between $u$ to $v$ and for a directed edge from $u$ to $v$, we use $\dir{uv}$. 
For the latter, we may omit the arrow, when the direction is not relevant.
For a directed graph $G$, we denote by $\und{G}$,
the underlying undirected graph of it.
Since, for a forest $\calF$, each connected component is a tree, we will use the phrases ``tree of $\calF$" and ``connected component of $\calF$" analogously.
An \emph{in-arborescence} is a directed graph with a special root vertex such that all vertices have a unique directed path to the root vertex.
For a tree $T$ and vertices $u$ and $v$ in $T$,
we use $T_{uv}$ to denote the unique path in $T$ between $u$ and $v$.
\section{Proof of Theorem~\ref{thm:rc-fc}} 
\label{sec:proof_of_main_theorem}
Let $G=(V,E)$ be a connected graph.
Our goal is to prove that $\rc(G) \le \f(G)+2$.
In what is to follow, we proceed incrementally by starting from a weaker bound, namely that $\rc(G) \leq 3\f(G)-1$ in Section~\ref{sub:easy_weaker_bound}, and then improving upon this by proving that $\rc(G) \leq 2\f(G) + 2$ in Section~\ref{sub:take2}.
Finally, in Section~\ref{sec:take3}, to prove our final result, we use the ideas presented in these proofs in a more detailed and technically involved manner.
\subsection{Take 1: 
\texorpdfstring{$\rc(G) \le 3\f(G)-1$}{}}
\label{sub:easy_weaker_bound}
In this subsection, as a starting point, we prove that $\rc(G) \leq 3\f(G)-1$ which is considerably weaker but significantly less involved than our main result.
We begin with the following simple construction of a rainbow coloring.
\begin{lemma}
	\label{lem:dom}
	If there is a set $D\subseteq V$ such that $G[D]$ is connected and every vertex in $V\setminus D$ has at least two neighbors in $D$, then $\rc(G) \le |D|+1$.
\end{lemma}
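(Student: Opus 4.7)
The plan is to construct an explicit rainbow coloring using $|D|+1$ colors. First I would fix a spanning tree $T$ of $G[D]$, which has exactly $|D|-1$ edges, and assign each of these edges a distinct color from a palette $\{1,2,\dots,|D|-1\}$. I would then introduce two fresh colors $c_1$ and $c_2$, bringing the total to $|D|+1$.

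Next, for each vertex $v \in V \setminus D$, I would exploit the hypothesis that $v$ has at least two neighbors in $D$: pick any two such neighbors $u_1^v, u_2^v$, and color the edge $vu_1^v$ with $c_1$ and the edge $vu_2^v$ with $c_2$. Every remaining edge of $G$ (edges of $G[D]$ not in $T$, extra edges from $V \setminus D$ to $D$, and edges inside $V \setminus D$) would be colored arbitrarily, say with $c_1$, since they will not be used by the rainbow paths we exhibit.

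The verification of rainbow connectivity then splits into three cases. For two vertices $x,y \in D$, the unique $x$--$y$ path in $T$ is rainbow because $T$'s edges all carry distinct colors. For $x \in D$ and $y \in V \setminus D$, follow the $T$-path from $x$ to $u_1^y$ and then take the edge $u_1^y y$ of color $c_1$; since $c_1$ is not among the $T$-colors, the path is rainbow. For $x, y \in V \setminus D$, go $x \to u_1^x$ via color $c_1$, then take the $T$-path from $u_1^x$ to $u_2^y$, then $u_2^y \to y$ via color $c_2$; since the endpoints $x,y$ lie outside $D$ the concatenation is a genuine simple path, and its colors are all distinct.

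There is essentially no serious obstacle here; the only care required is the case $x, y \in V \setminus D$ where one must notice that $x$ and $y$ cannot appear inside $T$ (as they are outside $D$), and that if $u_1^x$ happens to coincide with $u_2^y$ the length-two path $x \to u_1^x \to y$ with colors $c_1, c_2$ still works. Thus $|D|+1$ colors suffice, giving $\rc(G) \le |D|+1$.
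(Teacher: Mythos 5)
Your proposal is correct and matches the paper's proof essentially verbatim: a distinctly colored spanning tree of $G[D]$ plus two extra colors on edges to the two guaranteed neighbors in $D$, followed by the same three-case verification. The extra care you take about the degenerate case $u_1^x=u_2^y$ is fine and only strengthens the argument.
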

\begin{proof}
	Since $G[D]$ is connected, it has at least one spanning tree $T$.
	Pick any spanning tree $T$ of $G[D]$ and color its edges with distinct colors from $1$ to $|D|-1$.
	By the second precondition of the lemma, every vertex $v \in V \setminus D$ has at least two neighbors in $D$; call them $d_1(v)$ and $d_2(v)$.
	Color the edge $vd_1(v)$ with color $|D|$ and $vd_2(v)$ with color $|D|+1$.
	We will now prove that there is a rainbow path between any $u,v\in V(G)$.
	
	When $u,v \in V(D)$, the proof is trivial as $G[D]$ is rainbow-connected using only edges from $T$,
	all of which we colored distinctly.
	Similarly, when $u \in D$ and $v \in V \setminus D$, a rainbow path between the two uses only edges of $T$ plus either $vd_1(v)$ or $vd_2(v)$, both colored with a color not appearing on any edge in $T$.
	Finally, suppose both $u$ and $v$ are in $V \setminus D$.
	Observe that there is a rainbow path between $d_1(u)$ and $d_2(v)$ using only edges from $T$, particularly not using colors $|D|$ or $|D|+1$.
	Since $ud_1(u)$ is colored with color $|D|$ and $vd_2(v)$ is colored with color $|D|+1$, we have a rainbow path between $u$ and $v$, completing the proof.
\end{proof}
To proceed, let $\calF$ be a maximum induced forest of $G$.
Let $F = V(\calF)$, i.e., the set of vertices in $\calF$ and let $\calT$ denote the set of connected components of $\calF$.
The following structural observation will be useful for us.
\begin{lemma}
	\label{lem:tree2edge}
	For any maximum induced forest $\calF$ of $G$ and
	for any $v\in V\setminus F$,
	there exists a connected component $T$ of $\calF$ such that
	$v$ has at least two neighbors in $T$.
\end{lemma}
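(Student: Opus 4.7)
The plan is to argue by contradiction from the maximality of $\calF$. Suppose there exists some $v \in V \setminus F$ that has at most one neighbor in every connected component of $\calF$. I will show that the set $F \cup \{v\}$ then induces a forest strictly larger than $\calF$, contradicting the assumption that $\calF$ is a maximum induced forest.

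The key step is a simple cycle-based argument. Consider the induced subgraph $G[F \cup \{v\}]$. Any cycle in this graph is either entirely contained in $G[F] = \calF$, which is impossible since $\calF$ is a forest, or passes through $v$. A cycle passing through $v$ would use two distinct edges $vu_1$ and $vu_2$ with $u_1, u_2 \in F$, together with a $u_1$--$u_2$ path in $G[F \cup \{v\}]$ avoiding $v$; but such a path would lie entirely in $\calF$, forcing $u_1$ and $u_2$ to belong to the same tree of $\calF$. Under our hypothesis, $v$ has at most one neighbor in any tree, so no two such vertices $u_1 \ne u_2$ exist and no cycle through $v$ is possible. Hence $G[F \cup \{v\}]$ is an induced forest of size $|F|+1$, which contradicts the maximality of $\calF$.

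There is no real obstacle here: the statement follows from a one-line contradiction built on the standard observation that adding a vertex to an induced forest preserves the forest property precisely when that vertex has at most one neighbor in each connected component. The only thing to be careful about is to phrase the cycle analysis on the \emph{induced} subgraph $G[F \cup \{v\}]$ so that one genuinely uses that $\calF$ is a maximum \emph{induced} forest (not merely a maximum subgraph forest).
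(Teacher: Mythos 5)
Your proposal is correct and follows exactly the paper's argument, which simply states that otherwise $G[F \cup \{v\}]$ would be a forest, contradicting the maximality of $\calF$; you merely spell out the routine cycle analysis that the paper leaves implicit. No further comment is needed.
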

\begin{proof}
	Otherwise, $G[F \cup \{v\}]$ is a forest, contradicting the maximality of $\calF$.
\end{proof}
The proof of the following lemma is folklore after observing that $F$ is a dominating set of $G$ (by the maximality of $\calF$), and proved here only for the sake of completeness.
\begin{lemma}
	\label{lemma:forest-connected}
	There exists a vertex set $A\subseteq V\setminus F$ of size at most $2|F|-2$ such that $G[F\cup A]$ is connected.
\end{lemma}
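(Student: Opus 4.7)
The plan is to construct $A$ greedily. Starting from $A = \emptyset$, I iteratively merge connected components of $G[F \cup A]$ by attaching short paths through $V \setminus F$, adding at most two new vertices per step. The starting observation is that $F$ is a dominating set of $G$: if some $v \in V \setminus F$ had no neighbor in $F$, then $\calF \cup \{v\}$ would still induce a forest, contradicting the maximality of $\calF$. Initially $G[F \cup A] = G[F] = \calF$, which has exactly $|\calT|$ components.

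At each iteration, while $G[F \cup A]$ is disconnected, I pick a shortest path $P = v_0 v_1 \cdots v_\ell$ in $G$ whose endpoints lie in distinct components of $G[F \cup A]$ and whose internal vertices all lie in $V \setminus (F \cup A)$. The crux of the argument, and the only real technical point, is the claim that $\ell \leq 3$. Suppose for contradiction $\ell \geq 4$. Then $v_2$ is an internal vertex, so $v_2 \in V \setminus F$, and by domination $v_2$ has a neighbor $u \in F$ lying in some component $C^*$ of $G[F \cup A]$. If $C^*$ coincides with the component of $v_0$, then $u, v_2, v_3, \ldots, v_\ell$ is a strictly shorter inter-component path; otherwise $v_0, v_1, v_2, u$ is one. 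Either case contradicts the choice of $P$. Thus $\ell \leq 3$, so adding the internal vertices of $P$ to $A$ contributes at most two new vertices from $V \setminus F$, while strictly decreasing the number of components (perhaps by more than one if a newly added internal vertex has additional neighbors in $F \cup A$, but that only helps).

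Since each iteration reduces the component count by at least one, at most $|\calT| - 1$ iterations suffice to make $G[F \cup A]$ connected. Combining with the two-per-step bound and $|\calT| \leq |F|$ yields $|A| \leq 2(|\calT| - 1) \leq 2|F| - 2$, as desired. Everything apart from the path-length claim is routine accounting, and that claim itself rests squarely on the dominating-set property coming from the maximality of $\calF$.
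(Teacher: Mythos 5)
Your proof is correct and follows essentially the same route as the paper's: both arguments use the domination property of $F$ (a consequence of the maximality of $\calF$) to show that any two components of $G[F\cup A]$ can be joined through at most two vertices of $V\setminus F$ — you via a shortest inter-component path whose middle vertex would otherwise yield a shortcut, the paper via a minimal connecting set $B$ whose second vertex would otherwise be redundant — and then iterate at most $|F|-1$ times. The accounting at the end is identical.
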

\begin{proof}
	Let $B$ be a smallest subset of $V\setminus F$ such that $G[F\cup B]$ has a smaller
	number of connected components than $G[F]$.
	We show that $|B|\le 2$.
	From the minimality of $B$, it follows that $B$ induces a path in $G$ that connects two connected components of $\calF$.
	Let $b_1,b_2,\ldots, b_k$ be the vertices in this path, where
	 $b_1$ and $b_k$ are adjacent to some vertex in distinct connected components $T_1$ and $T_2$ respectively.
	Due to Lemma~\ref{lem:tree2edge}, $b_2$ has an edge to some connected component $T\in \calT$.
	If $T=T_1$, then $B\setminus \left\{ b_1 \right\}$ would also connect $T_1$ and $T_2$.
	Hence $T\neq T_1$.
	But then the vertex set $\left\{ b_1,b_2 \right\}$ is sufficient to connect $T$ and $T_1$ and thereby reduce the number of components.
	This implies $k=2$ and hence
	$|B|\le 2$.
	Now, we add $B$ to $F$ to reduce the number of connected components by at least one.
	Then we can repeat the process again until we get a single connected component.
	At each stage, the currently existing connected components take the role of the $T_i$'s.
	We only need to repeat the process at most $|F|-1$ times until we get a single connected component.
	Since each repetition adds at most $2$ vertices, the total number of vertices that we add is at most $2|F|-2$.
\end{proof}
We then arrive at the following conclusion.
\begin{theorem}
	Any connected graph $G$ has $\rc(G) \leq 3\f(G) - 1$.
\end{theorem}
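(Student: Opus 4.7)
The plan is to combine the three lemmas already established in this subsection essentially directly. I would let $\calF$ be a maximum induced forest of $G$ with vertex set $F$, so $|F| = \f(G)$. Applying Lemma~\ref{lemma:forest-connected} I would obtain a set $A \subseteq V \setminus F$ of size at most $2|F|-2$ such that $G[F \cup A]$ is connected. Setting $D := F \cup A$, this gives a connected set of size at most $|F| + 2|F| - 2 = 3\f(G) - 2$.

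Next I would verify the second precondition of Lemma~\ref{lem:dom}, namely that every vertex $v \in V \setminus D$ has at least two neighbors in $D$. Since $V \setminus D \subseteq V \setminus F$, Lemma~\ref{lem:tree2edge} guarantees that $v$ has at least two neighbors inside some tree $T$ of $\calF$, and these neighbors lie in $F \subseteq D$. Hence the precondition holds.

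Applying Lemma~\ref{lem:dom} with this choice of $D$ then yields
\[
\rc(G) \;\le\; |D| + 1 \;\le\; (3\f(G) - 2) + 1 \;=\; 3\f(G) - 1,
\]
which is the desired bound. There is no real obstacle here: all the work has already been done in Lemmas~\ref{lem:dom},~\ref{lem:tree2edge}, and~\ref{lemma:forest-connected}, and the proof of the theorem is simply the observation that $F \cup A$ satisfies both hypotheses of the dominating-set lemma while remaining small relative to $\f(G)$. The only minor point to double-check is that $A$ is indeed allowed to be empty (when $\calF$ is already a connected spanning-like structure or when $|F|=1$), so that the bound $2|F|-2$ holds trivially in the degenerate cases; this is handled by the construction in Lemma~\ref{lemma:forest-connected}, which adds vertices only when strictly needed to reduce the component count.
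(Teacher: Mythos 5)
Your proposal is correct and matches the paper's proof essentially verbatim: both take $D = F \cup A$ with $A$ from Lemma~\ref{lemma:forest-connected}, verify the two-neighbor condition via Lemma~\ref{lem:tree2edge}, and conclude with Lemma~\ref{lem:dom} to get $\rc(G) \le |D|+1 \le 3\f(G)-1$. No issues.
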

\begin{proof}
By Lemma~\ref{lemma:forest-connected}, we have a set $A$ such that $G[F\cup A]$ is connected and $|A|\le 2|F|-2$.
Let $D=F\cup A$.
By Lemma~\ref{lem:tree2edge}, we have that each vertex in $V\setminus F$ has at least two neighbors in $F$. 
This implies that each vertex in $V\setminus D$ has at least two neighbors in $D$. 
Thus, $D$ satisfies both preconditions of Lemma~\ref{lem:dom} and hence 	
$$\rc(G) \le |D|+1\le |F|+|A|+1\le |F|+(2|F|-2)+1=3\f(G)-1.$$
This completes the proof.
\end{proof}

\subsection{Take 2: 
\texorpdfstring{$\rc(G) \le 2\f(G)+2$}{}}
\label{sub:take2}
In this subsection we strengthen the previous upper bound to $2\f(G)+2$.
Many of the concepts and techniques that we use in the final proof are already introduced here.

As in the previous subsection, let $\calF$ be a maximum induced forest of $G$. 
Let $F = V(\cal F)$ be the set of vertices in $\cal F$.
Let $\calT$ be the set of connected components (trees) of $\calF$ and let $t=|\calT|$. 
Let $S:= V\setminus F$. 
Also, let $f=|V(F)|=\f(G)$.
We call an edge $uv$ of G a {\bf tree-edge} if both $u$ and $v$ belong to the same tree in $\mathcal{T}$; otherwise, the edge is called a {\bf non-tree edge}.

\begin{figure}[t]
    \centering
    \begin{subfigure}{0.49\textwidth}
    \centering
        \includegraphics[scale=0.72,keepaspectratio]{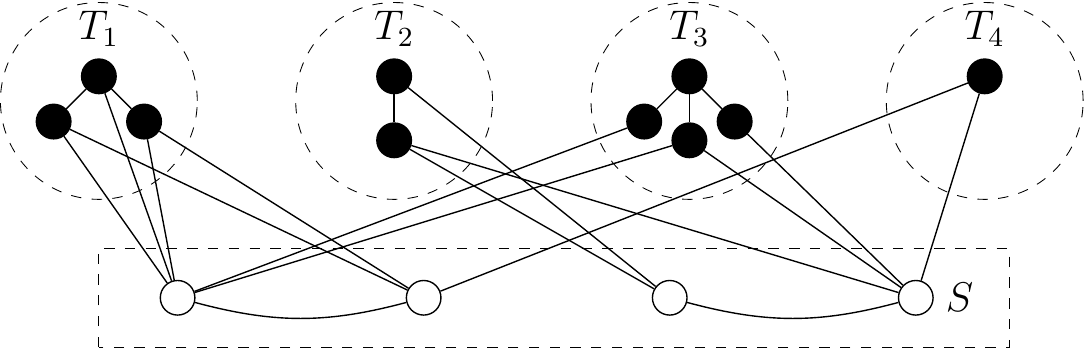}
        \caption{}
    \end{subfigure}%
    \hfill
    \begin{subfigure}{0.49\textwidth}
    \centering
		\includegraphics[scale=0.72,keepaspectratio]{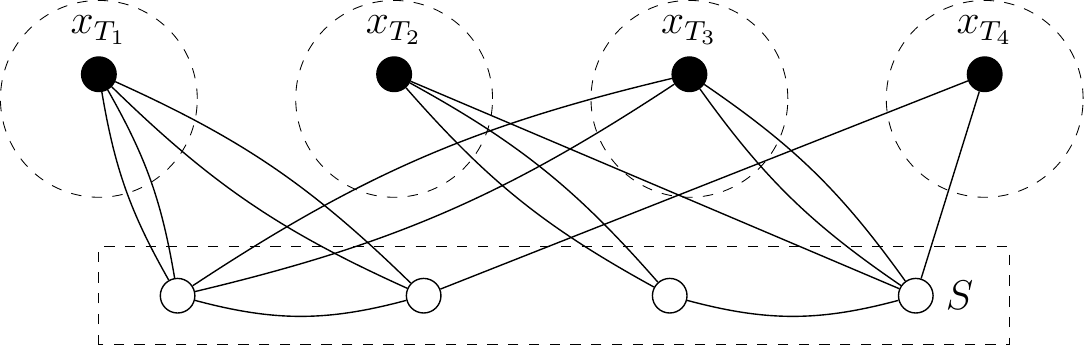}
		\caption{}
    \end{subfigure}
	\caption{\textbf{(a)} A graph $G$ is partitioned into a maximum induced forest $\calF$ and $S = V(G) \setminus V(\calF)$. The connected components (trees) of $\calF$ are $T_1,T_2,T_3$ and $T_4$. The edges between two black vertices, corresponding to vertices in $V(\calF)$, are tree-edges. \textbf{(b)} The graph $H$ obtained after contracting the connected components of $\calF$. We draw a 2-edge with $2$ lines and a 1-edge with a single line.}
	\label{fig:take2}
\end{figure}

\begin{figure}[b]
	\centering
	\begin{subfigure}{0.49\textwidth}
    \centering
	\includegraphics[scale=0.8,keepaspectratio]{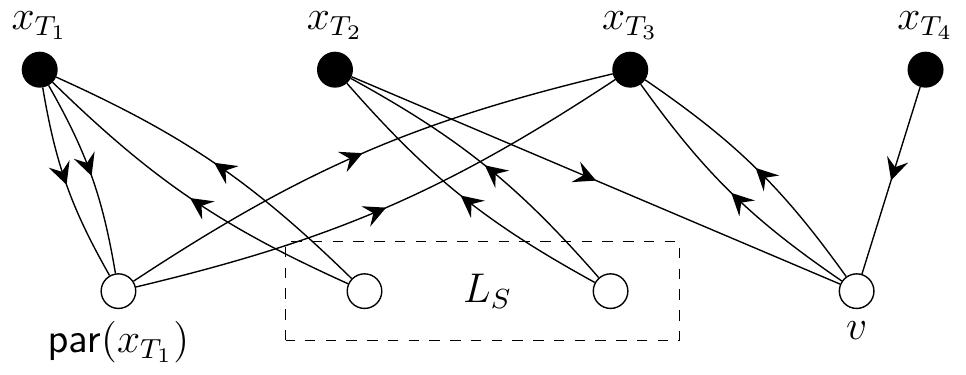}
		\caption{}
    \end{subfigure}%
    \hfill
	\begin{subfigure}{0.49\textwidth}
    \centering
		\includegraphics[scale=0.8,keepaspectratio]{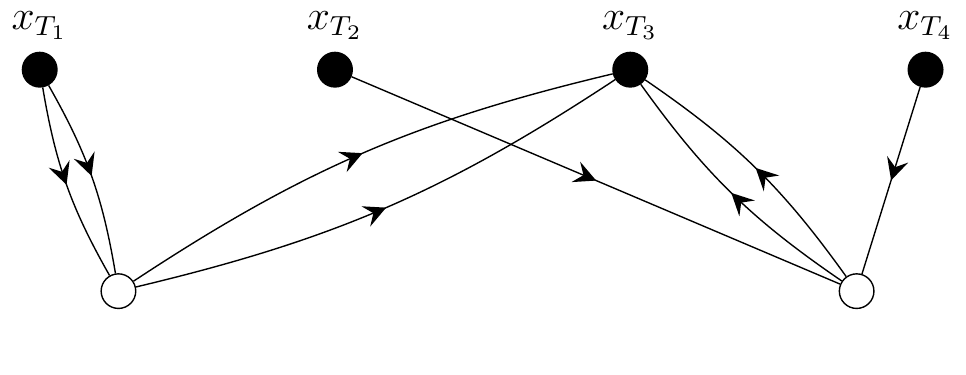}
        \caption{}
    \end{subfigure}
	\caption{\textbf{(a)} A skeleton $B$, where $x_{T_3}$ is the root. For vertex $v$, $x_{T_2}$ and $x_{T_4}$ are the children. \textbf{(b)} The inner skeleton $B_1 := B[V(B) \setminus L_S]$.}
    \label{fig:skeleton}
\end{figure}
\begin{figure}[t]
\centering
\includegraphics[scale=1.25,keepaspectratio]{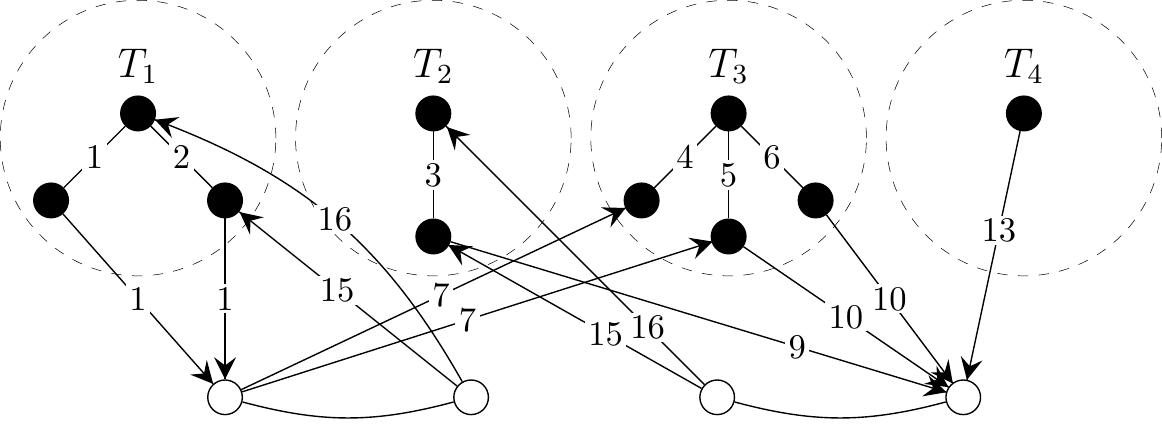}
\caption{The rainbow coloring from Take~2 for the graph $G$ in Figure~\ref{fig:take2}. As an example, the color-giving edge of $T_1$ is the edge colored with~1. Only relevant edges are shown. The direction of edges across $S$ (the white vertices) and $\calF$ (the black vertices) drawn according to the direction in $B_1$. The surplus colors are $s_1(T_1)=7$, $s_2(T_1)=8$, $s_1(T_2)=9$, $s_2(T_2)=10$, $s_1(T_3)=11$, $s_2(T_3)=12$, $s_1(T_4)=13$, and $s_2(T_4)=14$ while the global surplus colors are $g_1=15$ and $g_2=16$.}
\label{fig:take2-coloring}
\end{figure}

\begin{figure}[b]
\centering
\includegraphics[scale=1.0,keepaspectratio]{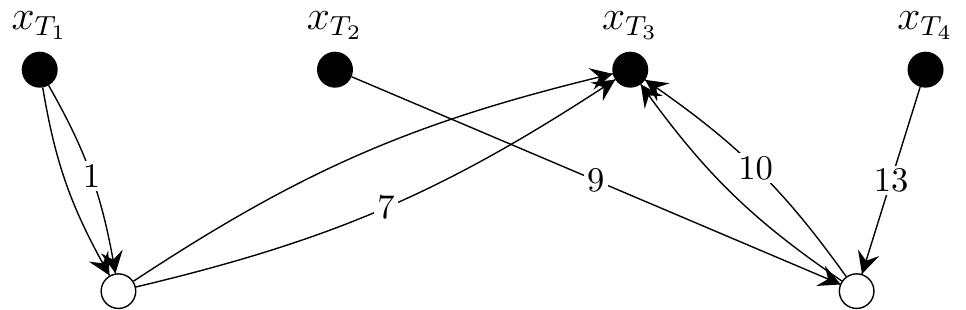}
\caption{Coloring of $B_1$ according to the coloring procedure in Take~2.}
\label{fig:skeleton-b1-coloring-take2}
\end{figure}

Let $H$ be the graph obtained from $G$ by contracting each connected component of $\calF$ to a single vertex
(see Figure~\ref{fig:take2}).
Formally, we define $H$ as:
\begin{align*}
	V(H):= &\; V_{\calT} \cup  S, \;\text{where}\\
	V_{\calT}:= &\left\{ x_T:T\in \calT \right\}\text{and} \\
	E(H):= 
	&\; E(G[S])\cup \left\{ ux_T: u\in S,\; T\in \calT,\; u \text{ has at least one edge to } V(T) \text{ in } G\right\}.
\end{align*}
We call the vertices in $V_{\calT}$ the {\bf tree vertices} and the vertices in $S$ the \emph{\bf non-tree vertices} of $H$.
Notice that $V_{\calT}$ is an independent set in $H$, because there are no edges in $G$ between any two distinct connected components of $\calF$. 
We partition the edges of $H$ into the following two sets:
\begin{align*}
	E_1:= &\; E(G[S])\cup \left\{ ux_T: u\in S,\; T\in \calT,\; u \text{ has exactly one edge to } V(T) \text{ in } G\right\}\text{and}\\
	E_2:= &\; \left\{ ux_T: u\in S,\; T\in \calT,\; u \text{ has at least two edges to } V(T) \text{ in } G\right\}.
\end{align*}
The edges in $E_1$ are called {\bf 1-edges} while those in $E_2$ are \emph{\bf 2-edges.}
See Figure~\ref{fig:take2} for an illustration of the above definitions.
We define a function $f_{\calT}:V_{\calT}\rightarrow \calT$ that maps a tree vertex to its corresponding tree, i.e., $f_\calT(x_T)=T$. 
For each edge in $H$, we define its {\bf representatives} in $G$ as follows.
Consider first a 2-edge $e$ between $u\in S$ and $x_T\in V_{\calT}$. By definition of a 2-edge, $u$ has at least two edges to $V(T)$ in $G$.
We arbitrarily choose two of these edges as the {representatives} in $G$ of the 2-edge $e$ in $G$ and denote them by $\one{e}$ and $\two{e}$.
For a $1$-edge $e$ between $u\in S$ and $x_T\in V_{\calT}$, 
there is a unique edge between $u$ and $V(T)$ in $G$, 
by the definition of a $1$-edge.
We call this edge the representative of $ux_T$ in $G$, and denote it $\one{e}$.
For a $1$-edge $e$ between $u\in S$ and $v\in S$, we call $uv$ its own representative in $G$.
For simplicity, we might simply say \emph{representatives} instead of representatives in $G$.
Whenever we say a representative, it is implicitly assumed that we are talking about an edge in $G$.
For a 2-edge $ux_T$ with representatives $uv_1$ and $uv_2$, we call the vertices $v_1$ and $v_2$, the {\bf foots} of $ux_T$.
The unique path between $v_1$ and $v_2$ in $T$ is called the {\bf foot-path} of $ux_T$.

A {\bf skeleton} is an in-arborescence obtained by taking a spanning tree of $H$ with an arbitrary node of $V_{\calT}$ fixed as its root with all edges directed towards the root. 
Let $B$ be a skeleton such that the number of 2-edges in $B$ is as large as possible (or equivalently, the number of $1$-edges is as small as possible, as the total number of edges in a skeleton is always $|V(H)|-1$).
The {\bf parent} of a non-root vertex $v$ in $B$, denoted by $\pa(v)$, is the unique out-neighbor of $v$ in $B$.
The {\bf children} of $v$ are the in-neighbors of $v$ in $B$.
Whenever we say the parent (or child), we mean the parent (or child) in $B$, even if $B$ is not mentioned explicitly.
We call a directed edge $\dir{uv}$ in $B$, a $1$-edge (or $2$-edge respectively),
if $uv$ is a $1$-edge (or $2$-edge respectively) in $H$.
Let $L_S$ be the set of vertices of $S$ that are leaves of $B$ and let $B_1$ be the sub-arborescence of $B$ defined as $B_1:=B[V(B)\setminus L_S]$.
We call $B_1$ the {\bf inner skeleton}. Let $\und{B_1}$ be the underlying undirected tree of $B_1$.
These concepts are illustrated in Figure~\ref{fig:skeleton}.

We now prove a lemma that is useful for our coloring procedure.
\begin{lemma}
	\label{lem:vertex2edge}
Every vertex in $S$ has at least one 2-edge incident on it in $B$.	
\end{lemma}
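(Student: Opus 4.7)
The plan is a short exchange argument exploiting the maximality of $B$ with respect to the number of 2-edges. Suppose for contradiction that some vertex $u \in S$ has no 2-edge incident on it in $B$. Since $u \in V \setminus F$, Lemma~\ref{lem:tree2edge} supplies a tree $T \in \calT$ in which $u$ has at least two neighbors, so $e := ux_T$ is a 2-edge of $H$. By our assumption $e \notin E(B)$.

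Next, I would add $e$ back to the underlying undirected spanning tree $\und{B}$. This creates a unique cycle $C$; since $u$ has degree exactly $2$ in $C$ and one of its incident edges in $C$ is $e$ itself, the other must be some $B$-edge $e'$ incident on $u$. Such an $e'$ indeed exists because $u$ cannot be the root of $B$ (the root was chosen from $V_{\calT}$), so $u$ has a parent in $B$ and hence at least one $B$-edge at $u$. By the contradictory assumption, every $B$-edge at $u$ is a 1-edge, and in particular $e'$ is a 1-edge.

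Finally, I would swap $e'$ for $e$, obtaining a new spanning tree of $H$, and then re-orient every edge toward the original root of $B$. The result is another in-arborescence with the same root, i.e.\ another valid skeleton, but with one fewer 1-edge and one more 2-edge than $B$. This contradicts the choice of $B$ as a skeleton with the maximum number of 2-edges, completing the proof.

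The only mildly subtle point is to verify that the edge swap preserves the in-arborescence structure; this is immediate from the fact that any spanning tree of $H$, once all edges are directed toward a fixed root, becomes an in-arborescence rooted there. So there is no genuine obstacle, and the whole argument rests on the single observation that replacing a 1-edge on the cycle $C$ by the 2-edge $e$ strictly improves the objective used to choose $B$.
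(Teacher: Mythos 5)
Your proof is correct and follows essentially the same exchange argument as the paper: both identify the $2$-edge $ux_T$ guaranteed by Lemma~\ref{lem:tree2edge} and swap it for the unique $B$-edge at $u$ on the tree path toward $x_T$ (which is a $1$-edge by assumption), contradicting the maximality of the number of $2$-edges in $B$. The only cosmetic difference is that you locate the edge to remove via the fundamental cycle while the paper phrases it via the component of $B\setminus u$ containing $x_T$; these are the same edge.
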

\begin{proof}
	Suppose for the sake of contradiction that $v$ is a vertex in $S$ that has only $1$-edges incident on it in $B$.
	By Lemma~\ref{lem:tree2edge}, there exist a $T\in \calT$ such that $v$ has at least two edges to $T$ in $G$.
	Therefore, $vx_T$ is a $2$-edge in $H$.
	Let $C$ be the connected component of $B\setminus v$ that contains the vertex $x_T$.
	Let $e$ be the unique edge in $B$ between $v$ and $C$.
	Removing $e$ from $B$ and adding 2-edge $vx_T$ gives a skeleton with higher number of 2-edges than $B$.
	This is a contradiction to the choice of $B$.
\end{proof}
The above lemma has the following corollaries.
\begin{corollary}
	\label{cor:L2edge}
	For every vertex in $L_S$, the unique edge incident on it in $B$ is a 2-edge.
\end{corollary}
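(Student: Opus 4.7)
The plan is to derive this as an immediate consequence of Lemma \ref{lem:vertex2edge}, essentially observing that ``at least one'' collapses to ``exactly that one'' when there is only a single candidate edge available.

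More concretely, I would first recall that by definition $L_S \subseteq S$ and that a leaf of the in-arborescence $B$ is a vertex with no children (i.e., no in-neighbors in $B$). Since the root of $B$ is chosen in $V_{\calT}$ and $L_S \subseteq S$, no vertex of $L_S$ is the root, so each such vertex has a parent in $B$ and hence exactly one incident edge in $B$, namely the edge to its parent.

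Next, I would invoke Lemma \ref{lem:vertex2edge}, which guarantees that every vertex $v \in S$ has at least one $2$-edge incident in $B$. Combining this with the previous observation: for $v \in L_S$, ``at least one'' $2$-edge among the incident edges forces the unique incident edge to actually be a $2$-edge.

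I do not foresee any real obstacle here; the statement is a direct corollary, and the only thing to be careful about is to justify why a leaf in $L_S$ has a well-defined unique incident edge in $B$ (which relies on the root being a tree vertex, so that no vertex of $L_S$ is simultaneously the root). Once that is noted, the conclusion follows in one line from Lemma \ref{lem:vertex2edge}.
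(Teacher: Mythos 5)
Your proof is correct and matches the paper's (implicit) argument exactly: the corollary is stated as an immediate consequence of Lemma~\ref{lem:vertex2edge}, since a leaf of $B$ in $S$ is not the root (the root is a tree vertex) and hence has exactly one incident edge, which must then be the guaranteed 2-edge. The extra care you take in justifying the uniqueness of the incident edge is a nice touch but does not change the route.
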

\begin{corollary}
	\label{cor:leafB1}
	Every leaf of $B_1$ is a tree vertex.
\end{corollary}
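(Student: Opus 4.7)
The plan is to argue by contradiction: assume some leaf $v$ of $B_1$ lies in $S$ rather than in $V_\calT$, and then use Corollary~\ref{cor:L2edge} together with the definition of a 2-edge to derive a contradiction. The key fact I want to exploit is that a 2-edge, by definition, has one endpoint in $S$ and the other in $V_\calT$, so a 2-edge cannot join two vertices of $S$.

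First, I would unpack what ``$v$ is a leaf of $B_1$'' means given that $v \in S$. Since $V(B_1) = V(B) \setminus L_S$, membership of $v$ in $V(B_1)$ already implies $v \notin L_S$, so $v$ is \emph{not} a leaf of $B$. Because the root of $B$ is chosen to be a tree vertex and $v \in S$, $v$ is not the root either; hence $v$ has at least one child in $B$. If all children of $v$ in $B$ remained in $B_1$, then $v$ would have at least two neighbors in $\und{B_1}$ (its parent and at least one child), contradicting $v$ being a leaf of $B_1$. Therefore every child of $v$ in $B$ must have been deleted when forming $B_1$, i.e., every child of $v$ lies in $L_S$.

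Now pick any such child $c$ of $v$ in $B$; we have $c \in L_S \subseteq S$. By Corollary~\ref{cor:L2edge}, the unique edge incident to $c$ in $B$ is a 2-edge, and this edge is precisely the edge $cv$ from $c$ to its parent. But a 2-edge must connect a vertex of $S$ to a tree vertex in $V_\calT$, whereas here both $c$ and $v$ lie in $S$. This is the desired contradiction, so every leaf of $B_1$ must be a tree vertex.

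I do not anticipate any real obstacle here; the statement is essentially a bookkeeping consequence of Corollary~\ref{cor:L2edge} plus the observation that the interior-removal operation $B \mapsto B_1$ cannot expose a non-tree vertex as a new leaf unless that vertex had only leaf-children in $L_S$, which is ruled out by the ``2-edge between two $S$-vertices'' impossibility. The only minor thing to be careful about is the root case, which is handled immediately by the skeleton being rooted at a vertex of $V_\calT$.
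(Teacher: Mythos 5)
Your proof is correct and follows essentially the same route as the paper's: assume a leaf $v$ of $B_1$ lies in $S$, observe that $v\notin L_S$ so $v$ is not a leaf of $B$, deduce that some $B$-neighbor of $v$ must lie in $L_S$, and contradict Corollary~\ref{cor:L2edge} since an edge between two vertices of $S$ cannot be a $2$-edge. Your treatment is slightly more careful than the paper's in checking the root case and in verifying that the parent of $v$ survives in $B_1$, but the argument is the same.
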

\begin{proof}
	Suppose for the sake of contradiction that there is a leaf $v$ of $B_{1}$ that is a non-tree vertex.
	Clearly, $v\notin L_S$ by the definition of $B_1$. 
	Hence, $v$ is not a leaf of $B$.
	Then, there must be a vertex $u$ in $L_S$ that has an edge to $v$ in $B$.
	Since both $u$ and $v$ are in $S$, the edge $uv$ is a $1$-edge.
	This is a contradiction to Corollary~\ref{cor:L2edge}.
\end{proof}
\begin{corollary}
	\label{cor:1edge-child}
	For each $1$-edge $\dir{uv}$ in $B_1$, either $u$ is a tree vertex, or a child $u'$ of $u$ in $B_1$ is a tree vertex with $u'u$ being a 2-edge.
\end{corollary}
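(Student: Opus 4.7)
The plan is to argue directly from Lemma~\ref{lem:vertex2edge} and the definitions of the arborescence structure. Fix a $1$-edge $\dir{uv}$ in $B_1$ and assume $u$ is not a tree vertex (otherwise we are done); so $u\in S$. I want to produce a child $u'$ of $u$ in $B_1$ such that $u'u$ is a $2$-edge and $u'$ is a tree vertex.

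First I would invoke Lemma~\ref{lem:vertex2edge} to obtain a $2$-edge incident on $u$ in $B$. Since $B$ is an in-arborescence and $u$ is a non-root vertex with its unique outgoing edge being $\dir{uv}$, which by hypothesis is a $1$-edge, the guaranteed $2$-edge cannot be $\dir{uv}$. Hence it must be an incoming edge $\dir{u'u}$ for some child $u'$ of $u$ in $B$. By definition, a $2$-edge of $H$ has one endpoint in $S$ and the other in $V_\calT$; since $u\in S$, the other endpoint $u'$ is a tree vertex.

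Finally I need to upgrade ``child in $B$'' to ``child in $B_1$''. By the definition of $B_1$, the only vertices of $B$ removed when forming $B_1$ are those in $L_S\subseteq S$. Since $u'\in V_\calT$, we have $u'\notin L_S$, so $u'$ remains in $B_1$, and the edge $\dir{u'u}$ is present in $B_1$. Thus $u'$ is a child of $u$ in $B_1$ that is a tree vertex with $u'u$ a $2$-edge, which is exactly what the corollary asserts. There is no real obstacle here; the content is just the observation that the $2$-edge promised by Lemma~\ref{lem:vertex2edge} cannot be the outgoing $1$-edge $\dir{uv}$ and that tree vertices are never pruned when passing from $B$ to $B_1$.
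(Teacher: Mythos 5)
Your proof is correct and follows essentially the same route as the paper's: apply Lemma~\ref{lem:vertex2edge} to $u$, observe the guaranteed $2$-edge must be incoming since the outgoing edge $\dir{uv}$ is a $1$-edge, note that one endpoint of a $2$-edge is always a tree vertex, and conclude that this tree vertex survives in $B_1$. The only difference is that you spell out the last step (tree vertices are never in $L_S$) slightly more explicitly than the paper does.
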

\begin{proof}
Suppose that $u$ is not a tree vertex.
Then there is an incoming 2-edge on $u$ in $B$, because its outgoing edge is a $1$-edge and there has to be at least one 2-edge incident on it due to Lemma~\ref{lem:vertex2edge}.
Let the other endpoint of this edge be $u'$.
Since at least one of the endpoints of a 2-edge has to be a tree vertex, $u'$ is a tree vertex.
Since $u'$ is a tree vertex, it has to be in $B_1$.
\end{proof}
\noindent
{\bf Coloring procedure:} We now give a rainbow coloring of $G$ using $f+t+2\le 2f+2$ colors (recall that $t=|\calT|$ is the number of connected components in $\calF$ and $f=\f(G)=|V(F)|$). 
The coloring procedure is illustrated in Figure~\ref{fig:take2-coloring}.
Since $\calF$ is a forest with $t$ connected components, the number of edges in $\calF$ is $f-t$.
Color all the edges in $\calF$ with distinct colors $1,2,\ldots, f-t$. 
We call colors $g_1:=f+t+1$ and $g_2:=f+t+2$, the {\bf global surplus colors}. 
We use the global surplus colors to color the representatives of those edges of $B$ that are incident on the vertices in $L_S$. 
Each vertex in $L_S$ has only one edge incident on it in $B$, and this edge is a 2-edge due to Corollary~\ref{cor:L2edge}.
Color one of the representatives of this 2-edge with $g_1$ and the other with $g_2$.
Now there are $2t$ unused colors, which are the colors $f-t+1$ to $f+t$. 
We allocate each tree in $\calT$ (i.e., each connected component of $\calF$), two of these colors as its {\bf surplus colors}.
That is, the $i\nth$ tree $T$ in $\calT$ is allocated colors
$f-t+2(i-1)+1$ and $f-t+2(i-1)+2$ as its surplus colors.
We denote the two surplus colors of $T$ by $s_1(T)$ and $s_2(T)$.

Finally, we give a coloring of the edges of $B_1$ and then extend the coloring to their representatives in $G$ (see Figure~\ref{fig:skeleton-b1-coloring-take2}).
Whenever we color an edge $e$ in $B_1$ with color $c$, we also color the representatives of $e$ in $G$ also with $c$, though we may not mention this explicitly.
Pick each $T\in \calT$ such that $x_T$ is not the root of $B$ and do the following:
Let $\dir{x_Tv}$ be the outgoing edge of vertex $x_T$ in $B$, i.e., $v=\pa(x_T)$; 
let $w=\pa(v)$ (if parent of $v$ exists, i.e., if $v$ is not the root) and
let $z=\pa(w)$ (if $w$ exists and parent of $w$ exists, i.e., if $w$ is not the root).
	
\noindent {\bf Case 1.}\ $\dir{x_Tv}$ is a 2-edge.	

We fix an arbitrary edge in the foot-path of $x_Tv$ as the
{\bf color-giving edge} of $T$.
If $x_Tv$ is uncolored, then we color it with the same color as that of the color-giving edge of $T$. 
 Note that the color-giving edge is already colored as we have already colored all the edges inside $\calF$.
 If $w$ exists and $\dir{vw}$ is uncolored, color $\dir{vw}$ with $s_1(T)$, the first surplus color of $T$.
 If $z$ exists and $\dir{wz}$ is uncolored, color $\dir{wz}$ with $s_2(T)$, the second surplus color of $T$.

 \noindent {\bf Case 2.}\ $\dir{x_Tv}$ is a $1$-edge.	

If $\dir{x_Tv}$ is uncolored, then color $\dir{x_Tv}$ with $s_1(T)$.
If $w$ exists and $\dir{vw}$ is uncolored, then color $\dir{vw}$ with $s_2(T)$.

We will prove in Lemma~\ref{lem:b1} that the above procedure in fact colors all the edges of $B_1$, and moreover does so with distinct colors.
We extend this coloring of the edges of $B_1$ to their representatives in $G$ as mentioned before: for each edge $e$ of $B_1$ color their representatives (both representatives in case of 2-edges and the only representative in case of $1$-edge) with the color of $e$.

At this point, there might be some edges in $G$ that are still uncolored. 
We call them \emph{irrelevant edges}.
Indeed, when we exhibit rainbow paths between pairs of vertices later, these edges will not be used.
The edges of $G$ that are not irrelevant are called \emph{relevant edges}.
To complete the edge-coloring of $G$, color all the irrelevant edges with an arbitrary color from $[f+t+2]$.

\begin{lemma}
	\label{lem:b1}
 All the edges of $B_1$ are colored and they are colored with distinct colors.	
\end{lemma}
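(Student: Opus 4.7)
The plan is to establish two assertions separately: (i) every edge of $B_1$ is colored, and (ii) no two edges of $B_1$ receive the same color. For each non-root tree $T$, the procedure touches only edges along the outward chain $\dir{x_T v}, \dir{vw}, \dir{wz}$ and uses only colors from the palette $\{c_T, s_1(T), s_2(T)\}$, where $c_T$ denotes the color of $T$'s color-giving edge.

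For (i), I would fix an edge $\dir{uv} \in E(B_1)$ and case-split on the type of $u$. If $u = x_T$ is a tree vertex, then $T$ is not the root of $B$ (otherwise $u$ would have no outgoing edge) and step~1 of the procedure for $T$ colors $\dir{uv}$. Otherwise $u \in S$, and Lemma~\ref{lem:vertex2edge} guarantees a 2-edge on $u$ in $B$ whose non-$u$ endpoint is a tree vertex. If that 2-edge is incoming from a tree-vertex child $x_T$ of $u$, then Case~1 of $T$'s procedure applies and step~2 colors $\dir{uv}$. Otherwise the only 2-edge on $u$ is the outgoing edge $\dir{uv}$ itself, and every incoming edge at $u$ is a 1-edge; by Corollary~\ref{cor:leafB1}, $u$ is not a leaf of $B_1$ and so has a child in $B_1$. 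If some such child is a tree vertex $x_T$, then Case~2 of $T$'s procedure colors $\dir{uv}$ in step~2. Otherwise every child $u'$ of $u$ in $B_1$ lies in $S$; pick any one, note $u' \notin L_S$ so $u'$ has a child in $B$, and apply Lemma~\ref{lem:vertex2edge} to $u'$: its 2-edge cannot be $\dir{u'u}$ (both endpoints in $S$), so it is incoming from some tree-vertex grandchild $x_{T''}$ of $u$. Case~1 then fires for $T''$ and step~3 colors $\dir{uv}$.

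For (ii), I would observe that each tree $T$ writes each of $c_T, s_1(T), s_2(T)$ on at most one edge, and that the palettes of different trees are pairwise disjoint: the surplus pairs $\{s_1(T), s_2(T)\}$ are allocated from disjoint pairs in $\{f-t+1,\dots,f+t\}$ by construction, the color-giving colors $c_T$ sit in $\{1,\dots,f-t\}$ and are distinct across trees because the color-giving edges belong to distinct components of $\calF$ whose edges received distinct colors, and the two ranges are themselves disjoint. When several tree procedures target the same edge, the ``if uncolored'' guard ensures only one of them actually writes, which together with the palette disjointness rules out any color collision.

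The main technical obstacle is the final sub-case of (i), where $u \in S$, its unique 2-edge is the outgoing one, and no child of $u$ in $B_1$ is a tree vertex: reaching $\dir{uv}$ requires descending two levels in $B_1$ and reapplying Lemma~\ref{lem:vertex2edge}, crucially using the fact that a 2-edge cannot have both endpoints in $S$. A secondary subtlety is the overlap between different tree procedures that may try to paint the same edge; this is resolved transparently by the first-writer-wins rule enforced by the ``if uncolored'' guard, without any effect on distinctness.
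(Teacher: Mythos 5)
Your proposal is correct and follows essentially the same route as the paper: the distinctness claim is immediate from the disjoint per-tree palettes, and the coverage claim reduces to checking that every edge of $B_1$ lies on the three-edge (resp.\ two-edge) chain of some tree vertex, with the hardest case (a non-tree vertex $u$ whose children are all non-tree) resolved exactly as in the paper by descending to a grandchild tree vertex via Lemma~\ref{lem:vertex2edge} (the paper packages this step as Corollary~\ref{cor:1edge-child} and phrases the whole argument as a contradiction, but the substance is identical).
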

\begin{proof}
It is easy to see that the colors are distinct as each color is used only once while coloring $B_1$.	
Therefore, it only remains to prove that all edges are colored.
Assume for the sake of contradiction that there is an uncolored edge $\dir{uv}$ of $B_1$.
Observe that in the coloring procedure, for each $x_T\in V_{\calT}$, we have colored its outgoing edge and the outgoing edge of its parent (if such an edge exists) irrespective of whether Case 1 or 2 was applied.
Hence, neither $u$ or any child of $u$ is a tree vertex.
Since $u\notin V_{\calT}$, it holds that $u$ is not a leaf of $B_1$ by Corollary~\ref{cor:leafB1}, and consequently $u$ has at least one child in $B_1$. 
Let $x$ be one such child.
As discussed above, $x$ is not a tree vertex.
Now, the edge $\dir{xu}$ is a $1$-edge as both endpoints are in $S$.
Then, by Corollary~\ref{cor:1edge-child}, there is a tree vertex $x_T$ that is a child of $x$ with $\dir{x_Tx}$ being a 2-edge. 
Then, Case 1 of the coloring procedure was applied on $T$, during which edge $\dir{uv}$ would have been colored.
\end{proof}
We are now ready to prove that the coloring of the edges of $G$ produced above is indeed a rainbow coloring.
We will prove that there is a rainbow path between every pair of vertices in $G$.
For this, we first prove in Lemma~\ref{lem:b1pathtake2} that there is a rainbow path between any pair of vertices in $V(G)\setminus L_S$ and then in Lemma~\ref{lem:gpathtake2}, we show that there is a rainbow path between any pair of vertices in $V(G)$.
The following observation is helpful in proving these lemmas.

\begin{observation}
	\label{obs:edgeexclusion}
	Let $v_1$, $v_2$, and $v_3$ be three vertices in any tree $T$, and let $e$ be an edge in
	$T_{v_2v_3}$. Then either $T_{v_1v_2}$ or $T_{v_1v_3}$ does not contain the edge $e$.
\end{observation}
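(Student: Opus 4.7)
The plan is to use the standard fact that removing any edge from a tree disconnects it into exactly two components, and then do a short case analysis on which component contains $v_1$.

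More concretely, first I would let $T_1$ and $T_2$ denote the two connected components of $T \setminus e$, i.e., of the forest obtained by deleting the edge $e$ from $T$. Since $e$ lies on the path $T_{v_2 v_3}$, the vertices $v_2$ and $v_3$ must lie in different components; without loss of generality, assume $v_2 \in V(T_1)$ and $v_3 \in V(T_2)$.

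Next I would split into two cases based on where $v_1$ lies. If $v_1 \in V(T_1)$, then $v_1$ and $v_2$ are in the same component of $T \setminus e$, so there exists a path between them in $T \setminus e$; since $T$ is a tree, the unique path $T_{v_1 v_2}$ must coincide with this path and therefore cannot contain $e$. Symmetrically, if $v_1 \in V(T_2)$, then $T_{v_1 v_3}$ avoids $e$. In either case, at least one of $T_{v_1 v_2}$ and $T_{v_1 v_3}$ does not use $e$, which is exactly the claim.

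There is no real obstacle here; this is a one-line consequence of the tree structure. The only thing worth being careful about is invoking uniqueness of paths in a tree to justify that the path found inside $T \setminus e$ is the path $T_{v_1 v_i}$, rather than some other walk. Everything else is immediate.
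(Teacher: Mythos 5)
Your proof is correct; the paper states this as an Observation without any proof, treating it as immediate, and your argument via the two components of $T \setminus e$ (with $v_2$ and $v_3$ necessarily separated, and $v_1$ sharing a component with one of them) is exactly the standard justification one would supply. The care you take in invoking uniqueness of tree paths to identify the path inside $T\setminus e$ with $T_{v_1v_i}$ is appropriate and complete.
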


For a vertex $v$ in $V(G)$, if $v\in S$ define $h(v):=v$, otherwise (i.e., if $v\in F$) define $h(v):=x_T$, where $T\in \calT$ is the tree containing $v$.

\begin{lemma}
	\label{lem:b1pathtake2}
	For any pair of vertices $v_1,v_2\in V(G)\setminus L_S$, there is a rainbow path between $v_1$ and $v_2$ in $G$ that uses only colors in $[f+t]$. 
\end{lemma}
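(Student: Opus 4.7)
The plan is to lift the unique path $P$ in the underlying tree $\und{B_1}$ from $h(v_1)$ to $h(v_2)$ to a rainbow $v_1$--$v_2$ path $P'$ in $G$ using only colors in $[f+t]$. Write $P = q_0 q_1 \cdots q_\ell$ with $q_0 = h(v_1)$ and $q_\ell = h(v_2)$; because $V_{\calT}$ is independent in $H$, every edge of $P$ is incident to some non-tree vertex. For each edge $q_i q_{i+1}$ of $P$, I include in $P'$ one of its representatives in $G$ (the unique one when the edge is a $1$-edge, one of the two representatives $u f_1, u f_2$ when the edge is a $2$-edge $x_T u$ with feet $f_1, f_2$). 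For each tree vertex $q_i = x_T$ on $P$, let $a_T \in V(T)$ denote the ``entering'' vertex --- equal to $v_1$ if $q_0 = x_T$ and $v_1 \in V(T)$, and otherwise equal to the foot of the representative chosen for $q_{i-1} q_i$ --- and let $b_T \in V(T)$ denote the analogous ``exiting'' vertex; splice the unique tree-path $T_{a_T b_T}$ into $P'$ at $x_T$. Concatenating the pieces gives a $v_1$--$v_2$ walk in $G$, and since the trees of $\calF$ are vertex-disjoint, the non-tree vertices of $P$ are distinct, and every tree-path is simple, this walk is in fact a simple path.

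Next I argue that the foot choices can be made so that $P'$ is rainbow. By Lemma~\ref{lem:b1}, distinct edges of $B_1$ receive distinct colors, so representatives of distinct $B_1$-edges in $P'$ carry distinct colors. Tree-edges of $\calF$ are colored pairwise distinctly in $[1, f-t]$ and each spliced tree-path is simple, so the tree-edges of $P'$ also have pairwise distinct colors. The surplus colors lie in $[f-t+1, f+t]$, disjoint from the tree-edge color range, so they cannot clash with any tree-edge. The only color that could be shared between a tree-edge and a $B_1$-representative arises from a tree $T$ processed in Case~1 whose outgoing $2$-edge was uncolored at that moment: it is then assigned the color of $T$'s color-giving edge $e^* \in T$, whence both representatives of this $2$-edge inherit that color. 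Hence a clash in $P'$ can arise only if the outgoing $2$-edge of such a $T$ lies on $P$ and $T_{a_T b_T}$ traverses $e^*$. But $e^*$ lies on the foot-path $T_{f_1 f_2}$, so Observation~\ref{obs:edgeexclusion} applied to the triple $a_T, f_1, f_2$ yields an $i \in \{1, 2\}$ with $e^* \notin T_{a_T f_i}$; set $b_T := f_i$ and use the corresponding representative. The endpoint case ($x_T$ equals $q_0$ or $q_\ell$ with $v_j \in V(T)$) is handled identically with $a_T := v_j$. These choices are local to each tree, so they can be made simultaneously.

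Finally, every edge of $P'$ is either a tree-edge of $\calF$ (color in $[1, f-t]$) or a representative of a $B_1$-edge (whose color is either a color-giving edge color in $[1, f-t]$ or a surplus color in $[f-t+1, f+t]$), so $P'$ uses only colors in $[f+t]$; the global surplus colors $g_1, g_2$ appear exclusively on representatives of $B$-edges incident to $L_S$, which are not edges of $B_1 = B[V(B) \setminus L_S]$, and irrelevant edges play no role in $P'$. The main obstacle is the foot-choice argument of the second paragraph: one must be certain that for every tree $T$ on $P$ whose outgoing $2$-edge is traversed, the routing through $T$ can be steered to avoid $e^*$, regardless of how the entering foot was fixed. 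Observation~\ref{obs:edgeexclusion} is precisely the tool that delivers this, and because its conclusion depends only on the triple $(a_T, f_1, f_2)$ local to $T$, the per-tree decisions are independent and hence compatible.
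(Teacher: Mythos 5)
Your proposal is correct and follows essentially the same route as the paper's proof: lift the unique $h(v_1)$–$h(v_2)$ path in $\und{B_1}$ by picking one representative per skeleton edge, splice in tree paths, and invoke Observation~\ref{obs:edgeexclusion} to steer around the color-giving edge whenever the outgoing $2$-edge of a tree is used. The only cosmetic difference is that the paper splits the path at the least common ancestor before lifting, whereas you make the per-tree foot choices locally; the color-accounting argument is the same.
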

\begin{proof}
Consider a pair of vertices $v_1,v_2\in V(G)\setminus L_S$.
We will construct a rainbow path $P$ from $v_1$ to $v_2$ using only the edges of $G$ that have colors in $[f+t]$.
From Lemma~\ref{lem:b1}, we know that there is a rainbow path between $v_1':=h(v_1)$ and $v_2':=h(v_2)$ in $\und{B_1}$ that uses only the colors in $[f+t]$. 
Let this path be $P'$. 
We will use the path $P'$ as a guide to construct our required rainbow path $P$ in $G$.
First, break $P'$ into two paths $P'_1$ and $P'_2$ as follows.
Let $v_3'$ be the least common ancestor of $v_1'$ and $v_2'$ in $B_1$, let $P_1'$ be the path from $v_1'$ to $v_3'$ and let $P_2'$ be the path from $v_2'$ to $v_3'$. 

We will first construct a path $P_1$ in $G$ starting from $v_1$ using the path $P_1'$ as a guide:
we start with $P_1$ being just the vertex $v_1$.
We maintain a current vertex in $G$, denoted by $v_G$, which is initialized to $v_1$.
Let $e$ be the outgoing edge from $h(v_G)$ in $B_1$. 
The edge $e$ is the next edge in $P_1'$ to be processed.
As long as $h(v_G)\neq v_3'$, repeat the following step.

\noindent {\bf Case 1.}\ $h(v_G)\in S$. 

Append $(e)_1$ to $P_1$.
Also, update 
$v_G$ to be the other endpoint of $(e)_1$.

\noindent {\bf Case 2.}\ $h(v_G)\in V_{\calT}$.

Let $x_T=h(v_G)$. We then branch into two sub-cases.

\noindent {\bf Case 2.1}\ $e$ is a $1$-edge.

Let $z$ be the endpoint of $(e)_1$ in $T$.
Append the path $T_{v_Gz}$ to $P_1$.
Then append $(e)_1$ to $P_1$.
Update $v_G$ to be the endpoint of $(e)_1$ outside $T$.

\noindent {\bf Case 2.2}\ $e$ is a 2-edge.

Let $z_1$ and $z_2$ be the foots of $e$ in $T$.
Observe that $v_G\in V(T)$ as $h(v_G)= x_T$.
By Observation~\ref{obs:edgeexclusion}, there exists a $z\in \{z_1,z_2\}$ such that there is a path from $v_G$ to $z$ that excludes the color-giving edge of $T$.
Append this path to $P_1$.
Now, append to $P_1$ the representative of $e$ having one of the endpoints as $z$. 
Update $v_G$ to be the endpoint outside $T$ of this representative.

Similarly, we also construct $P_2$ starting from $v_2$ using $P_2'$ as a guide.
If $v_3'\in S$, then we take $P:=P_1P_2$.
Otherwise, if $v_3'\in V_{\calT}$, we define $P$ as follows. 
Let $T=f_{\calT}(v_3')$. 
There are vertices $w_1, w_2 \in V(T)$ that are the endpoints of $P_1$ and $P_2$, respectively. 
Let $P_3$ be the path $T_{w_1w_2}$. 
Take $P:=P_1P_3P_2$.

By construction, it is clear that $P$ is indeed a path from $v_1$ to $v_2$.
It only remains to prove that $P$ is a rainbow path whose colors are a subset of $[f+t]$.
Observe that each edge that we have added to $P$ is either a representative of an edge in $P'$ (call such edges $E_{PP'}$) or an edge in $\calF$ (call such edges $E_{P\calF}$). Recall that the representatives of an edge $e$ in $B_1$ were colored with the same color as that of $e$, and that each edge of $P'$ is colored with a color from $[f+t]$.
Thus, each edge of $E_{PP'}$ is a relevant edge that having a color in $[f+t]$.
Also recall that each edge in $\calF$ was colored with a color from $[f-t]$.
Thus, each edge of $E_{P\calF}$ is a relevant edge having a color in $[f-t]$.
This means that all edges in $P$ are relevant edges with colors in $[f+t]$.
In our coloring of the edges of $G$, each of the colors $1,2,\ldots, f-t$  except the colors of the color-giving edges have been used only for one relevant edge. 
Also, the color of the color-giving edge of a tree $T$ can possibly be repeated only on the representatives of the outgoing edge of $x_T$ in $B$. 
But when constructing $P$, we have taken care not to include the color-giving edge of $T$ in $P$ if $P$ contains a representative of the outgoing edge in of $x_T$. 
Also, we have included at most one of the representatives of the outgoing edge of $x_T$ in $P$.
Thus, the colors $1,2,\ldots, f-t$ appear at most once in $P$.
Each of the colors from $f-t+1$ to $f+t$ (surplus colors of the trees) appear on at most two relevant edges. 
Further, if they appear on two edges, then those two edges are the two representatives of some 2-edge of $B$.
Since we constructed $P$ in such a way that at most one representative of any edge in $B$ is included in $P$, each surplus color appears at most once in $P$. Thus $P$ is a rainbow path, concluding the proof.
\end{proof}

\begin{lemma}
	\label{lem:gpathtake2}
	For any pair of vertices $v_1,v_2\in V(G)$, there is a rainbow path between $v_1$ and $v_2$ in $G$. 
\end{lemma}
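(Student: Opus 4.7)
The plan is to reduce Lemma~\ref{lem:gpathtake2} to Lemma~\ref{lem:b1pathtake2} by case-splitting on how many of $v_1, v_2$ lie in $L_S$, using the two global surplus colors $g_1, g_2$ as bridges from $L_S$-vertices into the inner skeleton. If neither endpoint lies in $L_S$, Lemma~\ref{lem:b1pathtake2} already supplies a rainbow path (using only colors in $[f+t]$).

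Suppose next that exactly one endpoint, say $v_1$, lies in $L_S$. By Corollary~\ref{cor:L2edge}, the unique edge of $B$ incident on $v_1$ is a 2-edge $v_1 x_T$, whose two representatives in $G$ are colored $g_1$ and $g_2$. Let $y_1 \in V(T)$ be the endpoint in $T$ of the representative colored $g_1$; since $y_1 \in F$, we have $y_1 \in V(G) \setminus L_S$. Apply Lemma~\ref{lem:b1pathtake2} to obtain a rainbow path $P^*$ from $y_1$ to $v_2$ using only colors in $[f+t]$, and prepend the edge $v_1 y_1$ (colored $g_1 = f+t+1$). Since $g_1 \notin [f+t]$, the concatenated walk is rainbow.

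In the remaining case both $v_1, v_2 \in L_S$, with corresponding 2-edges $v_1 x_{T_1}$ and $v_2 x_{T_2}$ in $B$ whose representatives are colored by $\{g_1, g_2\}$. Choose $y_1 \in V(T_1)$ as the endpoint of the $g_1$-colored representative of $v_1 x_{T_1}$, and $y_2 \in V(T_2)$ as the endpoint of the $g_2$-colored representative of $v_2 x_{T_2}$. Both $y_1, y_2$ lie outside $L_S$, so Lemma~\ref{lem:b1pathtake2} supplies a rainbow path $P^*$ from $y_1$ to $y_2$ with colors in $[f+t]$. Concatenating $v_1 y_1$, $P^*$, and $y_2 v_2$ gives a rainbow walk from $v_1$ to $v_2$, since the three color palettes $\{g_1\}$, a subset of $[f+t]$, and $\{g_2\}$ are pairwise disjoint. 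This even works gracefully if $T_1 = T_2$ or if $y_1 = y_2$ (in which case $P^*$ degenerates to a single vertex).

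The only nonroutine check is that the concatenated walks are in fact simple paths rather than walks that might revisit $v_1$ or $v_2$ internally. Inspecting the construction in the proof of Lemma~\ref{lem:b1pathtake2}, the running vertex $v_G$ is at all times either in $F$ or in $V(B_1) \cap S$, so $v_G \in V(G) \setminus L_S$ throughout; hence $P^*$ never touches $L_S$ internally, and in particular cannot coincide with $v_1$ or $v_2$. This is really the only potential obstacle, and it is handled by a direct reading of the earlier construction; the color disjointness is already baked into the design of the palette in the coloring procedure.
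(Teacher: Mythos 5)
Your proposal is correct and follows essentially the same route as the paper: split on which of $v_1,v_2$ lie in $L_S$, use the $g_1$- and $g_2$-colored representatives of the unique incident $2$-edges as bridges into $V(G)\setminus L_S$, and invoke Lemma~\ref{lem:b1pathtake2} for the middle segment, with rainbowness following from the disjointness of $\{g_1\},\{g_2\}$ and $[f+t]$. The extra check that the concatenation is a simple path (not just a walk) is a small bonus of care beyond what the paper writes, but does not change the argument.
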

\begin{proof}
Consider any pair of vertices $v_1,v_2\in V(G)$.
If both $v_1,v_2\in V(G)\setminus L_S$, then we are done by Lemma~\ref{lem:b1pathtake2}.
So assume without loss of generality that $v_1\in L_S$. 
Recall that if $v_1\in L_S$, then there is $2$-edge $e$ in $B$ incident on $v_1$ and that the representatives of $e$ are colored with $g_1$ and $g_2$.
Let $e_1$ be the representative of $e$ that is colored $g_1$, and
let $a$ be the other endpoint of $e_1$.
If $v_2\in L_S$, let $e_2$ be the edge incident on $v_2$ that is colored $g_2$, and
let $b$ be the other endpoint of $e_2$.
If $v_2\notin L_S$, let $b=v_2$. 
We know that there is a rainbow path $P_{ab}$ from $a$ to $b$ that uses only colors in $[f+t]$ due to Lemma~\ref{lem:b1pathtake2}.
We define the path
$P:=v_1aP_{ab}bv_2$.
Since the edge $v_1a$ is colored with $g_1=f+t+1$, the edge $bv_2$ is colored with $g_2=f+t+2$, and path $P_{ab}$ uses only colors in $[f+t]$, we have that
the path $P$ is indeed a rainbow path between $v_1$ and $v_2$.
\end{proof}
The following result is now immediate from the constructed coloring and Lemma~\ref{lem:gpathtake2}.
\begin{theorem}
	Any connected graph $G$ has $\rc(G) \leq 2\f(G) + 2$.
\end{theorem}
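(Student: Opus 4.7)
The plan is to combine the explicit coloring constructed throughout this subsection with the path-existence guarantee in Lemma~\ref{lem:gpathtake2}. The substantive work has already been done in the preceding lemmas, so the theorem reduces to a color count plus an application of a trivial inequality.

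First I would tally the total number of colors used by the coloring procedure. The induced forest $\calF$ has $f$ vertices distributed among $t$ connected components, so it contains exactly $f - t$ edges, each receiving a distinct color from $\{1, \ldots, f - t\}$. The procedure then allocates two \emph{surplus} colors per tree, accounting for $2t$ additional colors (colors $f-t+1$ through $f+t$), and introduces two \emph{global} surplus colors $g_1 = f+t+1$ and $g_2 = f+t+2$ for the representatives of edges of $B$ incident on the $L_S$-leaves. Altogether the palette has exactly $(f - t) + 2t + 2 = f + t + 2$ colors.

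Next, since every connected component of $\calF$ contributes at least one vertex to $F$, we have $t \le f$, and therefore $f + t + 2 \le 2f + 2 = 2\f(G) + 2$. Finally, Lemma~\ref{lem:gpathtake2} asserts that for every pair of vertices $v_1, v_2 \in V(G)$ the constructed coloring admits a rainbow $v_1$--$v_2$ path, so the coloring is a valid rainbow coloring of $G$. Combining these two observations yields $\rc(G) \le f + t + 2 \le 2\f(G) + 2$.

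I do not expect any genuine obstacle at this final step: the real difficulties — picking a skeleton $B$ that maximizes the number of $2$-edges so as to guarantee Lemma~\ref{lem:vertex2edge}, ensuring that the case analysis in Cases~1 and~2 of the coloring procedure colors every edge of $B_1$ (Lemma~\ref{lem:b1}), and routing paths through each contracted tree so as to avoid the color-giving edge whenever the corresponding $2$-edge representative is used (via Observation~\ref{obs:edgeexclusion}) — have all been absorbed into Lemmas~\ref{lem:b1}, \ref{lem:b1pathtake2}, and \ref{lem:gpathtake2}. What remains is the elementary color-counting argument above.
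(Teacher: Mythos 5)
Your proposal is correct and matches the paper's argument exactly: the paper also obtains this theorem as an immediate consequence of the $f+t+2 \le 2f+2$ color count for the constructed coloring together with Lemma~\ref{lem:gpathtake2}. Nothing further is needed.
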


\subsection{Take 3: \texorpdfstring{$\rc(G) \le \f(G)+2$}{}}
\label{sec:take3}
In this section, we prove our final bound, by further developing the ideas from the previous takes.
Recall that in Take~2, we gave two surplus colors to each tree. Here, we give only one surplus color to each tree and thereby reduce the number of colors used.
However, our analysis has to be tighter to make the proof work with only one surplus color per tree.
This makes the proof much more technical and lengthy.

Similar to Take 2, we fix a maximum induced forest $\calF$ of $G$, but with an additional property as follows.
Let $\calF$ be the maximum induced forest that has the smallest number of connected components (trees) out of all the maximum induced forests of $G$.
Now that $\calF$ is fixed, 
we define $\calT,F,S,H,V_{\calT},E_1,E_2,f_{\calT}, f,t$, tree vertices, non-tree vertices, $1$-edges, 2-edges, representatives, foots, foot-path and skeleton,
the same way as in Take~2. 
However, the selection of the skeleton $B$ is done in a more involved way here.
Given a skeleton $B$ with root $r$, we define the {\bf level} of each node $v$, denoted by $\ell_B(v)$, as its distance (in terms of number of vertices) to $r$ in $B$.
Note that $\ell_B(r) = 1$ per this definition.
For a skeleton $B$, we define its {\bf configuration vector} as the following vector:
\begin{align*}
	\langle\; |E_2( B)|, \Sigma_{v:\ell_B(v)=1} \dgr_B(v), \Sigma_{v:\ell_B(v)=2} \dgr_B(v),\ldots, \Sigma_{v:\ell_B(v)=|V|} \dgr_B(v) \;\rangle,
\end{align*}
where $\dgr_B(v)$ is the total degree (sum of in-degree and out-degree) of $v$ in $B$.
We now fix a skeleton $B$ such that it has the lexicographically highest configuration vector out of all possible skeletons.
We also define $L_S$ and the inner skeleton $B_1$ the same way as in Take 2, i.e., $L_S$ is the set of vertices in $S$ that are leaves of $B$, and $B_1=B[V(B)\setminus L_S]$.
Let $\und{B_1}$ be the underlying undirected tree of $B_1$.
We note that since the first element of the configuration vector is the number of 2-edges, $B$ would have been a valid skeleton in Take 2 as well.
In particular, Lemma~\ref{lem:vertex2edge} and Corollaries~\ref{cor:L2edge},~\ref{cor:leafB1} and~\ref{cor:1edge-child} from Take~2 hold for $B$.

We define a mapping $h$ from $G$ to $H$ as follows.
For a vertex $v$ in $V(G)$, if $v\in S$ then define $h(v):=v$, otherwise (i.e., if $v\in F$) define $h(v):=x_T$, where $T\in \calT$ is the tree containing $v$.
For a non-tree edge $e=uv$ in $G$, we define $h(e)$ to be the edge $h(u)h(v)$. 
For a vertex subset $U$ of $V(G)$, we define $h(U)$ to be $\bigcup_{a\in U}h(a)$. 
For an edge subset $E'$ of $E(G)$, we define $h(E')$ to be $\left\{ h(e): e\in E'\text{ and } e \text{ is a non-tree edge} \right\}$. 
For a subgraph $G'$ of $G$, we define $h(G')$ as the subgraph of $H$ with vertex set $h(V(G'))$ and edge set $h(E(G'))$.

Let the palette of colors be $\left\{ 1,2,\ldots, f+2 \right\}$.
We call colors $f+1$ and $f+2$ the {\bf global surplus colors},
and denote them by $g_1$ and $g_2$.
We reserve $g_1$ and $g_2$ to color the edges incident on $L_S$.
We will first give a coloring of some edges of $G$ using colors $\left\{ 1,2,\ldots,f \right\}$ such that there is a rainbow path between every pair of vertices in $V(G)\setminus L_S$. 
Then we will extend the coloring to $L_S$ using the global surplus colors.
We give our coloring procedure as a list of {\bf coloring rules}. 

For $a,b\in V(G)\setminus L_S$, let $Q_{ab}$ denote the unique path in the inner skeleton $B_1$ between $h(a)$ and $h(b)$. 
For each such pair of vertices $(a,b)$, we will maintain a subgraph $P_{ab}$ of $G$. 
Each $P_{ab}$ is initialized to $\emptyset$.
After the application of each coloring rule, we will apply a {\bf path rule} for each pair $(a,b)$, which (possibly) adds some newly colored edges to $P_{ab}$. 
We say that an edge in $B_1$ is colored if its representatives in $G$ are colored (we will make sure that for a 2-edge, either both representatives are colored or both are uncolored at any point of time). 
	Whenever an edge in $B_1$ gets colored by a coloring rule and if it is in $Q_{ab}$, we make sure that we add exactly one of its representatives to $P_{ab}$ in the subsequent path rule.
	Whenever it happens during a path rule that two edges $u_1v_1$ and $u_2v_2$ are in $P_{ab}$ such that both $v_1$ and $u_2$ are in some $T\in \calT$, but $u_1$ and $v_2$ are not in $T$, then we add the path $T_{v_2u_1}$ to $P_{ab}$ (if it is not already included).
	Similarly, if it happens that there is an edge $uv$ in $P_{ab}$ such that $v,a\in V(T)$  ($v,b \in V(T)$ resp.) but $u\notin V(T)$, we add the path $T_{va}$ ($T_{vb}$ resp.) to $P_{ab}$.
	Also, if both $a$ and $b$ are in the same tree $T$, then we add the path $T_{ab}$ to $P_{ab}$ (during Path Rule~\ref{prul:forest} below).
	Thus, when all the coloring rules and path rules have been applied, we will have that for all $a,b\in V(G)\setminus L_S$, it holds that $P_{ab}$ is a path between $a$ and $b$.
We will prove that $P_{ab}$ is also a rainbow path.
For this, we will maintain the following invariant. 
\begin{invariant}
	\label{inv:part-rainbow}	
	For each pair $a,b\in V(G)\setminus L_S$, no two edges in $P_{ab}$ have the same color. 
\end{invariant}
We will prove that the invariant still holds after each path rule.
Since new edges are added to $P_{ab}$ only during path rules, this means that the invariant always holds.
We also maintain the following three auxiliary invariants. But they are rather straightforward to check from the coloring and path rules and hence we will not explicitly prove them.
\begin{invariant}
	\label{inv:2edgerepcol}
For any 2-edge in $B$, either both representatives of it are colored or both are uncolored. 
\end{invariant}
A vertex in $B_1$ is said to be {\bf completed} if all the incident edges on it in $B_1$ are colored and is said to be {\bf incomplete} otherwise.
\begin{invariant}
	\label{inv:internalcolor}
	For an incomplete tree-vertex $x_T$, the colors of $E(T)$ are disjoint from the colors of the rest of the graph $G$. 
\end{invariant}
\begin{invariant}
	\label{inv:internaledgespath}
	A nonempty subset of internal edges of a tree $T$ is contained in $P_{ab}$ only if a representative of each edge in $Q_{ab}$ that is incident on $x_T$ (there can be at most two of such edges as $Q_{ab}$ is a path) is in $P_{ab}$.
\end{invariant}

Now, we start with the coloring and path rules.
\begin{rul}
	\label{rul:forest}
	Color all the edges in $\calF$ with distinct colors $1,2,\ldots, f-t$.
\end{rul}
\begin{prul}
	\label{prul:forest}
	For each $a,b\in V(G)\setminus L_S$,
	if $a$ and $b$ are in the same tree $T$ for some $T\in \calT$, then add the path $T_{ab}$ to $P_{ab}$. 
\end{prul}
It is easy to see that Invariant 1 is satisfied after the above Path rule as the color of each edge is distinct so far.

For each tree $T\in \calT$, we designate a color in $[f-t+1, f]$ as its {\bf surplus color}, denoted by $s(T)$. 
More specifically, the surplus color of $i^{\text{th}}$ tree in $\calT$ is defined as the color $f-t+i$.
Also, the colors of the edges of $T$ (colored by Coloring Rule~\ref{rul:forest}) are called the {\bf internal colors} of $T$.
\begin{rul}
	\label{rul:1edge}
	For each $1$-edge $\dir{uv}$ in $B$: if $u$ is a tree vertex, then color $(uv)_1$ with $s(f_{\calT}(u))$; otherwise, i.e., if $u$ is not a tree vertex, by Corollary~\ref{cor:1edge-child}, 
	there is at least one child of $u$ in $B_1$ that is a tree vertex; pick one such tree vertex $x_T$ and color $(uv)_1$ with color $s(T)$.
\end{rul}

Note that after Coloring Rule~\ref{rul:1edge}, any tree vertex $x_T$ such that $T$ is just a single vertex, is completed.
\begin{prul}
	\label{prul:1edge}
	Do the following for each $a,b\in V(G)\setminus L_S$.
	For each $1$-edge $e$ in $Q_{ab}$, add $(e)_1$ to $P_{ab}$.
	Next, we add edges inside trees as follows.
	\begin{itemize}
	\item If for some tree $T$ it holds that $a \in V(T)$ and there is a 1-edge $ux_T$ in $Q_{ab}$, then add the path $T_{wa}$ to $P_{ab}$, where $w$ is the foot of the edge $ux_T$ in $T$.
	
	\item If for some tree $T$ it holds that $b \in V(T)$ and there is a 1-edge $ux_T$ in $Q_{ab}$, then add the path $T_{wb}$ to $P_{ab}$, where $w$ is the foot of the edge $ux_T$ in $T$.
	
	\item If for some tree $T$ there are two 1-edges $ux_T$ and $vx_T$ in $Q_{ab}$, add the path $T_{wz}$ to $P_{ab}$, where $w$ is the foot of the edge $ux_T$ in $T$ and $z$ the foot of the edge $vx_T$ in $T$.	
	\end{itemize}
\end{prul}
\begin{lemma}
	\label{lem:inv1-1edge}
	Invariant 1 is satisfied so far. Moreover,
	each color is used at most for one edge in $G$.
\end{lemma}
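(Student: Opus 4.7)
The plan is to establish the ``moreover'' clause first --- that each color in $[f]$ is applied to at most one edge of $G$ after Coloring Rules~\ref{rul:forest} and~\ref{rul:1edge} --- and then derive Invariant~\ref{inv:part-rainbow} as an immediate consequence, since if no two edges of $G$ share a color then certainly no two edges of any $P_{ab}$ can.

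The internal colors $1, 2, \ldots, f-t$ are placed by Coloring Rule~\ref{rul:forest} on distinct edges of $\calF$ by construction, so each is used exactly once. The main task is to verify that each surplus color $s(T)$, for $T \in \calT$, is placed on at most one edge by Coloring Rule~\ref{rul:1edge}. Note first that by Corollary~\ref{cor:L2edge}, every $1$-edge in $B$ actually lives in $B_1$, so Corollary~\ref{cor:1edge-child} applies to each such edge.

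I would then do a short case analysis on the unique outgoing edge $e_T$ of $x_T$ in $B$ (which exists whenever $x_T$ is not the root, since $B$ is an in-arborescence). If $e_T$ is a $1$-edge, the first branch of Coloring Rule~\ref{rul:1edge} colors $(e_T)_1$ with $s(T)$; the second branch cannot also use $s(T)$, since such a use would require $x_T$ to be picked as a tree-vertex child of some non-tree vertex $u$ with $\dir{x_T u}$ a $2$-edge, but $x_T$'s unique parent in $B_1$ is joined to it by $e_T$, which is a $1$-edge. If instead $e_T$ is a $2$-edge (or if $x_T$ is the root and $e_T$ does not exist), the first branch does not use $s(T)$; the second branch uses $s(T)$ only when some non-tree vertex $u$ picks $x_T$ as its tree-vertex child, and by uniqueness of $x_T$'s parent in $B_1$ this forces $u = \pa(x_T)$ and $\dir{uv}$ to be the unique outgoing edge of $u$, yielding at most one use. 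Hence $s(T)$ is used on at most one edge in either case.

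Combining the two rules, each color in $[f]$ is assigned to at most one edge of $G$ so far, which settles the ``moreover'' clause; Invariant~\ref{inv:part-rainbow} then follows trivially for every pair $a, b \in V(G) \setminus L_S$, since the edges added to $P_{ab}$ by Path Rules~\ref{prul:forest} and~\ref{prul:1edge} all lie in $G$ and therefore carry pairwise distinct colors. I do not foresee any real obstacle: the only subtle point is pinning down exactly when each branch of Coloring Rule~\ref{rul:1edge} can touch a given surplus color $s(T)$, which is handled by exploiting the arborescence structure of $B_1$ and the uniqueness of parents.
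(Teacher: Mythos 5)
Your proof is correct and takes essentially the same route as the paper's, which merely asserts that the Rule~\ref{rul:forest} colors are distinct, that the surplus colors are disjoint from them, and that ``it is not difficult to see'' that each surplus color is used for only one $1$-edge. Your case analysis on the outgoing edge of $x_T$ simply supplies the detail the paper omits for that last claim, including the necessary reading of Coloring Rule~\ref{rul:1edge} that the picked child is one guaranteed by Corollary~\ref{cor:1edge-child}, i.e., attached to $u$ by a $2$-edge.
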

\begin{proof}
	It is clear that during Coloring Rule~\ref{rul:forest}, all edges are colored distinct.
	In Coloring Rule~\ref{rul:1edge}, we use the surplus colors, which are disjoint from the colors used in Coloring rule \ref{rul:forest}.
	It is also not difficult to see that during Coloring Rule~\ref{rul:1edge}, the surplus color of a tree vertex is used for only one $1$-edge.
\end{proof}

For a colored edge $e\in E(G)$, we define $c(e)$ to be the color of $e$.
For a subgraph $G'$ of $G$, we define $c(G')$ to be the set of colors used in $E(G')$. 
We call the number of 2-edges of $B_1$ incident on a vertex,
the \emph{2-edge degree} of it.
For any two vertices $u$ and $v$, the connected component of $B_1\setminus u$ containing $v$ is denoted by $\st{u}{v}$.
Note that $\st{u}{v}$ is a subtree of $B_1$.
The closest (breaking ties arbitrarily) tree vertex to $v$ in $\st{u}{v}$ in $\und{B_1}$ is denoted by $\ct{u}{v}$. 
Note that at least one tree vertex exists in $\st{u}{v}$ because all leaves of $B_1$ are tree vertices by Corollary~\ref{cor:leafB1}. 
Also note that if $v$ is a tree vertex, then $\ct{u}{v}=v$.
\begin{rul}
	\label{rul:2edgedeg4}
	For each tree vertex $x_T$ with $2$-edge degree at least $4$ (see Figure~\ref{fig:2edgedeg4} for an illustration).
	Let $q$ be the $2$-edge degree of $x_T$.
	Let $w_0$, $w_1$, $w_2, \ldots, w_{q-1}$ be the other endpoints of the 2-edges incident on $x_T$.
For $i\in[0,q-1]$, let $x_{T_{i}}:=\ct{x_T}{w_i}$ and let $c_i:=s(T_i)$. 
For each $i\in[0,q-1]$, color the edge $(x_Tw_i)_1$ with $c_{((i+2)\bmod q)}$ and the edge $(x_Tw_i)_2$ with $c_{((i+3)\bmod q)}$.
\end{rul}

\begin{figure}[b]
	\centering
	\includegraphics[scale=1.25,keepaspectratio]{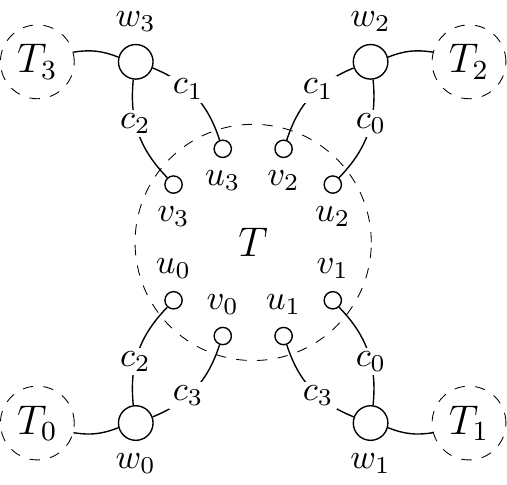}
	\caption{Illustration of Coloring Rule~\ref{rul:2edgedeg4} applied on a tree vertex $x_T$ with 2-edge degree $4$. Here, $c_i = s(T_i)$.}
	\label{fig:2edgedeg4}
\end{figure}

The following lemma follows from the way in which we have colored the edges incident on $x_T$ in Coloring Rule~\ref{rul:2edgedeg4}.
\begin{lemma}
	\label{lem:2edgedeg4paths}
	For each tree vertex $x_T$ on which Coloring Rule~\ref{rul:2edgedeg4} has been applied as above, for all distinct $i,j\in [0,q-1]$, there is a rainbow path from $w_i$ to $w_j$ in $G$ 
	that uses only the colors from 
	$\left(\left\{ c_0,c_1,\ldots\allowbreak, c_{q-1}\allowbreak \right\}\allowbreak\setminus\left\{ c_i,c_j \right\}\allowbreak\right)\allowbreak\cup c(T)$.
	Moreover, for any $i\in [q-1]$ and some $u\in V(T)$, there is a rainbow path in $G$ from $u$ to $w_i$ that uses only colors from $\left(\left\{ c_0,c_1,\ldots, c_{q-1} \right\}\setminus\left\{ c_i \right\}\right)\cup c(T)$.
\end{lemma}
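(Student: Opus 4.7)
The plan is to construct rainbow paths explicitly by routing through $T$. Observe first that by Coloring Rule~\ref{rul:forest} the internal edges of $T$ receive pairwise distinct colors from $[1, f-t]$, which is disjoint from the surplus-color range $[f-t+1, f]$ containing $\{c_0, \ldots, c_{q-1}\}$. Hence any simple path inside $T$ is automatically rainbow and non-conflicting with the surplus colors, so I only need to control the surplus colors used at the two endpoints of the constructed path.

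For the first claim, given distinct $i, j \in [0, q-1]$, I would select a representative $e_i$ of $x_T w_i$ with foot $y_i \in V(T)$ and a representative $e_j$ of $x_T w_j$ with foot $y_j \in V(T)$, and take the concatenation $w_i,\, e_i,\, T_{y_i y_j},\, e_j,\, w_j$. By Coloring Rule~\ref{rul:2edgedeg4}, the color of $e_i$ lies in $A_i := \{c_{(i+2)\bmod q}, c_{(i+3)\bmod q}\}$ and that of $e_j$ in $A_j := \{c_{(j+2)\bmod q}, c_{(j+3)\bmod q}\}$. To meet the stated color palette, I need (i) $c(e_i) \neq c_j$, (ii) $c(e_j) \neq c_i$, and (iii) $c(e_i) \neq c(e_j)$. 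Since $q \ge 4$, $c_i \notin A_i$ and $c_j \notin A_j$ automatically, so (i) and (ii) each exclude at most one element from a 2-element set. If either $A_i \setminus \{c_j\}$ or $A_j \setminus \{c_i\}$ still has two elements, distinctness (iii) is trivial to arrange. The only nontrivial subcase is when both are singletons, which forces $(j-i)\bmod q \in \{2,3\}$ simultaneously with $(i-j)\bmod q \in \{2,3\}$; this pins $q$ to $\{4,5,6\}$, and in each of the finitely many resulting subcases a direct modular-arithmetic check shows that the two forced indices are distinct mod $q$.

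For the second claim, given any $u \in V(T)$, I would exit $T$ from $u$ via any representative of $x_T w_i$ with foot $y$: the resulting path $T_{uy}$ followed by that representative uses only distinct internal colors of $T$ together with one color in $A_i$, and since $q \ge 4$ this last color lies in $\{c_0,\ldots,c_{q-1}\}\setminus\{c_i\}$, as required. The main obstacle is securing condition (iii) in the first claim when both $A_i\setminus\{c_j\}$ and $A_j\setminus\{c_i\}$ collapse to singletons; the argument is purely combinatorial but does require checking the handful of boundary cases $(q,d) \in \{(4,2),(5,2),(5,3),(6,3)\}$ where $d = (j-i)\bmod q$, in each of which the two forced choices remain distinct mod $q$.
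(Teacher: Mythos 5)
Your proposal is correct and follows essentially the same route as the paper: both realize the $w_i$--$w_j$ path as a representative of $x_Tw_i$, a path inside $T$, and a representative of $x_Tw_j$, and both reduce the existence of a good choice of representatives to modular arithmetic on the offsets $i+2,i+3,j+2,j+3 \pmod q$ for $q \ge 4$. The only difference is presentational -- you select the representatives directly via a counting argument plus a finite check of the degenerate cases, whereas the paper argues by contradiction over three fixed combinations of representatives; the second claim is handled identically in both.
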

\begin{proof}
	Let $u_i$ and $v_i$ be the endpoints in $T$ of $(x_Tw_i)_1$ and $(x_Tw_i)_2$ respectively, for each $i\in \left\{ 0,1,\ldots,q-1 \right\}$.	
	First, we prove that there is a rainbow path from $w_i$ to $w_j$ with the required colors as claimed by the lemma.
	Suppose for the sake of contradiction that there was no such path.
	Consider the following three paths between $w_i$ and $w_j$:
	$P:=w_iu_iT_{u_iu_j}u_jw_{j}$,
	$P':=w_iv_iT_{v_iv_j}v_jw_{j}$, and
	$P'':=w_iv_iT_{v_iu_j}u_jw_{j}$. 
	By our assumption, each of these paths, is either not a rainbow path, or uses a color that is not in
	$\left(\left\{ c_0,c_1,\ldots\allowbreak, c_{q-1}\allowbreak \right\}\allowbreak\setminus\left\{ c_i,c_j \right\}\allowbreak\right)\allowbreak\cup c(T)$.
	Also, from Coloring rules~\ref{rul:forest} and \ref{rul:2edgedeg4}, we know that the only colors that are not in 
	$\left(\left\{ c_0,c_1,\ldots\allowbreak, c_{q-1}\allowbreak \right\}\allowbreak\setminus\left\{ c_i,c_j \right\}\allowbreak\right)\allowbreak\cup c(T)$
	that any of these three paths can use are $c_i$ and $c_j$.
	Thus, each of $P,P'$ and $P''$ is either not a rainbow path or uses $c_i$ or $c_j$.
	However, we know that the paths $T_{u_iu_j},T_{v_iv_j}$, and $T_{v_iu_j}$ are all rainbow paths due to Coloring Rule~\ref{rul:forest}, and moreover the colors used by them are disjoint from $\left\{ c_0,\ldots, c_{q-1} \right\}$. 
	For the path $P$, this means that either $c(w_iu_i)=c(u_jw_j)$ or $\left\{ c(w_iu_i),c(u_jw_j) \right\}\cap \left\{ c_i,c_j \right\}\neq \emptyset$.
	That is, either $c_{(i+2)\bmod q}=c_{(j+2)\bmod q}$ or $\left\{ c_{(i+2)\bmod q},c_{(j+2)\bmod q}\right\}\cap \left\{ c_i,c_j \right\}\neq \emptyset$.
	That is, either $i=j$ or $\left\{ {(i+2)\bmod q},{(j+2)\bmod q}\right\}\cap \left\{ i,j \right\}\neq \emptyset$.
	But we know that $(i+2)\bmod q\neq i$ and that $(j+2)\bmod q\neq j$.
	Therefore, either 
	 $(i+2)\bmod q=j$ or $(j+2)\bmod q=i$.
	Without loss of generality assume that 
	 $(i+2)\bmod q=j$.

	By using the same reasoning as above for path $P'$, we derive that either
	${(i+3)\bmod q}=j$ or $ {(j+3)\bmod q}=i$.
	Since we already have that $(i+2)\bmod q=j$, it should be the latter case, i.e, $ {(j+3)\bmod q}=i$.

	Now consider the third path $P''$.
	We have that either $c(w_iv_i)=c(u_jw_j)$ or $\left\{ c(w_iv_i),c(u_jw_j) \right\}\cap \left\{ c_i,c_j \right\}\neq \emptyset$.
	That is, either $c_{(i+3)\bmod q}=c_{(j+2)\bmod q}$ or $\left\{ c_{(i+3)\bmod q},c_{(j+2)\bmod q}\right\}\cap \left\{ c_i,c_j \right\}\neq \emptyset$.
	That is, either $(i+3)\bmod q =(j+2)\bmod q$ or $\left\{ {(i+3)\bmod q},{(j+2)\bmod q}\right\}\cap \left\{ i,j \right\}\neq \emptyset$.
	Substituting that $(i+3)\mod q=(((i+2)\mod q) +1)\mod q=(j+1)\mod q$ and that $i=(j+3)\mod q$, we get that either 
	$(j+1)\bmod q =(j+2)\bmod q$ or $\left\{ {(j+1)\bmod q},{(j+2)\bmod q}\right\}\cap \left\{ (j+3)\bmod q,j \right\}\neq \emptyset$.
	Since $j,(j+1)\bmod q,(j+2)\bmod q,$ and $(j+3)\bmod q$ are distinct for $q\ge 4$, we have a contradiction.

	Next, we prove the second part of the lemma, i.e., we prove that there is a rainbow path from $u$ to $w_i$ with the colors claimed by the lemma.
	Suppose for the sake of contradiction that there was no such path.
	Consider the path $P''':=w_iu_iT_{u_iu}$.
	We know that the path $T_{u_iu}$ uses only colors from $c(T)$ and is rainbow, and that the edge $w_iu_i$ is colored $c_{(i+2)\bmod q}$.
	Also, 
	$c_{(i+2)\bmod q}\neq c_i$ as $(i+2)\bmod q \neq i$.
	Thus $P$ is a rainbow path and uses only the colors in 
	$\left(\left\{ c_0,c_1,\ldots\allowbreak, c_{q-1}\allowbreak \right\}\allowbreak\setminus\left\{ c_i \right\}\allowbreak\right)\allowbreak\cup c(T)$.
\end{proof}
\begin{prul}
	\label{prul:2edgedeg4}
	For each $x_T$ on which Coloring Rule~\ref{rul:2edgedeg4} has been applied as above and for each  $a,b\in V(G)\setminus L_S$ such that $Q_{ab}$ contains $x_T$ (we say that the path rule is being applied on the pair $\left( x_T,P_{ab} \right))$, do the following.
	
	\noindent {\bf Case 1:}\ There are two 2-edges incident on $x_T$ in $Q_{ab}$.
	
	Let $w_i$ and $w_j$ be the neighbors of $x_T$ in $Q_{ab}$.
	Add to $P_{ab}$ the rainbow path from $w_i$ to $w_j$ as given by Lemma~\ref{lem:2edgedeg4paths}.
	
	\noindent {\bf Case 2:}\ There is one $2$-edge and one $1$-edge incident on $x_T$ in $Q_{ab}$.
	
	Let $x_Tw_i$ be the 2-edge. 
	Let $u$ be the endpoint in $T$ of the representative of the $1$-edge.
	There is a rainbow path from $w_i$ to $u$ as given by Lemma~\ref{lem:2edgedeg4paths}.
	Add this path to $Q_{ab}$.
	(Note that the representative of the $1$-edge has been already added to $P_{ab}$ during Path Rule~\ref{prul:1edge}).
	
	\noindent {\bf Case 3:}\ $x_T$ is an endpoint of $Q_{ab}$ and the only edge incident on $x_T$ in $Q_{ab}$ is a $2$-edge.
	
	Let $w_i$ be the neighbor of $x_T$ in $Q_{ab}$.
	We know one of $a$ or $b$ is in $T$.
	From this vertex ($a$ or $b$ whichever is in $T$) to $w_i$, there is a 
	rainbow path as given by Lemma~\ref{lem:2edgedeg4paths}.
	Add this path to $P_{ab}$.
\end{prul}
The following lemma follows from Lemma~\ref{lem:2edgedeg4paths} and Path Rule~\ref{prul:2edgedeg4}.
\begin{lemma}
	\label{lem:2edgedeg4paths2}
	Suppose for some $a,b\in V(G)\setminus L_S$ and for some tree $T'\in\calT$, $P_{ab}$ contains an edge $e$ that was colored with $s(T')$ during the application of Coloring Rule~\ref{rul:2edgedeg4} on some tree vertex $x_T$.
	Then, $T'\neq T$ and 
$Q_{ab}$ does not intersect $\st{x_{T}}{x_{T'}}$.
\end{lemma}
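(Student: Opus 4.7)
The plan is to trace $e$ back to the unique application of Path Rule~\ref{prul:2edgedeg4} that can have inserted it into $P_{ab}$, and then invoke the color-exclusion in Lemma~\ref{lem:2edgedeg4paths} to derive a contradiction. Since $e$ was colored $s(T')$ during Coloring Rule~\ref{rul:2edgedeg4} at $x_T$, it is a representative of some 2-edge $x_Tw_k$ in $B_1$, and $T' = T_{k''}$ for some $k''\in\{(k+2)\bmod q,(k+3)\bmod q\}$. Consequently $x_{T'} = \ct{x_T}{w_{k''}}$ lies in the component $\st{x_T}{w_{k''}}$ of $B_1\setminus x_T$, so $\st{x_T}{x_{T'}} = \st{x_T}{w_{k''}}$ and $x_{T'}\neq x_T$, which immediately settles $T'\neq T$.

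The next step is to localize how $e$ could have been inserted into $P_{ab}$. Path Rule~\ref{prul:forest} inserts only internal tree edges, and Path Rule~\ref{prul:1edge} inserts only representatives of 1-edges; since $w_k$ has at least two neighbors in $V(T)$, the edge $e$ cannot be a 1-edge representative, so neither rule adds it. An application of Path Rule~\ref{prul:2edgedeg4} at a different tree vertex $x_{T''}$ only adds edges internal to $T''$ or representatives of 2-edges incident on $x_{T''}$; the former is ruled out because $e$ has an endpoint in $S$, and the latter forces the non-$S$ endpoint of $e$ (which lies in $V(T)$) to also lie in $V(T'')$, whence $T''=T$. Hence $e$ enters $P_{ab}$ only through Path Rule~\ref{prul:2edgedeg4} applied to $x_T$ for the pair $(a,b)$; in particular $x_T\in Q_{ab}$, and since that rule only inserts representatives of 2-edges of the form $x_Tw_i$ with $w_i$ a neighbor of $x_T$ in $Q_{ab}$, the vertex $w_k$ is such a neighbor.

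Finally, suppose for contradiction that $Q_{ab}$ intersects $\st{x_T}{w_{k''}}$. Then $Q_{ab}$ passes through $x_T$ and visits the two distinct components $\st{x_T}{w_k}$ and $\st{x_T}{w_{k''}}$ of $B_1\setminus x_T$, so it uses two distinct edges at $x_T$, one into each component. Because each component of $B_1\setminus x_T$ contains a unique neighbor of $x_T$, the only edge from $x_T$ into $\st{x_T}{w_{k''}}$ is the 2-edge $x_Tw_{k''}$. Hence $Q_{ab}$ contains both 2-edges $x_Tw_k$ and $x_Tw_{k''}$, putting us in Case~1 of Path Rule~\ref{prul:2edgedeg4}. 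By Lemma~\ref{lem:2edgedeg4paths}, the path inserted from $w_k$ to $w_{k''}$ uses colors only from $(\{c_0,\ldots,c_{q-1}\}\setminus\{c_k,c_{k''}\})\cup c(T)$, which excludes $c(e)=c_{k''}=s(T')$; thus $e$ cannot be among the inserted edges, contradicting $e\in P_{ab}$. The main difficulty is the bookkeeping in the second paragraph: pinning down the unique rule capable of inserting $e$ is precisely what allows the color-avoidance of Lemma~\ref{lem:2edgedeg4paths} to close the argument.
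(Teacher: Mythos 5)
Your proof is correct and follows essentially the same route as the paper's: identify $T'$ as one of the $T_i$'s chosen at $x_T$ (giving $T'\neq T$), and then use the color-exclusion guarantee of Lemma~\ref{lem:2edgedeg4paths} to show that if $Q_{ab}$ met $\st{x_T}{x_{T'}}$, the path inserted by Path Rule~\ref{prul:2edgedeg4} on $(x_T,P_{ab})$ would avoid $s(T')$ and hence could not contain $e$. Your version merely makes explicit two steps the paper leaves implicit — that $e$ can only enter $P_{ab}$ via Path Rule~\ref{prul:2edgedeg4} applied at $x_T$ itself, and that intersecting $\st{x_T}{w_{k''}}$ forces $x_Tw_{k''}$ into $Q_{ab}$ — which is a welcome tightening rather than a different argument.
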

\begin{proof}
	Since $s(T')$ was used during the application of Coloring Rule~\ref{rul:2edgedeg4} on $x_{T}$, the vertex $x_{T'}$ should have been taken as $x_{T_i}$ (in Coloring Rule~\ref{rul:2edgedeg4}) for some $i$
	and $s(T')$ was taken as $c_i$ (in Coloring Rule~\ref{rul:2edgedeg4}).
	Since $T_i\neq T$, it is clear that $T'\neq T$.
Suppose 	
$Q_{ab}$ intersects $\st{x_{T}}{x_{T'}}$ for the sake of contradiction.
That is, $Q_{ab}$ intersects $\st{x_{T}}{x_{T_i}}$. 
Then the color $c_i$ was not used in Path Rule~\ref{prul:2edgedeg4} according to
Lemma~\ref{lem:2edgedeg4paths}.
That means $e$ was not colored with $c_i$, which
is a contradiction.
\end{proof}
\begin{lemma}
	\label{lem:inv2edgedeg4}
Invariant \ref{inv:part-rainbow} is not violated during 
	Path Rule~\ref{prul:2edgedeg4}.
\end{lemma}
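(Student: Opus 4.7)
The plan is to fix an application of Path Rule~\ref{prul:2edgedeg4} on a tree vertex $x_T$ of $2$-edge degree $q \ge 4$ and a pair $(a,b)$ with $x_T \in Q_{ab}$, assume Invariant~\ref{inv:part-rainbow} held before the application, and verify it still holds after; iterating over all applications then yields the lemma. By Lemma~\ref{lem:2edgedeg4paths}, the edges newly added to $P_{ab}$ already form a rainbow path whose colors lie in $\{c_0,\ldots,c_{q-1}\} \cup c(T)$, with $c_i, c_j$ excluded in Case~1 and $c_i$ excluded in Cases~2 and~3. It therefore suffices to verify that no color used by the new edges already appears on an edge of $P_{ab}$.

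For the internal colors of $T$, I would first observe that no internal edge of $T$ lies in $P_{ab}$ prior to this invocation. In Cases~1 and~2, the vertex $x_T$ is interior to $Q_{ab}$, so $a, b \notin V(T)$, and the only potential source of internal edges of $T$ would be Path Rule~\ref{prul:1edge}, whose three bullets require either $a$ or $b$ to lie in $V(T)$ or else two distinct $1$-edges of $Q_{ab}$ incident to $x_T$; neither situation arises since at most one $1$-edge touches $x_T$ in $Q_{ab}$. In Case~3 the unique edge of $Q_{ab}$ at $x_T$ is a $2$-edge, so Path Rule~\ref{prul:1edge} contributes nothing at $x_T$, and Path Rule~\ref{prul:forest} does not apply since $a, b$ are not both in $V(T)$. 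Earlier applications of Path Rule~\ref{prul:2edgedeg4} are performed on other tree vertices $x_{T'}$ and can only add internal edges of those other trees. Combined with Invariant~\ref{inv:internalcolor}, which is preserved by Coloring Rule~\ref{rul:2edgedeg4} since that rule uses only surplus colors on edges outside $T$, the internal colors of $T$ have not appeared anywhere in $P_{ab}$.

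For each newly used surplus color $c_k = s(T_k)$ with $k$ outside the excluded set, I must rule out two sources of a pre-existing edge of $P_{ab}$ with color $c_k$. The first source is a $1$-edge colored $c_k$ by Coloring Rule~\ref{rul:1edge}. A short case analysis of that rule shows that every such $1$-edge lies entirely inside $\st{x_T}{w_k}$: if it is the outgoing edge of $x_{T_k}$, its head is not $x_T$ (because $x_T x_{T_k}$ is not an edge of $H$, tree vertices forming an independent set) and hence lies in the same component of $B_1 \setminus x_T$ as $x_{T_k}$; if it is an edge $\dir{uv}$ with $u$ non-tree and $x_{T_k}$ a tree-vertex child of $u$, then $u$ is adjacent to $x_{T_k}$ in $B_1$ and so is in $\st{x_T}{w_k}$, while $\pa(u) = v$ cannot equal $x_T$, for otherwise $u$ would be a $1$-edge neighbor of $x_T$ and thus in a different component from $w_k$. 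Because $k$ is chosen so that $Q_{ab}$ does not enter $\st{x_T}{w_k}$ (it passes through $x_T$ only via the edges to $w_i$ and, in Case~1, $w_j$), no such $1$-edge lies on $Q_{ab}$, hence none is in $P_{ab}$.

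The second source is an edge added to $P_{ab}$ during an earlier application of Path Rule~\ref{prul:2edgedeg4} on a different tree vertex $x_{T'}$, colored $c_k = s(T_k)$ by Coloring Rule~\ref{rul:2edgedeg4} at $x_{T'}$. By Lemma~\ref{lem:2edgedeg4paths2}, $Q_{ab}$ then does not intersect $\st{x_{T'}}{x_{T_k}}$. However, the unique $B_1$-path from $x_T$ to $x_{T_k}$ begins $x_T, w_k, \ldots$ and, because $x_{T_k}$ is the \emph{closest} tree vertex to $w_k$ in $\st{x_T}{w_k}$, contains no tree vertex other than its two endpoints; in particular $x_{T'}$ does not lie on it, so $x_T$ and $x_{T_k}$ are in the same component of $B_1 \setminus x_{T'}$, i.e., $x_T \in \st{x_{T'}}{x_{T_k}}$. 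Since $x_T \in Q_{ab}$, this contradicts Lemma~\ref{lem:2edgedeg4paths2}, ruling out the second source. Invariant~\ref{inv:part-rainbow} is therefore preserved. The main technical obstacle is this last ``closest tree vertex'' argument, together with the case analysis confining the color-$c_k$ $1$-edges to $\st{x_T}{w_k}$; once these structural facts are in hand, the remainder is straightforward bookkeeping about which rules contribute edges to $P_{ab}$.
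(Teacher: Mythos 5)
Your proposal is correct and takes essentially the same route as the paper's proof: reduce to the case where the repeated color is a surplus color $s(T_k)$, then split on whether the pre-existing edge was colored by Coloring Rule~\ref{rul:1edge} or by Coloring Rule~\ref{rul:2edgedeg4} at another tree vertex, and in each case use the component structure of $B_1\setminus x_T$ (i.e., Lemma~\ref{lem:2edgedeg4paths2} or a direct re-derivation of its content) to reach a contradiction. The only minor variation is in the second sub-case, where you apply Lemma~\ref{lem:2edgedeg4paths2} to the old edge at $x_{T'}$ and conclude $x_T\notin Q_{ab}$, whereas the paper applies it to the new edge at $x_T$ and concludes that $s(T_j)$ could not have been used at $x_{T'}$; both arguments are sound.
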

\begin{proof}
	Suppose Invariant \ref{inv:part-rainbow} is violated during the application of Path Rule~\ref{prul:2edgedeg4} on the pair $\left(x_T,P_{ab}\right)$.
Then there exist edges $e$ and $e'$ in $P_{ab}$ having the same color after the application of the path rule.
We can assume without loss of generality that $e$ was added during the application of Path Rule~\ref{rul:2edgedeg4} on $(x_T,P_{ab})$.
That means $e$ was colored during the application of Coloring Rule~\ref{rul:2edgedeg4} on $x_T$.
Then either $e\in E(T)$ or $h(e)=w_ix_T$ for some $i\in [0,q-1]$.
Since each color in $c(T)$ has been used only in one edge in $G$, we have
that $h(e)=w_ix_T$ for some $i\in [0,q-1]$ 
and hence $c(e)=s(T_j)$ for some $j\in [0,q-1]\setminus{i}$.
Also $Q_{ab}$ does not intersect $\st{x_T}{x_{T_j}}$
by Lemma~\ref{lem:2edgedeg4paths2}.
Since the application of Path Rule~\ref{prul:2edgedeg4} on $(x_T,P_{ab})$ added a rainbow path to $P_{ab}$,
the edge $e'$ was not added during this application.
Since each color in $c(F)$ has been used for only one edge in $G$ so far, we know that $e'$ was not added during Path Rule~\ref{prul:forest}.
Hence, the following two cases are exhaustive and in both cases we derive a contradiction.

\noindent {\bf Case 1:}\ $e'$ was added during the application of Path Rule~\ref{prul:2edgedeg4} on $(x_{T'}, P_{ab})$ for some tree $T'\neq T$.

Since $P_{ab}$ contains $e'$, we have that $Q_{ab}$ contains $h(e')$.
Since $e'$ was added during the application of Path Rule~\ref{prul:2edgedeg4} on $(x_{T'},P_{ab})$, 
either $e'\in E(T')$ or $h(e')$ is incident on $x_{T'}$. 
In either case, $x_{T'}$ is in $Q_{ab}$.
Since $Q_{ab}$ does not intersect $\st{x_T}{x_{T_j}}$, we have that $x_{T'}$ is not in $\st{x_T}{x_{T_j}}$.
This implies that $\dist_{\und{B_1}}(x_{T'},x_{T})< \dist_{\und{B_1}}(x_{T'},x_{T_j})$
But then during the application of Coloring Rule~\ref{rul:2edgedeg4} on $x_{T'}$, the color $s(T_j)$ would never be used as $x_{T_j}\neq \ct{x_{T'}}{v}$ for any vertex $v$. 
Thus, the color of $e'$ is not $s(T_j)$.
But
we know that $c(e')=c(e)=s(T_j)$, a contradiction.

\noindent {\bf Case 2:}\ $e'$ was added during the application of Path Rule~\ref{prul:1edge} on $P_{ab}$.
 
This means $e'$ is the representative of a $1$-edge and was colored during Coloring Rule~\ref{rul:1edge}. 
Since $e'$ is colored with $s(T_j)$, we have that $h(e')$ should either be the outgoing edge of $x_{T_j}$ or the outgoing edge of the parent of $x_{T_j}$, from Coloring Rule~\ref{rul:1edge}.
This implies that $h(e')$ is in $\st{x_T}{x_{T_j}}$,
as the parent of $x_{T_j}$ is a non-tree vertex. 
But then $Q_{ab}$ does not contain $h(e')$ as $Q_{ab}$ does not intersect $\st{x_T}{x_{T_j}}$.
Thus $P_{ab}$ does not contain $e'$, which is a contradiction.
\end{proof}

\begin{figure}[b]
	\centering
	\includegraphics[scale=1.25,keepaspectratio]{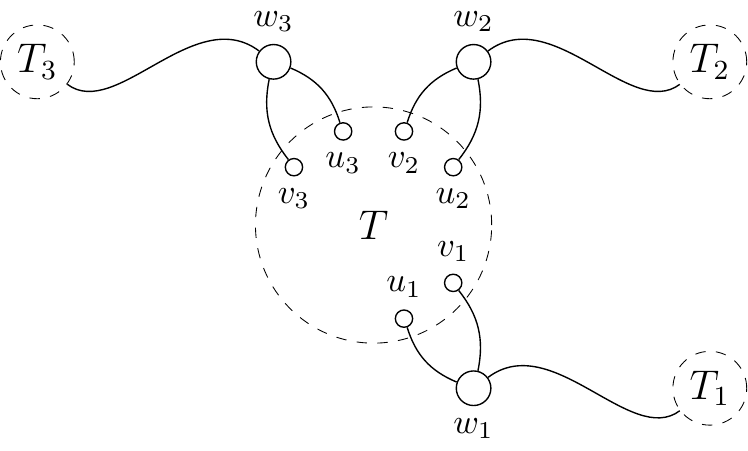}
	\caption{A scenario in which Coloring Rule~\ref{rul:2edgedeg3} is applicable on $x_T$.}
	\label{fig:2edgedeg3-1}
\end{figure}

\begin{figure}[t]
	\centering
	\includegraphics[scale=1.25,keepaspectratio]{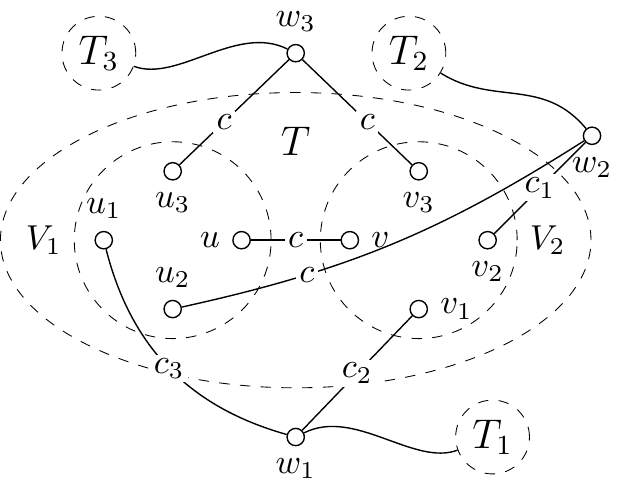}
	\caption{Case~1 of Coloring Rule~\ref{rul:2edgedeg3}.}
	\label{fig:2edgedeg3-2}
\end{figure}

\begin{figure}
    \centering
    \begin{subfigure}{0.33\textwidth}
    \centering
        \includegraphics[scale=1.0,keepaspectratio]{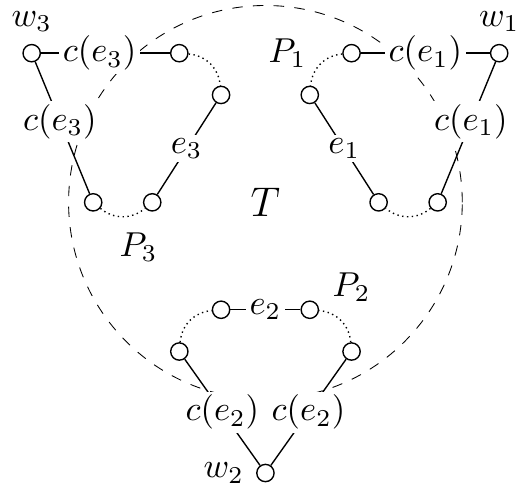}
        \caption{}
    \end{subfigure}%
    \hfill
    \begin{subfigure}{0.33\textwidth}
    \centering
		\includegraphics[scale=1.0,keepaspectratio]{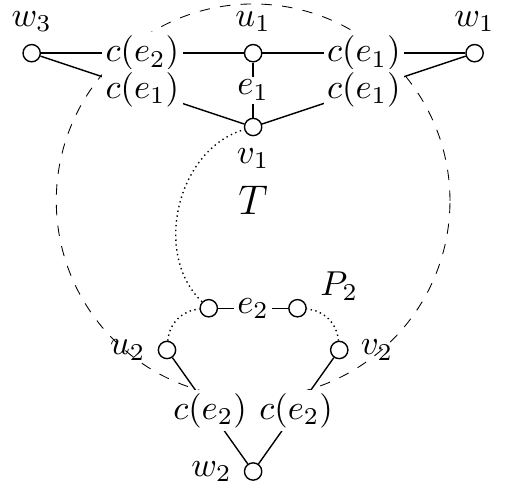}
		\caption{}
    \end{subfigure}%
    \hfill
    \begin{subfigure}{0.33\textwidth}
    \centering
		\includegraphics[scale=1.0,keepaspectratio]{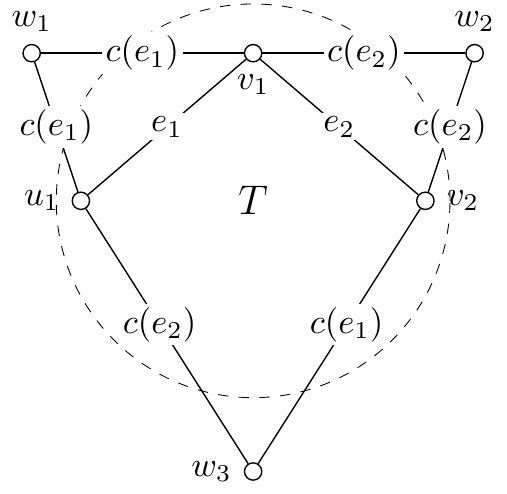}
		\caption{}
    \end{subfigure}
	\caption{Cases 2 and 3 of Coloring Rule~\ref{rul:2edgedeg3}. \textbf{(a)} Case~2. Note that $P_1$, $P_2$, and $P_3$ are not necessarily disjoint. \textbf{(b)} Case~3, scenario~1. Note that $u_1 = u_3$ and $v_1=v_3$. \textbf{(c)} Case~3, scenario~2.   Note that $u_1 = u_3$, $v_1 = u_2$ and $v_3=v_2$.}
	\label{fig:2edgedeg3-3-4-5}
\end{figure}

\begin{rul}
\label{rul:2edgedeg3}
For each tree vertex $x_T$ with 2-edge degree exactly $3$ (see Figure~\ref{fig:2edgedeg3-1}), 	
let $w_1$, $w_2$, and $w_3$ be the other endpoints of the three 2-edges incident on $x_T$.
Further, for $i\in\left\{ 1,2,3\right\}$, let $x_{T_{i}}=\ct{x_T}{w_i}$, 
let $u_i$ and $v_i$ be the foots of $x_Tw_i$ in $T$, 
let $P_i:=T_{u_iv_i}$,
and let $c_i:=s(T_i)$.

\noindent {\bf Case 1:}\ There exists an edge $uv$ in $T$ such that the cut $(V_1,V_2)$ induced by $uv$ in $T$ is such that for all $i\in\{1,2,3\}$, $|V_1\cap\left\{ u_i,v_i \right\}|=1$ and $|V_2\cap\left\{ u_i,v_i \right\}|=1$.
(For an illustration, see Figure~\ref{fig:2edgedeg3-2}).

Without loss of generality, let $u_i$ and $v_i$ be the foots of $x_Tw_i$ in $V_1$ and $V_2$ respectively for each $i\in \left\{ 1,2,3 \right\}$.
Let $c$ be the color of $uv$.
Color $u_1w_1$ with $c_3$, $v_1w_1$ with $c_2$,
$u_2w_2$ with $c$, $v_2w_2$ with $c_1$,
$u_3w_3$ with $c$, and $v_3w_3$ with $c$,
as shown in Figure~\ref{fig:2edgedeg3-2}.

\noindent {\bf Case 2:}\ There exist distinct edges $e_1,e_2,e_3$ such that $e_i\in E(P_i)$  for each $i\in \left\{ 1,2,3 \right\}$.
(For an illustration, see Figure~\ref{fig:2edgedeg3-3-4-5}~(a)).

Color both the representatives of $x_Tw_i$ with the color of $e_i$ for each $i\in \left\{ 1,2,3 \right\}$.

\noindent {\bf Case 3:}\ Case 1 and 2 do not apply. 

Because Case~1 and~2 do not apply, there exist $i,j\in \{1,2,3\}$ such that $E(P_i)\cap E(P_j)=\emptyset$, because otherwise $E(P_1)\cap E(P_2)\cap E(P_3)\neq \emptyset$ using the Helly property of trees\footnote{We use the following Helly property of trees: if $T_1,T_2,\ldots, T_k$ are subtrees of a tree $T$ that pairwise intersect each other on at least one edge, then there is an edge of $T$ that is common to all of $T_1,T_2,\ldots,T_k$.} 
and then any edge in this intersection qualifies as $uv$ of Case 1.
So, without loss of generality assume that $E(P_1)\cap E(P_2)=\emptyset$.
Also, note that $E(P_3)\subseteq E(P_1)\cup E(P_2)$ because otherwise Case 2 applies. 
So, without loss of generality assume that $E(P_3)\cap E(P_1)\neq \emptyset$.
But then $E(P_3)\cap E(P_1)=E(P_1)$ and $P_1$ consists of a single edge so that Case 2 does not apply. 
Let this edge be $e_1$.
Note that $e_1=u_1v_1$.
Furthermore, at least one of the end-vertices of $P_1$ and $P_3$ coincide so that Case 2 does not apply.
Thus, assume without loss of generality that $u_1=u_3$.
Let $e_2$ be any edge in $P_2$.
Without loss of generality assume that $v_1$ is the closer vertex among $u_1$ and $v_1$ to path $P_2$ in $T$.
The two possible scenarios in this case are shown in Figure~\ref{fig:2edgedeg3-3-4-5}~(b)~and~(c).
Color $w_1u_1$ and $w_1v_1$ with $c(e_1)$, $w_2u_2$ and $w_2v_2$ with $c(e_2)$, $w_3u_3$ with $c(e_2)$ and $w_3v_3$ with $c(e_1)$.\\
\end{rul}

The following lemma follows from the way in which we have colored the edges incident on $x_T$ in Coloring Rule~\ref{rul:2edgedeg3}.
\begin{lemma}
	\label{lem:2edgedeg3paths}
	For each tree vertex $x_T$ on which Rule~\ref{rul:2edgedeg3} has been applied as above, for distinct $i,j\in \left\{ 1,2,3 \right\}$, there is a rainbow path from $w_i$ to $w_j$ in $G$ that 
	uses only the colors from $(\left\{ c_1,c_2,c_3 \right\}\setminus\left\{ c_i,c_j \right\})\cup c(T)$.
	Also, for any $i\in\left\{ 1,2,3 \right\}$, and any $z\in V(T)$,
	there is a rainbow path from $z$ to $w_i$,
	that uses only the colors from $(\left\{ c_1,c_2,c_3 \right\}\setminus\left\{ c_i \right\})\cup c(T)$.
\end{lemma}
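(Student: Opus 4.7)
The plan is to handle the three cases of Coloring Rule~\ref{rul:2edgedeg3} separately, and within each case to exhibit for every required pair an explicit rainbow path. The general schema for a $w_i$-to-$w_j$ path is always the same: leave $w_i$ via one of the two representatives of $x_Tw_i$ (landing at a foot in $\{u_i,v_i\}$), traverse a carefully chosen path inside $T$, and re-enter $x_Tw_j$ through a foot in $\{u_j,v_j\}$, landing at $w_j$. Since the in-tree traversal uses only colors from $c(T)$, the remaining task is (i) to ensure that the colors of the two external representatives we pick lie in the allowed palette, and (ii) to choose the tree path so that, whenever the color of an external representative happens to be a tree-color $c(e)$ for some $e\in E(T)$, the tree path avoids that edge $e$. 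The second claim (a path from an arbitrary $z\in V(T)$ to $w_i$) will be handled uniformly at the end by an invocation of Observation~\ref{obs:edgeexclusion}.

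For Case~1, I will exploit the cut edge $uv$. Since each foot-pair $\{u_i,v_i\}$ straddles the cut, for any pair $(i,j)$ I can pick feet on the same side, so the tree portion of the path stays inside $V_1$ or inside $V_2$ and in particular does not use the edge $uv$ (whose colour $c$ is shared with several external representatives). Concretely, $w_1\!-\!w_2$ is routed through $V_1$ via $w_1\,u_1\,T_{u_1u_2}\,u_2\,w_2$, and $w_1\!-\!w_3$ and $w_2\!-\!w_3$ are routed through $V_2$ via the $v$-feet. A direct check against the coloring assignment shows that the external colors used fall inside $(\{c_1,c_2,c_3\}\setminus\{c_i,c_j\})\cup c(T)$ as required.

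For Case~2, the key observation is combinatorial. To connect $w_i$ to $w_j$, I need a path in $T$ from some foot of $w_i$ to some foot of $w_j$ that avoids both $e_i$ and $e_j$, since these are precisely the tree-edges whose colors are reused on the external edges $x_Tw_i$ and $x_Tw_j$. Removing $e_i$ and $e_j$ from $T$ leaves at most three components, and since $u_i,v_i$ are separated by $e_i$ and $u_j,v_j$ are separated by $e_j$, a pigeonhole argument forces some pair $(y_i,y_j)$ with $y_i\in\{u_i,v_i\}$ and $y_j\in\{u_j,v_j\}$ to lie in the same component. The resulting path uses only colors from $c(T)$, and is rainbow because $e_i\neq e_j$ and the tree portion touches neither of them.

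The main obstacle will be Case~3, where the color $c(e_1)$ is reused on three external half-edges ($w_1u_1$, $w_1v_1$, $w_3v_3$) and $c(e_2)$ on three others ($w_2u_2$, $w_2v_2$, $w_3u_3$), so one really has to check each pair by hand using the rigid structure $u_1=u_3$, $P_1=\{e_1\}$, $e_2\in P_2$, and the fact that $v_1$ is the endpoint of $e_1$ closer to $P_2$. I would route $w_1\!-\!w_2$ as $w_1\,v_1\,T_{v_1u_2}\,u_2\,w_2$ (which avoids $e_1$ because the path leaves $v_1$ toward $P_2$ rather than toward $u_1$, and avoids $e_2$ because $v_1$'s entry point into $P_2$ is $u_2$ by the choice of $v_1$), route $w_2\!-\!w_3$ through $u_3=u_1$ via $w_2\,v_2\,T_{v_2v_3}\,v_3\,w_3$ when $v_3\ne v_2$ and directly otherwise, and route $w_1\!-\!w_3$ along the single edge $e_1$; the two sub-scenarios in Figure~\ref{fig:2edgedeg3-3-4-5}(b)--(c) need to be verified separately. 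Finally, for the second claim, given any $z\in V(T)$ and any $i$, Observation~\ref{obs:edgeexclusion} applied to $\{u_i,v_i,z\}$ guarantees that at least one of $T_{zu_i}$ or $T_{zv_i}$ avoids the tree-edge whose color equals that of the corresponding external representative of $x_Tw_i$; appending that representative yields the required rainbow path, whose colours lie in $c(T)\cup\{c(x_Tw_i)_\star\}\subseteq(\{c_1,c_2,c_3\}\setminus\{c_i\})\cup c(T)$ in all three cases.
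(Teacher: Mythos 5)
Your overall strategy coincides with the paper's: a case analysis following the three cases of Coloring Rule~\ref{rul:2edgedeg3}, with paths of the form (representative)--(path inside $T$)--(representative), and Observation~\ref{obs:edgeexclusion} used to dodge the tree-edges whose colors are recycled on the representatives. Cases~1 and~2 of the first claim are fine; in fact your pigeonhole argument in Case~2 (finding feet $y_i,y_j$ in a single component of $T-\{e_i,e_j\}$, so that both $e_i$ and $e_j$ are avoided) is somewhat more careful than the paper's, which only invokes the observation to avoid $e_j$.

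Case~3, however, contains two concrete errors. First, your $w_1$--$w_2$ route $w_1v_1T_{v_1u_2}u_2w_2$ relies on the claim that ``$v_1$'s entry point into $P_2$ is $u_2$,'' which does not follow from the hypotheses: in the sub-scenario where $P_3=P_1$ (i.e.\ $v_3=v_1$), the vertex $v_1$ may attach to an interior vertex of $P_2$, and since $e_2$ is an \emph{arbitrary} edge of $P_2$ it may lie on the $u_2$-side of that attachment point, so $T_{v_1u_2}$ can contain $e_2$ and the path repeats $c(e_2)$. Second, the $w_2$--$w_3$ route through the fixed foot $v_2$ can fail for the same reason: $T_{v_2v_3}$ may contain $e_2$, clashing with the color $c(e_2)$ of $w_2v_2$. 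In both places the foot of $w_2$ must be chosen \emph{adaptively} via Observation~\ref{obs:edgeexclusion} (take whichever of $u_2,v_2$ gives a tree path avoiding $e_2$), and one must then separately argue that the chosen path also avoids $e_1$ --- the paper does exactly this, using that $v_1$ is closer than $u_1$ to $P_2$. Finally, your uniform treatment of the $z$-to-$w_i$ claim breaks for $w_3$ in Case~3: there the two representatives of $x_Tw_3$ carry \emph{different} colors, $c(e_2)$ on $w_3u_3$ and $c(e_1)$ on $w_3v_3$, so a single application of Observation~\ref{obs:edgeexclusion} with one edge does not yield the needed disjunction ``$T_{zu_3}$ avoids $e_2$, or $T_{zv_3}$ avoids $e_1$''; this mixed statement is true but needs its own argument (e.g.\ if $T_{zv_3}$ crosses $e_1$ then $z$ lies on the $u_1$-side of $e_1$ while $P_2$, hence $e_2$, lies on the $v_1$-side, so $T_{zu_3}$ avoids $e_2$), which is the extra case analysis the paper carries out.
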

\begin{proof}
	We demonstrate the required paths in each of the three cases of Coloring Rule~\ref{rul:2edgedeg3}.	

	\noindent {\bf Case 1:}\ Between $w_1$ and $w_2$, there is the rainbow path $w_1u_1T_{u_1u_2}u_2w_2$  that uses only the colors in $c(T)\cup \left\{ c_3 \right\}$.
	Between $w_1$ and $w_3$, there is the rainbow path $w_1v_1T_{v_1v_3}v_3w_3$  that uses only the colors in $c(T)\cup \left\{ c_2 \right\}$.
	Between $w_2$ and $w_3$, there is the rainbow path $w_2v_2T_{v_2v_3}v_3w_3$  that uses only the colors in $c(T)\cup \left\{ c_1 \right\}$.

	Now consider any vertex $z\in V(T)$.
	Suppose $z\in V_1$.
	Between $z$ and $w_1$, there is the rainbow path $T_{zu_1}u_1w_1$  that uses only the colors in $c(T)\cup \left\{ c_3 \right\}$.
	Between $z$ and $w_2$, there is the rainbow path $T_{zv_2}v_2w_2$  that uses only the colors in $c(T)\cup \left\{ c_1 \right\}$.
	Between $z$ and $w_3$, there is the rainbow path $T_{zu_3}u_3w_3$  that uses only the colors in $c(T)$.

	Now, suppose $z\in V_2$.
	Between $z$ and $w_1$, there is the rainbow path $T_{zv_1}v_1w_1$  that uses only the colors in $c(T)\cup \left\{ c_2 \right\}$.
	Between $z$ and $w_2$, there is the rainbow path $T_{zv_2}v_2w_2$  that uses only the colors in $c(T)\cup \left\{ c_1 \right\}$.
	Between $z$ and $w_3$, there is the rainbow path $T_{zv_3}v_3w_3$  that uses only the colors in $c(T)$.

	\noindent {\bf Case 2:}\ First we show the path between $w_1$ and $w_2$. 
	By Observation~\ref{obs:edgeexclusion}, either $T_{u_1u_2}$ or $T_{u_1v_2}$ does not contain the edge $e_2$.
	If $T_{u_1u_2}$ does not contain $e_2$, then the path $w_1u_1T_{u_1u_2}u_2w_2$ is a rainbow path and uses only the colors
	in $c(T)$;
	otherwise (i.e., if $T_{u_1v_2}$ does not contain $e_2$) then the path $w_1u_1T_{u_1v_2}v_2w_2$ is a rainbow path and uses only the colors
	in $c(T)$.
	The required paths between $w_2$ and $w_3$, and between $w_1$ and $w_2$ can be shown in a similar way.

	Now for any vertex $z$ in $T$, we show the required path between $z$ and $w_1$.
	By Observation~\ref{obs:edgeexclusion}, either $T_{zu_1}$ or $T_{zv_1}$ does not contain the edge $e_1$.
	If $T_{zu_1}$ does not contain $e_1$, then the path $T_{zu_1}u_1w_1$ is a rainbow path and uses only the colors
	in $c(T)$;
	otherwise (i.e., if $T_{zv_1}$ does not contain $e_1$) then the path $T_{zv_1}v_1w_1$ is a rainbow path and uses only the colors
	in $c(T)$.
	The required paths between $w_2$ and $z$, and between $w_3$ and $z$ can be shown in a similar way.

	\noindent {\bf Case 3:}\ 
	First we show the required path between $w_1$ and $w_2$. 
	By Observation~\ref{obs:edgeexclusion}, either $T_{v_1u_2}$ or $T_{v_1v_2}$ does not contain the edge $e_2$.
	Observe that both $T_{v_1u_2}$ and $T_{v_1v_2}$ does not contain the edge $e_1$ as $v_1$ is closer than $u_1$ to $P_2$, as mentioned in the Coloring Rule. 
	Hence, if $T_{v_1u_2}$ does not contain $e_2$, then
	the path $w_1v_1T_{v_1u_2}u_2w_2$ is a rainbow path that uses only the colors in $\left\{ c(e_2),c(e_1) \right\}\cup c(T)$;
	and otherwise (i.e., if $T_{v_1v_2}$ does not contain $e_2$),
	the path $w_1v_1T_{v_1v_2}v_2w_2$ is a rainbow path that uses only the colors in $\left\{ c(e_2),c(e_1) \right\}\cup c(T)$.

	Since $u_1=u_3$, there is the path $w_1u_1w_3$ between $w_1$ and $w_3$ that uses only the colors in $\left\{ c(e_1),c(e_2) \right\}$.
	Next, we show the required path between $w_3$ and $w_2$. 
	By Observation~\ref{obs:edgeexclusion}, either $T_{v_3u_2}$ or $T_{v_3v_2}$ does not contain the edge $e_2$.
	Let $v'$ be the vertex in $\{u_2,v_2\}$ such that $T_{v_3v'}$ does not contain edge $e_2$.
	We show that the path $w_3v_3T_{v_3v'}v'w_2$ is the required path between $w_3$ and $w_2$.
	We know $w_3v_3$ is colored $c(e_1)$ and $w_2v'$ is colored $c(e_2)$.
	So, it is sufficient to show that $c(e_1)$ and $c(e_2)$ does not appear in $T_{v_3v'}$.
	For this, it is sufficient to prove that $e_1$ and $e_2$ is not in $T_{v_3v'}$.
	Since we picked $v'$ such that $T_{v_3v'}$ does not contain $e_2$, 
	it only remains to prove that $e_1$ is not in $T_{v_3v'}$. 
	Suppose for the sake of contradiction that $e_1$ is in $T_{v_3v'}$.
	That means both $v_1$ and $u_1$ are in $T_{v_3v'}$.
We know that $v_1$ is closer than $u_1$ to $P_2$, as mentioned in the Coloring Rule.
Hence, $v_1$ is closer than $u_1$ to $v'$ in $T$. 
This also implies that $v_1$ is closer than $u_1$ to $v'$ in $T_{v_3v'}$.
Then $u_1$ is closer than $v_1$ to $v_3$ in $T_{v_3v'}$ and hence also in $T$.
But then $P_3$ contains edges that are not in both $P_1$ and $P_2$, a contradiction.

Now consider any vertex $z\in V(T)$.
For each $i\in \left\{ 1,3 \right\}$, let $v_i'$ be the closer vertex among $u_i,v_i$ to $z$.
and let $P_i$ be the path $T_{zv'_i}v'_iw_i$.
We show that $P_i$ is the required path from $z$ to $w_i$ for $i\in \left\{ 1,3 \right\}$.
The path from $z$ to $v'_1$ does not contain $e_1$.
Also, the edge $w_1v'_1$ is colored with $c(e_1)$.
Hence $P_1$ is a rainbow path from $z$ to $w_1$ that uses only colors in $c(T)$.
Now consider path $P_3$.
First consider the case when $v'_3=u_3=u_1$.
Then $e_2$ is not in $T_{zv'_3}$, because otherwise either $P_3$ contains edges that are not in $P_1\cup P_2$ or $u_1$ is closer than $v_1$ to $P_2$.
Since $u_3w_3$ is colored $c(e_2)$, the path $P_3$ satisfies the requirements. 
Now consider the case when $v'_3=v_1$.
Then $e_1$ is clearly not in $T_{zv'_3}$. 
Since $v_3w_3$ is colored $c(e_1)$, the path $P_3$ satisfies the requirements. 

Now we show the required path from $z$ to $w_2$.
By Observation~\ref{obs:edgeexclusion}, either $T_{zv_2}$ or $T_{zu_2}$ does not contain the edge $e_2$.
	Let $v'_2$ be the vertex in $\{u_2,v_2\}$ such that $T_{zv'_2}$ does not contain edge $e_2$.
	Then the path $T_{zv'_2}v'_2w_2$ is the required path between $z$ and $w_2$, as $v_2'w_2$ is colored $c(e_2)$.
\end{proof}
\begin{prul}
	\label{prul:2edgedeg3}
	For each $x_T$ on which Coloring Rule~\ref{rul:2edgedeg3} has been applied as above and for each  $P_{ab}$ such that $Q_{ab}$ contains $x_T$ $($we say that the rule is being applied on the pair $\left( x_T,P_{ab} \right))$, do the following.
	
	\noindent {\bf Case 1:}\ $x_T$ has two 2-edges incident in $Q_{ab}$.
	
	Let $w_i$ and $w_j$ be the neighbors of $x_T$ in $Q_{ab}$.
	Add to $P_{ab}$ the rainbow path from $w_i$ to $w_j$ as given by Lemma~\ref{lem:2edgedeg3paths}.
	
	\noindent {\bf Case 2:}\ $x_T$ has exactly one 2-edge and exactly one 1-edge incident in $Q_{ab}$.
	
	Let $x_Tw_i$ be the 2-edge and let $z$ be the endpoint in $T$ of the $1$-edge.
	Add to $P_{ab}$ the rainbow path from $w_i$ to $z$ as given by Lemma~\ref{lem:2edgedeg3paths}.
	
	\noindent {\bf Case 3:}\ $x_T$ is an endpoint of $Q_{ab}$ and has one 2-edge incident in $Q_{ab}$.
	
	Let $w_i$ be the neighbor of $x_T$ in $Q_{ab}$.
	We know one of $a$ or $b$ is in $T$.
	From this vertex ($a$ or $b$, whichever is in $T$) to $w_i$, there is a 
	rainbow path as given by Lemma~\ref{lem:2edgedeg3paths}.
	Add this path to $P_{ab}$.
\end{prul}
The following lemma follows from Lemma~\ref{lem:2edgedeg3paths} and Path Rule~\ref{rul:2edgedeg3}.
The proof is similar to that of Lemma~\ref{lem:2edgedeg4paths2} and is omitted.
\begin{lemma}
	\label{lem:2edgedeg3paths2}
	Suppose for some $a,b\in V(G)\setminus L_S$ and for some tree $T'\in\calT$, $P_{ab}$ contains an edge $e$ that was colored with $s(T')$ during the application of Coloring Rule~\ref{rul:2edgedeg3} on some tree vertex $x_T$.
	Then, $T'\neq T$ and 
$Q_{ab}$ does not intersect $\st{x_{T}}{x_{T'}}$.
\end{lemma}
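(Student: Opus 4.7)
The plan is to mirror the proof of Lemma~\ref{lem:2edgedeg4paths2} essentially line-for-line, but first I need a short case inspection of Coloring Rule~\ref{rul:2edgedeg3}. Observe that surplus colors $c_i = s(T_i)$ are only assigned in Case~1 of the rule (Cases~2 and~3 use the internal tree colors $c(e_1), c(e_2), c(e_3)$, which lie in the disjoint range $[1, f-t]$). Hence $s(T') = c_i$ for some $i \in \{1,2,3\}$, so $x_{T'} = \ct{x_T}{w_i} = x_{T_i}$, and since $x_{T_i} \neq x_T$ we obtain $T' \neq T$.

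Next, I would suppose for contradiction that $Q_{ab}$ intersects $\st{x_T}{x_{T_i}}$. Since $e$ was assigned a surplus color by Coloring Rule~\ref{rul:2edgedeg3} at $x_T$, inspection of Case~1 shows that $e$ is a representative of some 2-edge $x_Tw_j$ incident on $x_T$; moreover a direct check of the color table in Case~1 (where $u_1w_1, v_1w_1$ receive $c_3, c_2$, and $v_2w_2$ receives $c_1$) verifies that $c_k$ is never placed on a representative of $x_Tw_k$, so $j \neq i$. Because $e \in P_{ab}$, the edge $h(e) = x_Tw_j$ lies on $Q_{ab}$, so in particular $x_T \in Q_{ab}$. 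Combined with the assumption that $Q_{ab}$ enters $\st{x_T}{x_{T_i}}$, the neighbor $w_i$ of $x_T$ on the way to $x_{T_i}$ also lies on $Q_{ab}$, hence $x_Tw_i \in Q_{ab}$.

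Now both $x_Tw_j$ and $x_Tw_i$ are 2-edges of $Q_{ab}$ incident on $x_T$, so the application of Path Rule~\ref{prul:2edgedeg3} on the pair $(x_T, P_{ab})$ falls into Case~1 and inserts into $P_{ab}$ the rainbow $w_j$-to-$w_i$ path guaranteed by Lemma~\ref{lem:2edgedeg3paths}; that path uses only colors from $(\{c_1,c_2,c_3\} \setminus \{c_i, c_j\}) \cup c(T)$ and in particular avoids $c_i$. Therefore the single representative of $x_Tw_j$ that this path rule places into $P_{ab}$ has color different from $c_i$. Because a 2-edge representative enters $P_{ab}$ only via Path Rule~\ref{prul:2edgedeg4} or Path Rule~\ref{prul:2edgedeg3} applied at its tree-vertex endpoint — and $x_T$ is the unique tree-vertex endpoint of $x_Tw_j$ — the edge $e$ itself, having colour $c_i$, is not in $P_{ab}$, contradicting the hypothesis.

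The only non-routine step, and the one I expect to double-check most carefully, is the bookkeeping inside Coloring Rule~\ref{rul:2edgedeg3}: one must rule out surplus colors appearing in Cases~2 and~3, and verify the ``$c_k$ never labels a representative of $x_Tw_k$'' property in Case~1. Once these are in hand, Cases~2 and~3 of Path Rule~\ref{prul:2edgedeg3} are automatically excluded (Case~2 needs exactly one incident 2-edge in $Q_{ab}$, and Case~3 needs $x_T$ to be an endpoint of $Q_{ab}$, both incompatible with having the two distinct 2-edges $x_Tw_j$ and $x_Tw_i$ incident at $x_T$), and the argument collapses into the same one-line contradiction used for Lemma~\ref{lem:2edgedeg4paths2}.
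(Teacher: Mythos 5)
Your proposal is correct and follows exactly the route the paper intends: the paper omits this proof, stating only that it is analogous to Lemma~\ref{lem:2edgedeg4paths2}, and your argument is precisely that analogue, deriving $T'\neq T$ from $x_{T'}=\ct{x_T}{w_i}$ and then contradicting $e\in P_{ab}$ via the color-avoidance guarantee of Lemma~\ref{lem:2edgedeg3paths} in Path Rule~\ref{prul:2edgedeg3}. The two bookkeeping checks you flag (surplus colors appear only in Case~1 of Coloring Rule~\ref{rul:2edgedeg3}, and $c_k$ never lands on a representative of $x_Tw_k$) are verified correctly and are exactly the details the paper leaves implicit.
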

\begin{lemma}
	\label{lem:inv2edgedeg3}
Invariant \ref{inv:part-rainbow} is not violated during 
	Path Rule~\ref{prul:2edgedeg3}.
\end{lemma}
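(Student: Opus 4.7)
My plan is to adapt the proof of Lemma~\ref{lem:inv2edgedeg4} to the present, more delicate setting. The new complication is that Coloring Rule~\ref{rul:2edgedeg3}, unlike Coloring Rule~\ref{rul:2edgedeg4}, may color representatives of the $2$-edges incident on $x_T$ with internal colors of $T$, so a conflict can arise in two qualitatively different ways rather than only one.

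Assume for contradiction that applying Path Rule~\ref{prul:2edgedeg3} on a pair $(x_T, P_{ab})$ places two edges $e, e' \in P_{ab}$ with $c(e) = c(e')$. Since Lemma~\ref{lem:2edgedeg3paths} guarantees that the freshly added subpath is itself rainbow, we may take $e$ to be one of the newly added edges and $e'$ to be an edge that was put into $P_{ab}$ by some strictly earlier rule. Then $e$ is either an internal edge of $T$ (so $c(e) \in c(T)$) or a representative of some $2$-edge $x_T w_i$. Inspecting the three cases of Coloring Rule~\ref{rul:2edgedeg3}, $c(e)$ is therefore either an internal color of $T$ or a surplus color $s(T_j)$ for some $j \in \{1,2,3\}$.

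Subcase~A, where $c(e) = s(T_j)$, mirrors the proof of Lemma~\ref{lem:inv2edgedeg4}. Invoking Lemma~\ref{lem:2edgedeg3paths2} in place of Lemma~\ref{lem:2edgedeg4paths2}, we get $T_j \neq T$ and $Q_{ab}$ does not intersect $\st{x_T}{x_{T_j}}$. Splitting $e'$ by its provenance (Path Rule~\ref{prul:1edge}, or an earlier application of Path Rule~\ref{prul:2edgedeg4} or Path Rule~\ref{prul:2edgedeg3} on some $x_{T'} \neq x_T$), the structural description of Coloring Rule~\ref{rul:1edge}, Coloring Rule~\ref{rul:2edgedeg4}, and Coloring Rule~\ref{rul:2edgedeg3} forces $h(e')$ to lie inside $\st{x_T}{x_{T_j}}$, contradicting Lemma~\ref{lem:2edgedeg3paths2}.

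Subcase~B, where $c(e)$ is an internal color of $T$, is the genuinely new case and is what I expect to be the main obstacle. By Invariant~\ref{inv:internalcolor}, just before Coloring Rule~\ref{rul:2edgedeg3} acted on $x_T$, the color $c(e)$ was present only on a single edge $\hat e \in E(T)$; and no coloring rule other than the present one can deposit an internal color of $T$ on any edge outside $E(T)$ (Coloring Rule~\ref{rul:1edge} and Coloring Rule~\ref{rul:2edgedeg4} use only surplus colors, while any application of Coloring Rule~\ref{rul:2edgedeg3} on $x_{T'} \neq x_T$ uses only colors in $c(T')$, which are disjoint from $c(T)$). Hence $e' = \hat e \in E(T)$. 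I then invoke Invariant~\ref{inv:internaledgespath} together with a case analysis on which of Cases~1--3 of Path Rule~\ref{prul:2edgedeg3} triggered the present step; in each case, the shape of $Q_{ab}$ around $x_T$ rules out that any earlier path rule could have already placed $\hat e$ in $P_{ab}$. Specifically, Path Rule~\ref{prul:forest} would force $a, b \in V(T)$ and hence $Q_{ab} = \{x_T\}$, incompatible with the presence of the $2$-edge(s) incident on $x_T$ in $Q_{ab}$ required by any of Cases~1--3; and Path Rule~\ref{prul:1edge} would require a $1$-edge of $Q_{ab}$ incident on $x_T$ which only fits Case~2 of Path Rule~\ref{prul:2edgedeg3}, at which point Invariant~\ref{inv:internaledgespath} is used to trace precisely which internal edges of $T$ could have arrived, yielding the desired contradiction.
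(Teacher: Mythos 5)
Your proposal follows essentially the same route as the paper's proof: reduce to a newly added edge $e$ and an older edge $e'$ of the same color, observe that $c(e)$ is either an internal color of $T$ or some surplus color $s(T_j)$, kill the internal-color case by showing that no edge of $E(T)$ could already have been in $P_{ab}$, and dispose of the surplus-color case exactly as in Lemma~\ref{lem:inv2edgedeg4}, with Lemma~\ref{lem:2edgedeg3paths2} replacing Lemma~\ref{lem:2edgedeg4paths2}. Your Subcase~B is in fact a more explicit justification of a step the paper compresses into the single assertion that $e'\notin E(T)\cup R$; Invariant~\ref{inv:internaledgespath} is exactly the right tool there, and the check against Cases~1--3 of Path Rule~\ref{prul:2edgedeg3} (no two $1$-edges at $x_T$ in $Q_{ab}$, and $a,b\in V(T)$ impossible) goes through.

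One inaccuracy in Subcase~A: when $e'$ was colored during Coloring Rule~\ref{rul:2edgedeg4} or~\ref{rul:2edgedeg3} applied at some $x_{T'}\neq x_T$, the contradiction is \emph{not} that $h(e')$ lands inside $\st{x_T}{x_{T_j}}$ --- on the contrary, $x_{T'}$ (on which $h(e')$ is incident) lies \emph{outside} that subtree precisely because $Q_{ab}$ avoids it. The actual contradiction, as in the paper's proof of Lemma~\ref{lem:inv2edgedeg4}, is that $x_T$ then separates $x_{T'}$ from $x_{T_j}$, so $x_{T_j}$ cannot equal $\ct{x_{T'}}{v}$ for any $v$, and hence $s(T_j)$ would never have been chosen as a color at $x_{T'}$ in the first place. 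The ``$h(e')$ lies inside the forbidden subtree'' mechanism is correct only for the provenance via Coloring Rule~\ref{rul:1edge}. Since you explicitly defer to the proof of Lemma~\ref{lem:inv2edgedeg4}, which handles both provenances correctly, this is a misstatement in your summary rather than a fatal gap, but it should be fixed before the argument is written out.
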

\begin{proof}
Suppose for the sake of contradiction that Invariant \ref{inv:part-rainbow} is violated during the application of Path Rule~\ref{prul:2edgedeg3} on the pair $(x_T,P_{ab})$ as above.
Then there exist edges $e$ and $e'$ in $P_{ab}$ having the same color.
We can assume without loss of generality that $e$ was colored during the application of Coloring Rule~\ref{rul:2edgedeg3} on $x_T$.
This means $e\in E':=E(T)\cup R$, where $R$ is defined as the set of representatives of  $w_1x_T,w_2x_T,$ and $w_3x_T$.
Since the application of Path Rule~\ref{prul:2edgedeg3} on $(x_T,P_{ab})$ added a rainbow path to $P_{ab}$,
the edge $e'$ was not added during this application and hence $e'\notin E'$.
Each color in $c(T)$ have been used only in $E'$ so far.
That means $c(e)=c(e')\notin c(T)$.
Hence $e\in E'\setminus E(T)=R$.
Without loss of generality assume that $e$ is a representative of $w_1x_T$.
Now, $c(e)=s(T_j)$ where $j\in \{2,3\}$.
Without loss of generality assume that $c(e)=s(T_2)$.
This also means $c(e')=s(T_2)$.
That means $e'$ was colored during Coloring Rules~\ref{rul:1edge}, \ref{rul:2edgedeg4} or \ref{rul:2edgedeg3}. 
Hence the following two cases are exhaustive and in each case we prove a contradiction.

\noindent {\bf Case 1:}\ $e'$ was colored during the application of Coloring Rules~\ref{rul:2edgedeg4} or~\ref{rul:2edgedeg3} on $x_{T'}$, for some tree $T'\neq T$.

Since $P_{ab}$ contains $e'$, we have that $Q_{ab}$ contains $h(e')$.
Since $e'$ was colored during the application of Coloring Rules~\ref{rul:2edgedeg4} or~\ref{rul:2edgedeg3} on $x_{T'}$, 
either $e'\in E(T')$ or $h(e')$ is incident on $x_{T'}$, and hence $x_{T'}$ is in $Q_{ab}$.
Since $Q_{ab}$ does not intersect $\st{x_T}{x_{T_2}}$ by Lemmas~\ref{lem:2edgedeg4paths2} and \ref{lem:2edgedeg3paths2}, 
we have that $x_{T'}$ is not in $\st{x_T}{x_{T_2}}$.
Then $\dist_{\und{B_1}}(x_{T'},x_{T})< \dist_{\und{B_1}}(x_{T'},x_{T_2})$. 
But then during the application of Coloring Rule~\ref{rul:2edgedeg4} or~\ref{rul:2edgedeg3} on $x_{T'}$, the color $s(T_2)$ would never be used as $x_{T_2}\neq \ct{x_{T'}}{v}$ for any vertex $v$. 
Thus, the color of $e'$ is not $s(T_2)$.
But
we know that $c(e')=c(e)=s(T_2)$, a contradiction.

\noindent {\bf Case 2:}\ $e'$ was colored during the application of Coloring Rule~\ref{prul:1edge}.

This means $e'$ is the representative of a $1$-edge. 
Since $e'$ is colored with $s(T_2)$, we have that $h(e')$ should either be the outgoing edge of $x_{T_2}$ or the outgoing edge of the parent of $x_{T_2}$, from Coloring Rule~\ref{rul:1edge}.
This implies that $h(e')$ is in $\st{x_T}{x_{T_2}}$,
as the parent of $x_{T_2}$ is a non-tree edge. 
But then $Q_{ab}$ does not contain $h(e')$ as $Q_{ab}$ does not intersect $\st{x_T}{x_{T_2}}$,
by Lemmas~\ref{lem:2edgedeg4paths2} and \ref{lem:2edgedeg3paths2}. 
Thus $P_{ab}$ does not contain $e'$, which is a contradiction.
\end{proof}

\begin{figure}[t]
    \centering
    \begin{subfigure}{0.33\textwidth}
    \centering
        \includegraphics[scale=0.9,keepaspectratio]{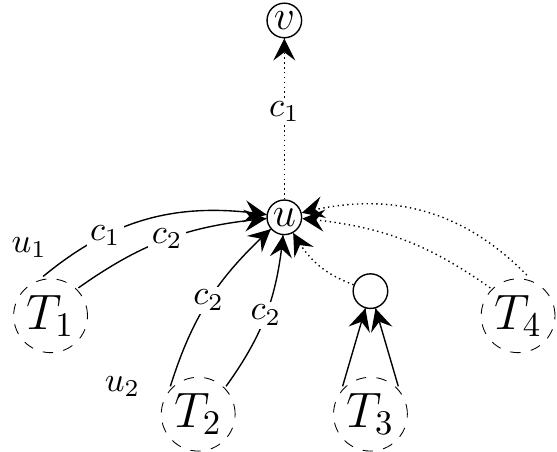}
        \caption{}
    \end{subfigure}%
    \hfill
    \begin{subfigure}{0.33\textwidth}
    \centering
		\includegraphics[scale=0.9,keepaspectratio]{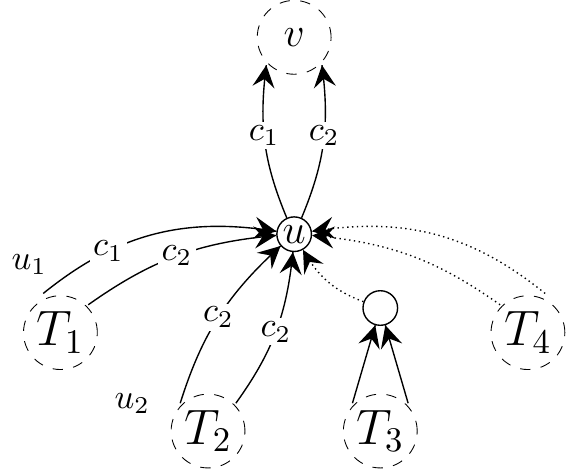}
		\caption{}
    \end{subfigure}%
    \hfill
    \begin{subfigure}{0.33\textwidth}
    \centering
		\includegraphics[scale=0.9,keepaspectratio]{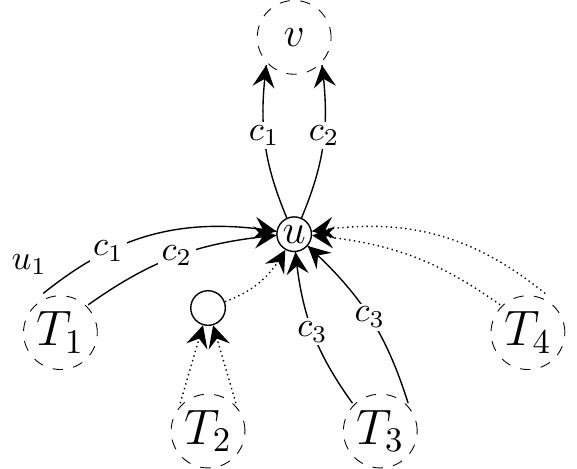}
		\caption{}
    \end{subfigure}
	\caption{Three examples of Coloring Rule~\ref{rul:nontreedeg3}. Here $c_i=s(T_i)$. The edges that were colored before the application of the rule are drawn as densely dotted lines.}
	\label{fig:nontreedeg3-all}
\end{figure}


\begin{rul}
	\label{rul:nontreedeg3}
	For each non-tree vertex $u$ with degree at least $3$ in $B_1$ (see Figure~\ref{fig:nontreedeg3-all}),
	let $q$ be the number of children of $u$ (note that $q\ge 2$ as degree of $u$ is at least $3$), 
	let $u_1$, $u_2, \ldots, u_q$ be the children of $u$
	and let $x_{T_i}$ be $\ct{u}{u_i}$.
	Let $\dir{uv}$ be the outgoing edge from $u$ in $B_1$.
	If $uv$ is a $1$-edge,
	due to Coloring Rule~\ref{rul:1edge}, we know that there exist an $i\in[q]$
	such that
	$u_i$ is a tree vertex (and hence $T_i=f_{\calT}(u_i))$, 
	and $uv$ is colored with $s(T_i)$.
	Hence, if $uv$ is a $1$-edge, assume without loss of generality that 
	$u_1$ is a tree vertex 
	$($and hence $x_{T_1}=u_1) $
	and that $uv$ is colored with $s(T_1)$.
	
	\begin{itemize}

		\item If $u_1u$ is uncolored (then $u_1u$ is a 2-edge due to Coloring Rule~\ref{rul:1edge}, implying that $u_1$ is a tree vertex and hence $T_1=f_{\calT}(u_1)$), 
	then color 
	$(u_1u)_1$ with $s(T_1)$ and
	$(u_1u)_2$ with $s(T_2)$.

\item For each $2\le i\le q$, if $u_iu$ is uncolored (then $u_iu$ is a 2-edge due to Coloring Rule~\ref{rul:1edge}, implying that $u_i$ is a tree vertex and hence $T_i=f_{\calT}(u_i)$), 
	then color both its representatives with $s(T_i)$. 
\item
	Let $\dir{uv}$ be the outgoing edge from $u$.
	If $uv$ is uncolored (in which case it is a 2-edge due to Coloring Rule~\ref{rul:1edge})
	then color $(uv)_1$ with $s(T_1)$ and $(uv)_2$ with $s(T_2)$.
	\end{itemize}
\end{rul}

\begin{prul}
	\label{prul:nontreedeg3}
	For each non-tree vertex $u$ on which Coloring Rule~\ref{rul:nontreedeg3} has been applied as above and for each $P_{ab}$ such that $Q_{ab}$ contains $u$ (we say that the rule is being applied on the pair $\left( u,P_{ab} \right))$, execute the following two parts (in the mentioned order).

	\noindent {\bf Part 1}\
	\begin{itemize}
	
	\item If $Q_{ab}$ contains edge $u_1u$ and $u_1u$ is colored during the application of Coloring Rule~\ref{rul:nontreedeg3} on $u$, do the following.
	If the other neighbor (if any) of $u$ in $Q_{ab}$ is $u_2$, then add $(u_1u)_1$ (which has color $s(T_1))$ to $P_{ab}$.
	Otherwise, add $(u_1u)_2$ (which has color $s(T_2)$) to $P_{ab}$. 

	\item For each $i\in [2,q]$, if $Q_{ab}$ contains edge $u_iu$ and $u_iu$ is colored during the application of Coloring Rule~\ref{rul:nontreedeg3} on $u$,
		add $(u_iu)_1$ (which has color $s(T_i)$) to $P_{ab}$.

	\item If $Q_{ab}$ contains edge $uv$ and $uv$ is colored during the application of Coloring Rule~\ref{rul:nontreedeg3} on $u$:
	if the other neighbor (if any) of $u$ in $Q_{ab}$ is $u_1$ and $u_1u$ is a $1$-edge, then add $(uv)_2$ (which has color $s(T_2)$) to $P_{ab}$;
	otherwise add $(uv)_1$ (which has color $s(T_1)$) to $P_{ab}$.
	\end{itemize}
	\noindent {\bf Part 2}\
	\begin{itemize}
	\item For each tree vertex $x_T$ such that the degree of $x_T$ in $h(P_{ab})$ became $2$ during the addition of above edges in Part 1,
	let $x$ and $y$ be the endpoints in $T$ of the two edges of $P_{ab}$ incident on $T$.
	Add $T_{xy}$ to $P_{ab}$.

	\item For each tree vertex $x_T\in \left\{ h(a),h(b) \right\}$ such that the degree of $x_T$ in $h(P_{ab})$ became $1$ during the addition of above edges in Part~1, let $x$ be the endpoint in $T$ of the edge of $P_{ab}$ incident on $T$.
	If $x_T=h(a)$, add $T_{ax}$ to $P_{ab}$; otherwise (i.e., if $x_T=h(b)$), add $T_{bx}$ to $P_{ab}$.
	\end{itemize}
\end{prul}
\begin{lemma}
	\label{lem:invnontreedeg3}
Invariant \ref{inv:part-rainbow} is not violated during 
	Path Rule~\ref{prul:nontreedeg3}.
\end{lemma}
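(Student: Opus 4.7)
My plan is to follow the contradiction-based template of Lemmas~\ref{lem:inv2edgedeg4} and~\ref{lem:inv2edgedeg3}. Suppose two edges $e,e'\in P_{ab}$ share a color after the application of Path Rule~\ref{prul:nontreedeg3} on $(u,P_{ab})$, with $e$ added by the current application and $e'$ added earlier. I would first check that the current application is internally rainbow. Part~1 adds at most one representative for each of the (at most two) edges of $Q_{ab}$ incident to $u$, and the branching prescription on which representative of $u_1u$ and of $uv$ to select is specifically designed so that $s(T_1)$ and $s(T_2)$ are not placed twice; a finite case check over the ordered pair of neighbors of $u$ in $Q_{ab}$, taking into account that a 1-edge $uv$ or $u_1u$ was already colored and its representative added by Path Rule~\ref{prul:1edge}, verifies this. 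Part~2 only adds rainbow tree-paths $T_{xy}$ (or $T_{ax}$/$T_{bx}$), whose colors lie in $c(\calF)$ and are hence disjoint from the surplus colors used in Part~1. Therefore $c(e)=c(e')=s(T_j)$ for some $j\in[q]$, and $e'$ was colored by one of Coloring Rules~\ref{rul:1edge}, \ref{rul:2edgedeg4}, \ref{rul:2edgedeg3}, or by a prior application of~\ref{rul:nontreedeg3}.

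The crucial ingredient is a structural lemma in the spirit of Lemmas~\ref{lem:2edgedeg4paths2} and~\ref{lem:2edgedeg3paths2}: if $e$ with color $s(T_j)$ was added to $P_{ab}$ by Path Rule~\ref{prul:nontreedeg3} on $u$, then $Q_{ab}$ does not intersect $\st{u}{x_{T_j}}$ beyond the entry edge at $u$. For $j\geq 3$ this is transparent, since $s(T_j)$ decorates only the two representatives of $u_ju$; Path Rule~\ref{prul:nontreedeg3} adds one of them precisely when $u_j$ is a neighbor of $u$ in $Q_{ab}$, and $Q_{ab}$ cannot continue past $x_{T_j}=\ct{u}{u_j}$ without triggering a reuse of $s(T_j)$ elsewhere. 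For $j\in\{1,2\}$ the branching in Part~1 on which representative of $u_1u$ (and of $uv$) to use fixes precisely which side of $Q_{ab}$ is responsible for the appearance of $s(T_j)$, and an analogous argument localizes $x_{T_j}$ to that side.

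With this lemma in hand, I would dispatch the cases on the origin of $e'$ in close parallel to Lemmas~\ref{lem:inv2edgedeg4} and~\ref{lem:inv2edgedeg3}. If $e'$ comes from Rule~\ref{rul:1edge}, then $h(e')$ is the outgoing edge of $x_{T_j}$ or of its (non-tree) parent in $B$, placing $h(e')$ in $\st{u}{x_{T_j}}$; but $h(e')\in Q_{ab}$ contradicts the structural lemma. If $e'$ comes from Rule~\ref{rul:2edgedeg4} or~\ref{rul:2edgedeg3} on some tree vertex $x_{T^*}$, Lemmas~\ref{lem:2edgedeg4paths2} and~\ref{lem:2edgedeg3paths2} force $Q_{ab}$ to avoid $\st{x_{T^*}}{x_{T_j}}$, while the new structural lemma forces it to avoid $\st{u}{x_{T_j}}$; since $Q_{ab}$ is a simple path through both $u$ and $x_{T^*}$, these constraints jointly prevent any path from reaching $x_{T_j}$, a contradiction. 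The case that $e'$ originates from another application of Rule~\ref{rul:nontreedeg3} at some $u^*\neq u$ is handled by invoking the structural lemma symmetrically at $u$ and at $u^*$. The main obstacle is the structural lemma itself: the duplication of $s(T_1)$ and $s(T_2)$ across multiple edges at $u$, together with the case analysis on whether $u_1u$ and $uv$ are 1-edges or 2-edges (and the interactions with pre-existing colorings from Path Rule~\ref{prul:1edge}), makes it noticeably more delicate than its predecessors for tree vertices; once this lemma is in place, the rest mirrors the earlier invariant-preservation arguments with only cosmetic changes.
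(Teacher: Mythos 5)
The overall template (contradiction, case analysis on which rule colored the clashing edge $e'$, analogy with Lemmas~\ref{lem:inv2edgedeg4} and~\ref{lem:inv2edgedeg3}) matches the paper, but your proof hinges on a ``structural lemma'' that you never prove, and both its statement and its intended applications have concrete problems. As stated without qualification the lemma is false: when $(u_ju)_1$ with color $s(T_j)$ is added, $Q_{ab}$ contains $u_j$ and therefore does meet $\st{u}{x_{T_j}}$; and your justification for the qualified version --- ``$Q_{ab}$ cannot continue past $x_{T_j}$ without triggering a reuse of $s(T_j)$'' --- is circular, since the absence of such a reuse is exactly what the whole lemma is supposed to establish. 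The dispatch of the Rule~\ref{rul:1edge} case is also wrong as written: you claim $h(e')$ lies in $\st{u}{x_{T_j}}$, but when $x_{T_j}$ is a child of $u$ the ``outgoing edge of the parent of $x_{T_j}$'' is $uv$, the outgoing edge of $u$ itself, which is not inside that subtree. The paper must treat $h(e')=uv$ as a separate sub-case (its Case~3.1) and resolves it by noting that Path Rule~\ref{prul:nontreedeg3} adds $(u_1u)_2$ rather than $(u_1u)_1$ precisely when $uv\in Q_{ab}$. A smaller gap: for Part~2 you only argue disjointness from the surplus colors added in Part~1, but you must also exclude a clash with an \emph{older} edge of $P_{ab}$ carrying an internal color of $T$; this requires observing that $x_T$ was incomplete before the rule and invoking Invariant~\ref{inv:internalcolor}.

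For comparison, the paper gets by without any new structural lemma. For a clash between two applications of Coloring Rule~\ref{rul:nontreedeg3}, it uses that both $u$ and $u'$ are ancestors of $x_{T_i}$ with no intervening tree vertex to conclude that $u$ has no tree-vertex children, hence $h(e)=uv$, and then finishes by the representative-selection prescriptions of the path rule. For a clash with Rules~\ref{rul:2edgedeg4} or~\ref{rul:2edgedeg3} it needs only the already-proven Lemmas~\ref{lem:2edgedeg4paths2} and~\ref{lem:2edgedeg3paths2} to conclude $u\notin Q_{ab}$, contradicting $e$ being incident on $u$. If you want to salvage your plan, you would have to state the localization claim separately for each index $j$ and each representative actually added (the claim differs for $(u_1u)_1$, $(u_1u)_2$, $(u_iu)_1$ with $i\ge2$, $(uv)_1$, and $(uv)_2$), at which point you have essentially reconstructed the paper's case analysis rather than abstracted it.
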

\begin{proof}
	Suppose Invariant \ref{inv:part-rainbow} is violated during the application of Path Rule~\ref{prul:nontreedeg3} on the pair $(u,P_{ab})$ as above.
Then there exist edges $e$ and $e'$ in $P_{ab}$ having the same color.
We can assume without loss of generality that $e$ was colored during the application of Coloring Rule~\ref{rul:nontreedeg3} on $u$.
Suppose $e$ was added during Part 2 of Path Rule~\ref{prul:nontreedeg3}. 
Observe that if we add a path inside a tree $T$ in Part 2, then $x_T$ was \emph{incomplete} before the application of Coloring Rule~\ref{rul:nontreedeg3}. 
By Invariant~\ref{inv:internalcolor}, this implies that 
the internal colors of $T$ were not used anywhere else so far. 
Thus, the color of $e$ is unique, in particular $c(e')\neq c(e)$, a contradiction.
Thus, the edge $e$ was not added during Part 2. 
Then $e$ was added during Part 1 and hence $c(e)=c(e')=s(T_i)$ for some $i\in [q]$.
Then $e'$ was colored during one of Coloring Rules \ref{rul:nontreedeg3}, \ref{rul:2edgedeg3}, \ref{rul:2edgedeg4}, or \ref{rul:1edge}.

\noindent {\bf Case 1:}\ $e'$ was colored during the Coloring Rule~\ref{rul:nontreedeg3}.

Note that during the application of Path Rule~\ref{prul:nontreedeg3} on $(u,P_{ab})$, 
we have added at most two edges to $P_{ab}$.
And, if we have added two edges, they are of different color.
Thus $e'$ was not added to $P_{ab}$ during the application of Path Rule~\ref{prul:nontreedeg3} on $(u,P_{ab})$ and 
hence was not colored during the application of Coloring Rule~\ref{rul:nontreedeg3} on $u$.
So, $e'$ was colored during the application of Coloring Rule~\ref{rul:nontreedeg3} on some non-tree vertex $u'\neq u$.
Notice that for any tree $T\in \calT$, $s(T)$ is used during the application of Coloring Rule~\ref{rul:nontreedeg3} only when the rule is applied to an ancestor of $x_T$ in $B_1$.
Hence, both $u$ and $u'$ are ancestors of $x_{T_i}$.
Without loss of generality, assume that $u'$ is closer than $u$ to $x_{T_i}$.
Then, $u$ cannot have any tree vertices as children because otherwise $x_{T_i}\neq \ct{u}{u_i}$. 
Then, the only edges colored during the application of Coloring Rule~\ref{rul:nontreedeg3} on $u$, are the representatives of $uv$. 
Thus $h(e)=uv$.

\noindent {\bf Case 1.1}\ $e=(uv)_2$.

We know that $e=(uv)_2$ is colored with $s(T_2)$ by Coloring Rule~\ref{rul:nontreedeg3}.
Thus, $c(e')=c(e)=s(T_2)$ and $T_i=T_2$.
Since the edge $(uv)_2$ is added during application of Path Rule~\ref{prul:nontreedeg3} on $(u,P_{ab})$, 
the neighbors of $u$ in $Q_{ab}$ are $v$ and $u_1$, by Path Rule~\ref{prul:nontreedeg3}. 
Since $u'\in Q_{ab},$ we have that $u'$ is a descendant of $u_1$ and not $u_2$ in $B_1$. 
This implies $T_i=T_1\neq T_2$, 
a contradiction.

\noindent {\bf Case 1.2} $e=(uv)_1$.

Since the edge $(uv)_1$ is added during application of Path Rule~\ref{prul:nontreedeg3} on $(u,P_{ab})$, 
either $u_1$ is not a neighbor of $u$ in $Q_{ab}$, or $uu_1$ is a 2-edge, by Path Rule~\ref{prul:nontreedeg3}.
But $uu_1$ cannot be a 2-edge as both $u$ and $u_1$ are non-tree vertices. (Recall that we said all children of $u$ are non-tree vertices in Case 1).
Hence $u_1$ is not a neighbor of $u$ in $Q_{ab}$.
Since $u'\in Q_{ab},$ this implies that $T_i\neq T_1$, and hence $c(e)=c(e')=s(T_i)\neq s(T_1)$.
But we know that $e=(uv)_1$ is colored with $s(T_1)$, by Coloring Rule~\ref{rul:nontreedeg3}.
Thus, we have a contradiction.

\noindent {\bf Case 2:}\ $e'$ was colored during the Coloring Rules~\ref{rul:2edgedeg3} or \ref{rul:2edgedeg4}.

Let $T'$ be the tree on which $e'$ is incident.
Then $e'$ was colored with $s(T_i)$ during the application of Coloring Rules~\ref{rul:2edgedeg3} or \ref{rul:2edgedeg4} on $x_{T'}$. 
Then $Q_{ab}$ does not intersect $\st{T'}{T_i}$ due to Lemmas~\ref{lem:2edgedeg3paths2} and \ref{lem:2edgedeg4paths2}.
Since $x_{T_i}=\ct{u}{u_i}$, there is no other tree-vertex in the path from $u$ to $x_{T_i}$. 
Thus, $u$ is in $\st{T'}{T_{i}}$. 
Hence, we have that $u$ is not in $Q_{ab}$.
We know that $e$ is adjacent on $u$ as every edge colored during the application of Coloring Rule~\ref{rul:nontreedeg3} on $u$ is incident on $u$.
But then $e\notin P_{ab}$ as $u$ is not in $Q_{ab}$.
This is a contradiction.

\noindent {\bf Case 3:}\ $e'$ was colored during the Coloring Rule~\ref{rul:1edge}.

This means that $e'$ is a $1$-edge.

\noindent {\bf Case 3.1}\ $h(e')=uv$.

In the case when $uv$ is a $1$-edge, we selected $u_1$ during Coloring Rule~\ref{rul:nontreedeg3} in such a way that $c(uv)=s(T_1)$.
Thus $c(e)=c(e'=uv)=s(T_1)$.
The only edges that can be potentially colored with $s(T_1)$ during the application of Coloring Rule~\ref{rul:nontreedeg3} on $u$ are $(uv)_1$ and $(uu_1)_1$.
Since $e$ and $e'$ are distinct
we have $e=(u_1u)_1$.
But since $uv$ is in $Q_{ab}$, 
we would have added $(u_1u)_2$ and not $(u_1u)_1$ to $P_{ab}$ during Path Rule~\ref{prul:nontreedeg3}.
Thus we have a contradiction.

\noindent {\bf Case 3.2.}\ $h(e')\neq uv$.

Then $e'$ is on the path between $x_{T_i}$ and $u$. 
Also, $x_{T_i}$ is not a child of $u$. 
Then, the only possibility for $e$ to have color $s(T_i)$ is if $i=2$ and $e=(u_1u)_2$.
Then $Q_{ab}$ contains both $u_1$ and $u_2$.
In that case, 
we would have added $(u_1u)_1$ and not $(u_1u)_2$ to $P_{ab}$ during Path Rule~\ref{prul:nontreedeg3}.
Hence, $e\neq (u_1u)_2$, a contradiction.
\end{proof}
\begin{rul}
	\label{rul:treevertex2deg1}
For each incomplete tree vertex $x_T$ having 2-edge degree exactly $1$: let $e$ be the only 2-edge incident on $x_T$, 
pick an edge $e_1$ in the foot-path of $e$, 
color the representatives of $e$ with the color of $e_1$. 
\end{rul}
\begin{prul}
	\label{prul:treevertex2deg1}
	For each tree vertex $x_T$ on which Coloring Rule~\ref{rul:treevertex2deg1} has been applied as above and for each $P_{ab}$ such that $Q_{ab}$ contains $h(e)$ (we say that the path rule is being applied on the pair $\left( x_T,P_{ab} \right)$), do the following.

	We pick vertex $w$ as follows. If $a\in V(T)$, let $w:=a$, and if $b\in V(T)$ let $w:=b$.
	(Note that both $a$ and $b$ cannot be in $T$ as $Q_{ab}$ contains $h(e)$).
	If $a,b\notin V(T)$ then there is an edge $e_2\neq e$ of $Q_{ab}$ incident on $x_T$.
	Furthermore, since $e$ is the only 2-edge incident on $x_T$, the edge $e_2$ is a $1$-edge.
	In this case, let $w$ be the endpoint of $(e_2)_1$ in $T$.

	By Observation~\ref{obs:edgeexclusion}, there is a path in $T$ that excludes $e_1$, from $w$ to one of the foots of $e$.
	Let this foot be $z$.
	Add the path in $T$ between $w$ and $z$ to $P_{ab}$.
	Also add to $P_{ab}$ the representative of $e$ having $z$ as its endpoint in $T$.
\end{prul}
\begin{lemma}
	\label{lem:invtreevertex2deg1}
Invariant \ref{inv:part-rainbow} is not violated during 
	Path Rule~\ref{prul:treevertex2deg1}.
\end{lemma}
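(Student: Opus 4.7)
The plan is to show that (i) all edges added to $P_{ab}$ during the current path rule have colors drawn from $c(T)$, and (ii) before this path rule runs, no edge of $P_{ab}$ has a color in $c(T)$. These two facts, combined with the distinctness of the newly added colors, give the invariant.

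For (i), observe that the path rule adds only internal edges of $T$ (those on $T_{wz}$) and the representative of $e$ with endpoint $z$. The edges of $T_{wz}$ were colored in Coloring Rule~\ref{rul:forest} with distinct colors from $c(T)$, and the representative of $e$ was just colored $c(e_1)$, which is also in $c(T)$. Moreover, by the choice of $z$ via Observation~\ref{obs:edgeexclusion}, the path $T_{wz}$ avoids $e_1$, so $c(e_1)$ does not appear on $T_{wz}$. Since all edges of $\calF$ received distinct colors, the newly added edges pairwise carry distinct colors.

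For (ii), I would argue that $x_T$ is incomplete immediately before Coloring Rule~\ref{rul:treevertex2deg1} is applied (this is the very hypothesis of the rule). By Invariant~\ref{inv:internalcolor}, at that moment no edge outside $E(T)$ has a color in $c(T)$. Next I would rule out that internal edges of $T$ are already in $P_{ab}$: by Invariant~\ref{inv:internaledgespath}, if any such internal edge were in $P_{ab}$, then a representative of every edge of $Q_{ab}$ incident on $x_T$ would also lie in $P_{ab}$. But the 2-edge $e$ is incident on $x_T$ in $Q_{ab}$ (by hypothesis $Q_{ab}$ contains $h(e)$) and its representatives were uncolored before Coloring Rule~\ref{rul:treevertex2deg1}, so no representative of $e$ could have been added to $P_{ab}$ yet. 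This contradicts the required condition, so indeed no internal edge of $T$ is in $P_{ab}$ prior to the current path rule. Together with Invariant~\ref{inv:internalcolor}, this means no edge in $P_{ab}$ has a color in $c(T)$.

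Combining (i) and (ii), the edges newly added during Path Rule~\ref{prul:treevertex2deg1} use only colors from $c(T)$, these colors are pairwise distinct, and they do not coincide with the color of any edge already in $P_{ab}$. Hence Invariant~\ref{inv:part-rainbow} is preserved. The only delicate point I foresee is justifying the application of Invariant~\ref{inv:internalcolor} at the right moment: one should invoke it at the state just prior to Coloring Rule~\ref{rul:treevertex2deg1}, since after that rule the representatives of $e$ acquire a color from $c(T)$, which is exactly the intended (benign) overlap that the path rule is designed to exploit.
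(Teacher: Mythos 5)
Your proof is correct and takes essentially the same approach as the paper's: both show that the newly added edges carry pairwise distinct colors from $c(T)$ (using the choice of $z$ so that $T_{wz}$ avoids $e_1$), and both combine Invariant~\ref{inv:internaledgespath} (to exclude internal edges of $T$ from the prior $P_{ab}$, since no representative of $e$ was added yet) with Invariant~\ref{inv:internalcolor} applied to the incomplete vertex $x_T$ (to exclude outside edges with colors in $c(T)$). The only difference is presentational — the paper argues by contradiction, picking a clashing edge $d$ and deducing that $x_T$ would have been complete, while you argue the two facts directly.
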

\begin{proof}
	Let $E_N$ be the set of new edges
	added to $P_{ab}$ during the application of Path Rule~\ref{prul:treevertex2deg1} to $(x_T,P_{ab})$ and let $E_O$ be the set of already included edges in $P_{ab}$ before this application.
	Suppose Invariant \ref{inv:part-rainbow} is violated for the sake of contradiction.
	Then either there are two edges in $E_N$ with the same color or $c(E_N)\cap c(E_O)\neq \emptyset$.
	Recall that $E_N$ consists of $E(T_{wz})$ and a representative of $e$, say $(e)_j$.
	Note that $c((e)_j)=c(e_1)$ by Coloring Rule~\ref{rul:treevertex2deg1}. 
	So, all the  edges in $E_N$ are colored from $c(T)$, the internal colors of $T$.
	Recall that $e_1$ is not in $T_{wz}$  by our choice of $z$. 
	Thus the edges in $E_N$ all have distinct colors.
	So, it has to be the case that $c(E_O)\cap c(E_N)\neq \emptyset$.
	Since $c(E_N)\subseteq c(T)$, this implies that $c(E_O)\cap c(T)\neq \emptyset$.
	Let $d$ be an edge in $E_O$ with color in $c(T)$.
	Since the representative of at least one edge of $Q_{ab}$ incident on $x_T$ (namely $e$) was not added to $P_{ab}$ before the application of Path Rule~\ref{prul:treevertex2deg1}, we have that $E(T)\cap E_O=\emptyset$ by Invariant~\ref{inv:internaledgespath}.
	Thus $d\notin E(T)$ but has color in $c(E(T))$ and $d\in E_O$.
	By Invariant~\ref{inv:internalcolor}, this implies that $x_T$ was complete before the application of Coloring rule~\ref{rul:treevertex2deg1}, making the rule not applicable on $x_T$, a contradiction.
\end{proof}

\begin{figure}[t]
    \centering
    \begin{subfigure}{0.49\textwidth}
    \centering
        \includegraphics[scale=1.00,keepaspectratio]{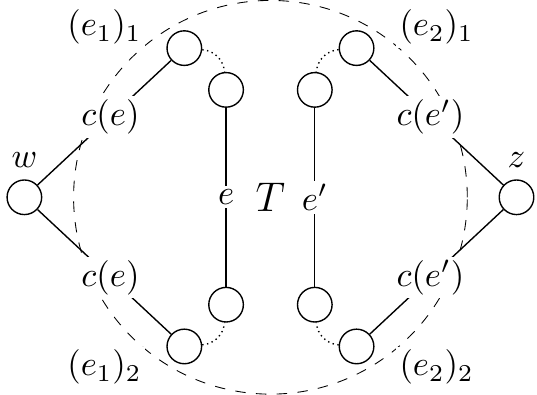}
        \caption{}
    \end{subfigure}%
    \hfill
    \begin{subfigure}{0.49\textwidth}
    \centering
		\includegraphics[scale=1.00,keepaspectratio]{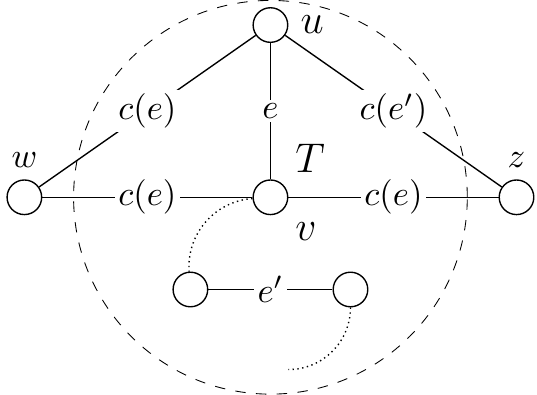}
		\caption{}
    \end{subfigure}
    \caption{Coloring Rule~\ref{rul:treevertex2deg2}. \textbf{(a)} Case~1 \textbf{(b)} Case~2.}
	\label{fig:tree2deg2-all}
\end{figure}

\begin{rul}
	\label{rul:treevertex2deg2}
	For each tree vertex $x_T$ such that the 2-edge degree of $x_T$ is exactly $2$ and $E(T)$ contains at least $2$ edges,
	let $e_1$ and $e_2$ be the 2-edges incident on $x_T$,
	let $w$ and $z$ be the other endpoints of $e_1$ and $e_2$, respectively, and
	let $P_1$ and $P_2$ be the foot-paths of $e_1$ and $e_2$, respectively.
	
	\noindent {\bf Case 1:}\ $|E(P_1\cup P_2)|\ge 2$. (See Figure~\ref{fig:tree2deg2-all}~(a)).
	
	Pick distinct edges $e$ and $e'$ from $P_1$ and $P_2$ respectively.
	If $(e_1)_1$ and $(e_1)_2$ are uncolored, color them with color of $e$ and 
	if $(e_2)_1$ and $(e_2)_2$ are uncolored, color them with color of $e'$.
	
	\noindent {\bf Case 2:}\ Case 1 does not hold. (See Figure~\ref{fig:tree2deg2-all}~(b)).
	
	Clearly, $P_1$ and $P_2$ both are a single edge and they are the same edge.
	Let this edge be $e=uv$.
	Pick any other edge $e'$ in $T$ (such an edge exists because we said that the rule is applicable only if $E(T)$ contains at least two edges).
	Without loss of generality, assume that $e'$ is closer to $v$ than $u$ in $T$.
	If $uw$ and $vw$ are uncolored, color them with color of $e$. 
	If $uz$ and $vz$ are uncolored, color them with colors of $e'$ and $e$, respectively.
\end{rul}

\begin{lemma}
	\label{lem:treevertex2deg2}
	Consider a tree vertex $x_T$ on which Coloring Rule~\ref{rul:treevertex2deg2} has been applied as above.
	There is a rainbow path in $G$ from $w$ to $z$ using only the colors in $c(T)$.
	Also, from any vertex $x\in V(T)$, there is a rainbow path to both $w$ and $z$
	using only the colors in $c(T)$.
\end{lemma}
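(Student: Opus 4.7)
The plan is to verify the lemma by analysing the two cases of Coloring Rule~\ref{rul:treevertex2deg2} separately. In both cases the only non-tree edges available to us at $x_T$ are the representatives of $e_1$ and $e_2$, so every path we build will have the form $w\,y_1\,T_{y_1y_2}\,y_2\,z$ (or, for the second part of the lemma, $x\,T_{xy_1}\,y_1\,w$ and $x\,T_{xy_2}\,y_2\,z$), where $y_1$ is a foot of $e_1$ and $y_2$ is a foot of $e_2$. Because the representatives of $e_1,e_2$ have been colored with the colors of $e$ and $e'$ (internal edges of $T$), the only way such a path can fail to be rainbow is if the interior tree-path reuses a color $c(e)$ or $c(e')$, i.e.\ if it traverses the edge $e$ or the edge $e'$. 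The entire argument therefore reduces to: for each required endpoint pair, choose the foot(s) so that the induced tree-path avoids the relevant edge(s).

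For Case~1 the crux is to pick $y_1\in\{u_1,v_1\}$ and $y_2\in\{u_2,v_2\}$ so that $T_{y_1y_2}$ avoids both $e$ and $e'$. The structural input I would use is that, since $e\neq e'$, removing $\{e,e'\}$ from $T$ leaves exactly three connected components, and the conditions $e\in T_{u_1v_1}$ and $e'\in T_{u_2v_2}$ place $u_1,v_1$ on different sides of $e$ and $u_2,v_2$ on different sides of $e'$. A short enumeration of the (at most four) possible distributions of $u_1,v_1,u_2,v_2$ among the three components shows that at least one of the four candidate pairs $(y_1,y_2)$ lies in a common component of $T-\{e,e'\}$; for such a pair $T_{y_1y_2}$ avoids both distinguished edges, and the resulting path uses only colors in $c(T)$. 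The paths from an arbitrary $x\in V(T)$ to $w$ or $z$ are easier: one application of Observation~\ref{obs:edgeexclusion} to $\{x,u_1,v_1\}$ with edge $e$ produces a foot $y_1$ with $e\notin T_{xy_1}$, giving a rainbow $x\to w$ path; symmetrically for $z$ using $e'$.

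For Case~2 the foot-paths degenerate to the single edge $e=uv$, so the four foots collapse to $\{u,v\}$ and the long-tree-path trick from Case~1 is unavailable (any tree detour would cross $e$, whose color is reused). Instead, the rainbow $w$-to-$z$ path is the two-edge path $w\,u\,z$, whose colors are $c(e)$ and $c(e')$ and hence distinct. A path from any $x\in V(T)$ to $w$ is again produced by Observation~\ref{obs:edgeexclusion} applied to $\{x,u,v\}$ with $e$. The only genuinely delicate piece is an $x$-to-$z$ path: here I would split on which of the two components $A_u, A_v$ of $T-e$ contains $x$. If $x\in A_u$, the path $x\,T_{xu}\,u\,z$ ends with color $c(e')$, and since $e'$ lies in $A_v$ (by the ``$e'$ closer to $v$'' rule in the coloring), $T_{xu}$ stays inside $A_u$ and so avoids both $e$ and $e'$. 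If $x\in A_v$, the path $x\,T_{xv}\,v\,z$ ends with color $c(e)$, and $T_{xv}$ stays inside $A_v$, avoiding $e$. The main obstacle I expect is the three-component bookkeeping in Case~1; everything else reduces to a direct application of Observation~\ref{obs:edgeexclusion}.
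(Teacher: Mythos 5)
Your proposal is correct, and its overall architecture coincides with the paper's: the same case split inherited from Coloring Rule~\ref{rul:treevertex2deg2}, the same two-edge path $wuz$ in Case~2, and the same use of Observation~\ref{obs:edgeexclusion} for the paths from an arbitrary $x\in V(T)$ (including the split on the two sides of $e$ for the $x$-to-$z$ path in Case~2). The one place where you genuinely diverge is the heart of Case~1, namely the claim that some foot of $e_1$ and some foot of $e_2$ are joined by a tree path avoiding both $e$ and $e'$. The paper proves this by contradiction: assuming all four foot-to-foot paths hit $e$ or $e'$, it walks along $T_{w_1z_1}$, identifies the last vertex lying on the foot-path $P_1$, and propagates non-containment of $e$ and then of $e'$ through two further paths to reach a contradiction. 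You instead argue directly: $T-\{e,e'\}$ has exactly three components, each foot-pair is split by its distinguished edge, and a four-case enumeration of where the feet can land shows two feet must share a component, whence the connecting path avoids both edges. I checked the enumeration and it is exhaustive and correct (including the degenerate cases where $e$ and $e'$ share an endpoint or a foot coincides with an endpoint of $e$ or $e'$). Your version is arguably cleaner and more transparent than the paper's contradiction argument; it buys a constructive identification of the good foot-pair, at the cost of a small amount of explicit bookkeeping that you should write out rather than leave as ``a short enumeration.'' No gaps otherwise.
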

\begin{proof}
	If Case 2 of Coloring Rule \ref{rul:treevertex2deg2} has been applied then this is rather easy to see as follows.
	The path $wuz$ is a rainbow path from $w$ to $z$.
	Also for the second statement, note that at least one of $T_{xu}$ or $T_{xv}$ avoids $e$. Then at least one of the paths $T_{xu}$ followed by $uw$ or $uz$, or $T_{xv}$ followed by $vw$ or $vz$, is a rainbow path (it avoids $e$ and if it contains $e'$, then it is the second case and $c(e')$ is used only once).

	So, it only remains to prove the lemma when Case 1 of Coloring Rule \ref{rul:treevertex2deg2} is applied.
	Let $w_1,w_2$ be the foots of $e_1$ and $z_1,z_2$ be the foots of $e_2$.
	To prove the first statement, it is sufficient to prove that at least one of the four paths $T_{w_1z_1},T_{w_1z_2},T_{w_2z_1}$ and $T_{w_2z_2}$ contains neither $e$ nor $e'$.
	Given this, it is easy to show the necessary rainbow path from $w$ to $z$: if the path $T_{w_iz_j}$ contains neither $e$ nor $e'$ then the path $ww_iT_{w_iz_j}z_jz$ is the required path. So for the sake of contradiction assume that each $T_{w_iz_j}$ contains either $e$ or $e'$.
	Without loss of generality assume that $T_{w_1z_1}$ contains $e$. 
	Let $y$ be the last vertex on $T_{w_1z_1}$ that is in $P_1$ (while going from $w_1$ to $z_1$).
	Now, $T_{yz_1}$ and $T_{yw_2}$ do not contain $e$ and hence $T_{z_1w_2}=T_{yz_1}\cup T_{yw_2}$ does not contain $e$.
	This implies that  $T_{z_1w_2}$ contains $e'$.
	Let $y'$ be the last vertex on $T_{z_1w_2}$ that is in $P_2$ (while going from $z_1$ to $w_2$).
	Now, $T_{y'z_2}$ and $T_{w_2y'}$ contains neither $e$ nor $e'$ and hence $T_{w_2z_2}=T_{y'z_2}\cup T_{w_2y'}$ contains neither $e$ nor $e'$.

	To prove the second statement, observe that there is 
	a rainbow path from $x$ to either $w_1$ or $w_2$ not containing $e$, and 
	a rainbow path from $x$ to either $z_1$ or $z_2$ not containing $e'$,
	due to Observation~\ref{obs:edgeexclusion}.
\end{proof}
\begin{prul}
	\label{prul:treevertex2deg2}
	For each tree vertex $x_T$ on which Coloring Rule~\ref{rul:treevertex2deg2} has been applied as above and for each  $P_{ab}$ such that $Q_{ab}$ contains at least one of $e_1$ and $e_2$ (we say that the path rule is being applied on the pair $\left( x_T,P_{ab} \right))$, do the following.
	
	If $Q_{ab}$ contains both $e_1$ and $e_2$ then let $y_1:=w$ and  $y_2:=z$.
	If $Q_{ab}$ contains only $e_1$ and not $e_2$ then let $y_1:=w$. 
	If $Q_{ab}$ contains only $e_2$ and not $e_1$ then let $y_1:=z$. 
	If $a\in V(T)$, let $y_2:=a$, and if $b\in V(T)$, let $y_2:=b$.
	(Note that both $a$ and $b$ cannot be in $T$ as $Q_{ab}$ contains $e_1$ or $e_2$).
	If $a,b\notin V(T)$ and only one of $e_1,e_2$ is in $Q_{ab}$, then there is an edge $e''\notin \{e_1,e_2\}$ incident on $x_T$ in $Q_{ab}$;
	and since $e_1$ and $e_2$ are the only 2-edges incident on $x_T$, the edge $e''$ is a $1$-edge;
	let $y_2$ be the endpoint of $(e'')_1$ in $T$.
	Add to $P_{ab}$ the path between $y_1$ and $y_2$ 
given by Lemma~\ref{lem:treevertex2deg2}.
\end{prul}
\begin{lemma}
	\label{lem:invtreevertex2deg2}
Invariant \ref{inv:part-rainbow} is not violated during 
	Path Rule~\ref{prul:treevertex2deg2}.
\end{lemma}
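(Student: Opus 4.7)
The plan is to mirror the proof of Lemma~\ref{lem:invtreevertex2deg1} almost step by step. Let $E_N$ denote the set of edges added to $P_{ab}$ during the application of Path Rule~\ref{prul:treevertex2deg2} on $(x_T,P_{ab})$, and let $E_O$ denote the edges already in $P_{ab}$ before this application. By Lemma~\ref{lem:treevertex2deg2}, the edges in $E_N$ form a rainbow path from $y_1$ to $y_2$, and each such edge is either an internal edge of $T$ or a representative of $e_1$ or $e_2$ whose color was chosen (via Coloring Rule~\ref{rul:treevertex2deg2}) to equal the color of an edge of $T$. Hence $c(E_N)\subseteq c(T)$ and the colors in $E_N$ are pairwise distinct, so any new color-collision must occur between some $d' \in E_N$ and some $d \in E_O$.

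Now suppose for contradiction that Invariant~\ref{inv:part-rainbow} is violated. Then there exists $d\in E_O$ with $c(d)\in c(T)$. Since $Q_{ab}$ contains at least one of $e_1,e_2$, and its representative is being inserted into $P_{ab}$ only now (i.e.\ was not present in $E_O$), the contrapositive of Invariant~\ref{inv:internaledgespath} yields $E(T)\cap E_O=\emptyset$. Thus $d\notin E(T)$, and yet $c(d)\in c(T)$; by Invariant~\ref{inv:internalcolor} this is only possible if $x_T$ was complete just before Coloring Rule~\ref{rul:treevertex2deg2} was applied. But if $x_T$ were already complete, then both $e_1$ and $e_2$ (being $2$-edges at $x_T$ of $2$-edge degree exactly $2$) would have had their representatives colored by an earlier rule; tracing the coloring rules, the only candidate is Coloring Rule~\ref{rul:nontreedeg3} applied at the non-tree endpoints of $e_1,e_2$, and then Path Rule~\ref{prul:nontreedeg3} would already have placed a representative of the edge in $Q_{ab}\cap\{e_1,e_2\}$ into $E_O$, contradicting the hypothesis used two sentences earlier.

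The main obstacle is making this last step airtight, and it is the principal deviation from Lemma~\ref{lem:invtreevertex2deg1}, whose Coloring Rule~\ref{rul:treevertex2deg1} explicitly required $x_T$ to be incomplete. The cleanest way to treat it here is a case split: either $x_T$ was incomplete before Coloring Rule~\ref{rul:treevertex2deg2}, in which case the Lemma~\ref{lem:invtreevertex2deg1} argument transports verbatim; or $x_T$ was already complete, in which case we must carefully check that every application scenario of Coloring Rule~\ref{rul:nontreedeg3} that colors all four representatives of $e_1,e_2$ forces, via Path Rule~\ref{prul:nontreedeg3}, one of those representatives into $E_O$ whenever $e_i\in Q_{ab}$. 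A secondary subtlety is that the rainbow path supplied by Lemma~\ref{lem:treevertex2deg2} may reuse an internal color on both a representative of $e_i$ and an edge of $T$; one should verify (using Path Rule~\ref{prul:treevertex2deg2}'s choice of the path avoiding the ``color-giving'' edges $e,e'$) that this does not create an internal collision within $E_N$ itself.
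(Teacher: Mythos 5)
Your proof follows the same route as the paper's: decompose the edges into $E_N$ and $E_O$, use Lemma~\ref{lem:treevertex2deg2} to see that $E_N$ is rainbow with colors contained in $c(T)$, invoke Invariant~\ref{inv:internaledgespath} to conclude $E(T)\cap E_O=\emptyset$, and then Invariant~\ref{inv:internalcolor} to force $x_T$ to have been complete already. The paper dispatches the final step more tersely---completeness of $x_T$ simply renders Coloring Rule~\ref{rul:treevertex2deg2} (and hence Path Rule~\ref{prul:treevertex2deg2}) inapplicable, so no case split is needed---but your more elaborate closing argument via Coloring Rule~\ref{rul:nontreedeg3} and Path Rule~\ref{prul:nontreedeg3} is consistent with it, and your two flagged ``subtleties'' are already covered by that convention and by Lemma~\ref{lem:treevertex2deg2}, respectively.
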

\begin{proof}
	Let $E_N$ be the set of new edges
	added to $P_{ab}$ during the application of Path Rule~\ref{prul:treevertex2deg2} to $(x_T,P_{ab})$ and let $E_O$ be the set of already included edges in $P_{ab}$ before this application.
	Suppose Invariant \ref{inv:part-rainbow} is violated for the sake of contradiction.
	Then either there are two edges in $E_N$ with the same color or $c(E_N)\cap c(E_O)\neq \emptyset$.
	By Lemma~\ref{lem:treevertex2deg2}, all edges in $E_N$ are colored from $c(T)$ and have distinct colors.
	So, it has to be the case that $c(E_O)\cap c(E_N)\neq \emptyset$.
	Since $c(E_N)\subseteq c(T)$, this implies that $c(E_O)\cap c(T)\neq \emptyset$.
	Let $d$ be an edge in $E_O$ with color in $c(T)$.
	The representative of at least one edge of $Q_{ab}$ incident on $x_T$  was not in $P_{ab}$ before the application of Path Rule~\ref{prul:treevertex2deg2} on $(x_T,P_{ab})$, because otherwise the path rule is not applicable on $(x_T,P_{ab})$. Then, by Invariant~\ref{inv:internaledgespath}, we have that $E(T)\cap E_O=\emptyset$.
	Thus $d\notin E(T)$ but has color in $c(E(T))$ and $d\in E_O$.
	But then by Invariant~\ref{inv:internalcolor}, we have that $x_T$ was completed before the application of Coloring Rule~\ref{rul:treevertex2deg2}, thereby making the rule not applicable on $x_T$, which is a contradiction.
\end{proof}
\begin{figure}[t]
    \centering
    \begin{subfigure}[b]{0.49\textwidth}
    \centering
        \includegraphics[scale=1.0,keepaspectratio]{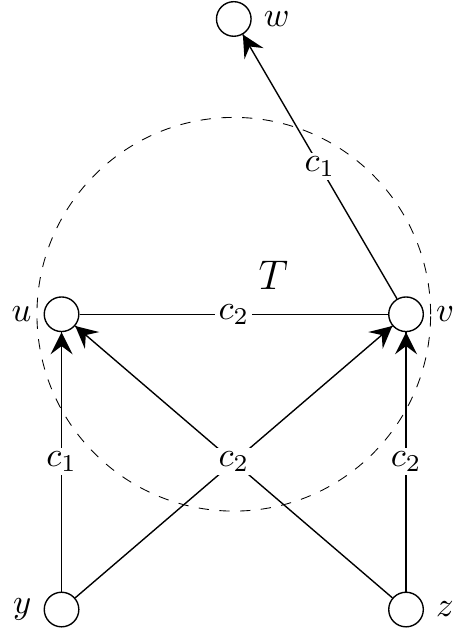}
        \caption{}
    \end{subfigure}%
    \hfill
    \begin{subfigure}[b]{0.49\textwidth}
    \centering
        \includegraphics[scale=1.0,keepaspectratio]{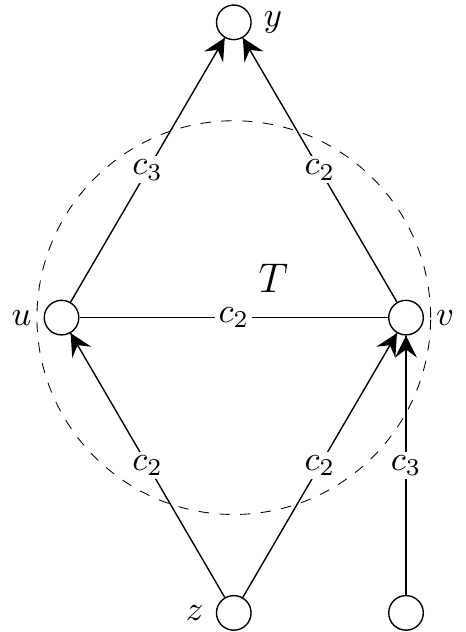}
        \caption{}
    \end{subfigure}
		\caption{Coloring Rule~\ref{rul:treevertexdeg3}. \textbf{(a)} Case~1 where $c_1=s(T)$ and $c_2=c(uv)$. \textbf{(b)} Case~2 where $c_2=c(uv)$ and $c_3$ is the color of the representative of an arbitrarily chosen 1-edge incident on $x_T$.}
\label{fig:treedeg3-1-2}
\end{figure}
\begin{rul}
	\label{rul:treevertexdeg3}	
	For each incomplete tree vertex $x_T$ having degree at least $3$ in $B_1$: 
	We can assume that Coloring Rules~\ref{rul:2edgedeg4}, \ref{rul:2edgedeg3}, \ref{rul:treevertex2deg1}, \ref{rul:treevertex2deg2} are not applicable on $x_T$ as otherwise $x_T$ would have been completed.
	If $x_T$ had at least three 2-edges incident on it, then Coloring Rule~\ref{rul:2edgedeg4} or \ref{rul:2edgedeg3} would have been applicable on $x_T$. 
	If it had 2-edge degree $1$, then Coloring Rule~\ref{rul:treevertex2deg1} would have been applicable on $x_T$.
	If it had 2-edge degree $0$, then it would have been completed after Coloring Rule~\ref{rul:1edge}. 
	Hence, we can assume that $x_T$ has 2-edge degree exactly $2$.
	Now, if $|E(T)|\ge 2$, Coloring Rule~\ref{rul:treevertex2deg2} becomes applicable on $x_T$.
	Hence, we can assume that the tree $T$ is just an edge.
	Let $uv$ be this edge.
	Let the two 2-edges incident on $x_T$ be $yx_T$ and $zx_T$.
	
	\noindent {\bf Case 1:}\ Both  $yx_T$ and $zx_T$ are incoming to $x_T$ (see Figure~\ref{fig:treedeg3-1-2}~(a)).
	
	Then the outgoing edge of $u$ in $B_1$ is a $1$-edge, say $\dir{x_Tw}$.
	Assume without loss of generality that 
	 its representative is $vw$. 
	Let $c_1=s(T)$ and $c_2$ be the color of $uv$.
	Note that $vw$ is colored with $c_1$ due to Coloring Rule~\ref{rul:1edge}.
	If $yu$ and $yv$ are uncolored, color them with $c_1$ and $c_2$ respectively. 
	If $zu$ and $zv$ are uncolored, color both of them with $c_2$.
	
	\noindent {\bf Case 2:}\ One of the 2-edges, say $yx_T$ is outgoing from $x_T$ (see Figure~\ref{fig:treedeg3-1-2}~(b)).
	
	Let $c_2$ be the color of $uv$ and $c_3$ be the color of representative of any $1$-edge incoming on $x_T$.
	Note that at least one such $1$-edge exists as the degree of $x_T$ is at least $3$.
	If $yu$ and $yv$ are uncolored, color them with $c_3$ and $c_2$ respectively. 
	If $zu$ and $zv$ are uncolored, color both of them with $c_2$. 
\end{rul}

\begin{prul}
	\label{prul:treevertexdeg3}
	For each tree vertex $x_T$ on which Coloring Rule~\ref{rul:treevertexdeg3} has been applied as above and for each  $P_{ab}$ such that $Q_{ab}$ contains at least one of $yx_T$ and $zx_T$ (we say that the path rule is being applied on the pair $\left( x_T,P_{ab} \right)$), do the following:
	\begin{itemize}
	\item If $Q_{ab}$ contains both $yx_T$ and $zx_T$ then add $yu$ and $uz$ to $P_{ab}$. 
	
	\item If $Q_{ab}$ contains only $yx_T$ and not $zx_T$ then let $y_1:=y$. 
	If $Q_{ab}$ contains only $zx_T$ and not $yx_T$ then let $y_1:=z$. 
	If $a\in V(T)$, let $y_2:=a$, and if $b\in V(T)$ let $y_2:=b$.
	(Note that both $a$ and $b$ cannot be in $T$ as $Q_{ab}$ contains $yx_T$ or $zx_T$).
	If $a,b\notin V(T)$ and only one of $yx_T,zx_T$ is in $Q_{ab}$ then there is an edge $e''\notin \{yx_T,zx_T\}$ incident on $x_T$ in $Q_{ab}$.
	Further, since $yx_T$ and $zx_T$ are the only 2-edges incident on $x_T$, the edge $e''$ is a $1$-edge.
	Note that $(e'')_1$ is already added to $P_{ab}$ in Path rule~\ref{prul:1edge}.
	Let $y_2$ be the endpoint of $(e'')_1$ in $T$.\\
	Note that in all cases $y_2\in \left\{ u,v \right\}$ and $y_1\in \{y,z\}$. Hence, the edge $y_1y_2$ exists.
	Add the edge $y_1y_2$ to $P_{ab}$.
	\end{itemize}
\end{prul}
\begin{lemma}
	\label{lem:invtreevertexdeg3}
Invariant \ref{inv:part-rainbow} is not violated during 
	Path Rule~\ref{prul:treevertexdeg3}.
\end{lemma}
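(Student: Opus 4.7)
The plan is to follow the template established by the preceding invariant lemmas, especially Lemma~\ref{lem:invnontreedeg3}. Suppose for contradiction that Invariant~\ref{inv:part-rainbow} is violated during the application of Path Rule~\ref{prul:treevertexdeg3} on $(x_T, P_{ab})$, so there exist $e, e' \in P_{ab}$ with $c(e) = c(e')$. Let $E_N$ be the set of edges newly added in this step and $E_O$ those already present. Direct inspection shows $|E_N| \le 2$, and when the pair $\{yu, uz\}$ is appended their colors are distinct ($c_1 \neq c_2$ in Case~1 and $c_3 \neq c_2$ in Case~2). Hence we may take $e \in E_N$ and $e' \in E_O$, with $c(e) \in \{c_1, c_2, c_3\}$, and split on which color is matched.

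When $c(e) = c_2 = c(uv)$, the argument mirrors the proof of Lemma~\ref{lem:invtreevertex2deg2}: Coloring Rule~\ref{rul:treevertexdeg3} only fires when $x_T$ is incomplete and $T$ is the single edge $uv$, so by Invariant~\ref{inv:internalcolor} the only edge of $G$ with color $c_2$ at this point is $uv$ itself, forcing $e' = uv$. But Invariant~\ref{inv:internaledgespath} then demands that representatives of every $Q_{ab}$-edge incident on $x_T$ already lie in $E_O$, which fails for the 2-edge in $\{yx_T, zx_T\} \cap Q_{ab}$ whose representatives are being colored only now.

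The cases $c(e) = c_1 = s(T)$ (Case~1, $e = yu$) and $c(e) = c_3 = s(T_j)$ (Case~2, $e = yu$, with $T_j$ the tree whose surplus color was placed on the chosen incoming $1$-edge representative) are handled analogously, by enumerating how the matched surplus color could have been assigned to $e'$ earlier. For Coloring Rules~\ref{rul:2edgedeg4}, \ref{rul:2edgedeg3}, \ref{rul:nontreedeg3}, and non-$x_T$ applications of~\ref{rul:treevertexdeg3}, Lemmas~\ref{lem:2edgedeg4paths2} and \ref{lem:2edgedeg3paths2} together with the analogue implicit in the proof of Lemma~\ref{lem:invnontreedeg3} force $Q_{ab}$ to miss a subtree $\st{\cdot}{x_T}$ (resp.\ $\st{\cdot}{x_{T_j}}$) containing $x_T$, contradicting $x_T \in Q_{ab}$. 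For Coloring Rule~\ref{rul:1edge}, tracing the structural consequence of $h(e') \in Q_{ab}$ yields that $Q_{ab}$ must contain the outgoing $1$-edge of $x_T$ together with one of the two 2-edges, at which point Path Rule~\ref{prul:treevertexdeg3} determines $y_2$ from that $1$-edge's representative and appends $yv$ (color $c_2$) rather than $yu$, contradicting $e = yu$.

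The main obstacle lies in the $c_3$ subcase: the very $1$-edge used to define $c_3$ has its representative colored $c_3$ by Coloring Rule~\ref{rul:1edge}, so that representative enters $E_O$ whenever its $H$-image lies in $Q_{ab}$. Resolving this relies on Lemma~\ref{lem:inv1-1edge} (each surplus color is used at most once by Rule~\ref{rul:1edge}) together with a careful examination of the branch in which Path Rule~\ref{prul:treevertexdeg3} appends $yu$: either both $yx_T$ and $zx_T$ are in $Q_{ab}$ (leaving no room for a third $x_T$-incident edge in $Q_{ab}$, so the offending representative is absent from $E_O$), or the edge $e''$ selected to define $y_2$ is a different $1$-edge whose representative carries a distinct surplus color by Lemma~\ref{lem:inv1-1edge}. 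Handling the remaining pathological sub-case, where one must either arrange the coloring rule to select an incoming $1$-edge whose representative has $v$ as its endpoint in $T$ or otherwise avoid the collision, is where the technical bulk of the argument resides, following the same careful bookkeeping as in Lemma~\ref{lem:invnontreedeg3}.
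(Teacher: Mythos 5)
Your framework matches the paper's: reduce to $e\in E_N$, $e'\in E_O$ with $c(e)=c(e')\in\{c_1,c_2,c_3\}$, and your $c_2$ case (Invariant~\ref{inv:internalcolor} forces $e'=uv$, then Invariant~\ref{inv:internaledgespath} kills it) is exactly the paper's argument. The gaps are in the surplus-color cases. First, you claim that collisions with edges colored by Coloring Rule~\ref{rul:nontreedeg3} are dispatched by a subtree-avoidance statement forcing $Q_{ab}$ to miss a subtree containing $x_T$. No such statement exists for that rule (Lemmas~\ref{lem:2edgedeg4paths2} and~\ref{lem:2edgedeg3paths2} cover only Rules~\ref{rul:2edgedeg4} and~\ref{rul:2edgedeg3}), and it cannot exist: an edge colored $s(T)$ by Rule~\ref{rul:nontreedeg3} lies on the root-ward path from $x_T$, and $Q_{ab}$ may perfectly well contain both it and $x_T$. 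The actual contradiction is different in kind: its presence in $Q_{ab}$ forces the outgoing $1$-edge $x_Tw$ into $Q_{ab}$, whereupon Path Rule~\ref{prul:treevertexdeg3} sets $y_2=v$ and appends $yv$ (color $c_2$) rather than $yu$, contradicting $e=yu$. This "representative flip" is the mechanism for the Rule~\ref{rul:1edge} and Rule~\ref{rul:nontreedeg3} collisions alike, and your proposal supplies it only for Rule~\ref{rul:1edge}.

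Second, and more importantly, you explicitly leave open the sub-case you correctly identify as the crux: $c(e)=c_3$ where the incoming $1$-edge $xx_T$ defining $c_3$ itself lies in $Q_{ab}$, so that its representative (colored $c_3$ by Rule~\ref{rul:1edge}, and unique with that color by Lemma~\ref{lem:inv1-1edge}) sits in $E_O$. The paper closes this by the same flip: since $xx_T\in Q_{ab}$, Path Rule~\ref{prul:treevertexdeg3} takes $y_2$ to be the $T$-endpoint of $(xx_T)_1$, namely $v$, and adds $yv$ (color $c_2$) instead of $yu$ (color $c_3$), so $e\neq yu$ and no $c_3$-collision arises. Writing "one must either arrange the coloring rule to select an incoming $1$-edge whose representative has $v$ as its endpoint in $T$ or otherwise avoid the collision \ldots\ is where the technical bulk of the argument resides" names the obstacle without resolving it; as submitted, the proof is incomplete precisely there. (You also mislabel the contradiction for Rules~\ref{rul:2edgedeg4}/\ref{rul:2edgedeg3} in the $c_3$ case: there the subtree avoided is $\st{x_{T''}}{x_{T'}}$, and the contradiction is that $x_{T''}$ would then lie between $x_T$ and $x_{T'}$, so $s(T'')$ rather than $s(T')$ would have colored $xx_T$ --- not that $x_T\notin Q_{ab}$.)
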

\begin{proof}
	Suppose the invariant is violated.
Then there exist edges $e$ and $e'$ in $P_{ab}$ having the same color.
We can assume without loss of generality that $e$ was colored during the application of Coloring Rule~\ref{rul:treevertexdeg3} on $(x_T,P_{ab})$.
We added at most two edges during the application of Path Rule~\ref{prul:treevertexdeg3} on $x_T$ and if we added two edges we have made sure they have distinct colors.
Thus $e'$ was not added during the application of Path Rule~\ref{prul:treevertexdeg3} on $(x_T,P_{ab})$.
The colors that are possible for $e$ are $c_1,c_2$ and $c_3$ according to Coloring Rule~\ref{rul:treevertexdeg3}.

\noindent {\bf Case 1:}\ $c(e)=c(e')=c_2$.

Recall $c_2=c(uv)$.	
If $e'=uv$, then by Invariant~\ref{inv:internaledgespath}, the representatives of all the edges of $Q_{ab}$ incident on $x_T$ are in $P_{ab}$ even before the application of Path Rule~\ref{prul:treevertexdeg3} on $(x_T,P_{ab})$. Then Path Rule \ref{prul:treevertexdeg3} is not applicable on $\left( x_T,P_{ab} \right)$.
Thus $e'\notin E(T)$ but $c(e')\in c(E(T))$. 
Then by Invariant~\ref{inv:internalcolor}, $x_T$ was completed before the application of Coloring Rule~\ref{rul:treevertexdeg3}, making the rule not applicable on $x_T$.
Thus, such an $e'$ does not exist.
	
	\noindent {\bf Case 2:}\ $c(e)=c(e')=c_1=s(T)$. 
	
	This means $e=yu$ and that Case 1 of Coloring Rule~\ref{rul:treevertexdeg3} (see Figure~\ref{fig:treedeg3-1-2}~(a)) was applied on $x_T$.
	The only coloring rules so far that use surplus colors are Coloring Rules
	~\ref{rul:treevertexdeg3}, \ref{rul:nontreedeg3}, \ref{rul:2edgedeg3}, \ref{rul:2edgedeg4}, and \ref{rul:1edge}.
	
	\noindent {\bf Case 2.1}\ $e'$ was colored during Coloring Rule~\ref{rul:1edge}.
	 
	Note that this means $e'$ is a $1$-edge
	and the only way $e'$ can have color $s(T)$ is if $e'=vw$. 
	But, in Path Rule~\ref{prul:treevertexdeg3}, we add $yu=e$ to $P_{ab}$ only when $vw$ is not in $Q_{ab}$.
	Thus we have a contradiction.
	
	\noindent {\bf Case 2.2}\ $e'$ was colored during Coloring Rules~\ref{rul:2edgedeg3} or \ref{rul:2edgedeg4}.
	
Let $T'$ be the tree on which $e'$ is incident.
Since $e'$ was colored with $s(T)$ during Coloring Rules~\ref{rul:2edgedeg3} or \ref{rul:2edgedeg4},
we know that $Q_{ab}$ does not intersect $\st{x_{T'}}{x_T}$ due to Lemmas~\ref{lem:2edgedeg3paths2} and \ref{lem:2edgedeg4paths2}.
Then $Q_{ab}$ does not contain $x_{T}$ and hence
$P_{ab}$ does not contain $e$,
which is a contradiction.

	\noindent {\bf Case 2.3}\ $e'$ was colored during Coloring Rule~\ref{rul:nontreedeg3}.
	
	Then $h(e')$ is a 2-edge in the path from $x_T$ to root of $B_1$. 
	Since $e'$ is in $Q_{ab}$, this means that $x_Tw$ is in $Q_{ab}$.
	But then by Path Rule~\ref{prul:nontreedeg3}, we would have added $yv$ instead of $yu=e$ to $P_{ab}$, a contradiction.
	
	\noindent {\bf Case 2.4}\ $e'$ was colored during Coloring Rule~\ref{rul:treevertexdeg3}.
	
	The only application of Coloring Rule~\ref{rul:treevertexdeg3} that uses $s(T)$ is the application on $x_T$. 
	But since $e'$ was not colored during this application, we have a contradiction.
	
	\noindent {\bf Case 3:}\ $c(e)=c(e')=c_3$.
	
	This means $e=uy$ and that Case 2 of Coloring Rule~\ref{rul:treevertexdeg3} was applied on $x_T$.
	Let $x$ be the neighbor of $x_T$ such that $xx_T$ is the $1$-edge incident on $x_T$ whose representative is colored with $c_3$. 
	By Coloring Rule~\ref{rul:1edge}, there exist a tree $T'$ that is a descendant of $x$ such that $s(T')=c_3$.
	The only coloring rules so far that use surplus colors are Coloring Rules
	~\ref{rul:treevertexdeg3}, \ref{rul:nontreedeg3}, \ref{rul:2edgedeg3}, \ref{rul:2edgedeg4}, and \ref{rul:1edge}.
	
	\noindent {\bf Case 3.1}\ $e'$ was colored during Coloring Rule~\ref{rul:1edge}.
	 
	This means $e'$ is a $1$-edge.
	Since $xx_T$ is the only $1$-edge with color $s(T')$ by Lemma~\ref{lem:inv1-1edge}, we have that $e'=xx_T$.
	Hence, $xx_T$ is in $Q_{ab}$.
	But if $xx_T$ is in $Q_{ab}$, we would have added $vy$ and not $uy$ in Path Rule~\ref{prul:treevertexdeg3}.
	This is a contradiction to $e=uy$.
	
	\noindent {\bf Case 3.2}\ $e'$ was colored during Coloring Rules~\ref{rul:2edgedeg3} or \ref{rul:2edgedeg4}.
	
	Let $T''$ be such that $e'$ is adjacent on $x_{T''}$.
	Since $e'$ was colored with $s(T')$ during Coloring Rules~\ref{rul:2edgedeg3} or \ref{rul:2edgedeg4},
	we have that $Q_{ab}$ does not intersect $\st{x_{T''}}{x_{T'}}$ due to Lemmas~\ref{lem:2edgedeg3paths2} and \ref{lem:2edgedeg4paths2}.
	Since $P_{ab}$ contains $e$ that is incident on $x_{T}$, we have that $Q_{ab}$ contains $x_{T}$.
	This implies that $x_T\notin \st{x_{T''}}{x_{T'}}$ implying that $x_{T''}$ is on the path between $x_T$ and $x_{T'}$. 
	But then $s(T'')$ and not $s(T')$ would have been used to color $xx_T$, a contradiction.
	
	\noindent {\bf Case 3.3}\ $e'$ was colored during Coloring Rule~\ref{rul:nontreedeg3} or \ref{rul:treevertexdeg3}.
	
	Since $e'$ is colored with $s(T')$, by Coloring Rule~\ref{rul:nontreedeg3} and \ref{rul:treevertexdeg3} this 
	implies $e'$ is in $\st{x_T}{x_{T'}}=\st{x_T}{x}$,
	implying that $Q_{ab}$ contains $x$. 
	But then we would have added $vy$ and not $uy=e$ to $P_{ab}$ according to Path Rule~\ref{prul:treevertexdeg3}, a contradiction.
\end{proof}
The following Lemma follows from the previous coloring rules.
\begin{lemma}
	\label{lem:rul1-8}
	Consider an edge $e$ in $B_1$ that remains uncolored after the application of Coloring Rules~\ref{rul:forest} through~\ref{rul:treevertexdeg3}.
	Let $x_T$ and $v$ be the endpoints of $e$
	(Note that due to Coloring Rule~\ref{rul:1edge}, $e$ is a $2$-edge, and hence one of its endpoints is a tree-vertex). 
	Then, both $x_T$ and $v$ have degree exactly $2$ in $B_1$,
	both edges incident on $x_T$ are 2-edges, and $T$ consists of a single edge. 
\end{lemma}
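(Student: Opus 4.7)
The plan is to suppose $e \in E(B_1)$ remains uncolored after Coloring Rules~\ref{rul:forest}--\ref{rul:treevertexdeg3} and to rule out every configuration other than the one described, by consulting the applicability conditions of each rule in turn. First I would observe that an uncolored edge of $B_1$ cannot lie in $\calF$ (by Coloring Rule~\ref{rul:forest}) and cannot be a $1$-edge (by Coloring Rule~\ref{rul:1edge}), so it must be a $2$-edge; hence exactly one of its endpoints is a tree vertex $x_T$ and the other is some non-tree vertex $v$, which establishes the preliminary remark of the statement.

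Next I would analyze the $2$-edge degree $d_2$ of $x_T$ in $B_1$. Since $e$ itself contributes, $d_2 \ge 1$. Coloring Rule~\ref{rul:2edgedeg4} would color $e$ whenever $d_2 \ge 4$, Coloring Rule~\ref{rul:2edgedeg3} whenever $d_2 = 3$, and Coloring Rule~\ref{rul:treevertex2deg1}, applicable because $x_T$ is incomplete (the edge $e$ witnesses this), whenever $d_2 = 1$. Therefore $d_2 = 2$. At this point Coloring Rule~\ref{rul:treevertex2deg2} would color $e$ whenever $|E(T)| \ge 2$, so $|E(T)| \le 1$. On the other hand, since $e$ is a $2$-edge, $v$ has at least two neighbors in $T$, so $|V(T)| \ge 2$ and hence $|E(T)| \ge 1$. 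Combining, $|E(T)| = 1$, so $T$ is a single edge.

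Then I would bound $\deg_{B_1}(x_T)$. If it were at least $3$, the preamble of Coloring Rule~\ref{rul:treevertexdeg3} shows its hypotheses are met ($x_T$ incomplete, $d_2 = 2$, $T$ a single edge), and both Case~1 and Case~2 of that rule color every $2$-edge incident on $x_T$; in particular $e$ would be colored. Hence $\deg_{B_1}(x_T) \le 2$, which together with $d_2 = 2$ forces $\deg_{B_1}(x_T) = 2$ with both incident edges being $2$-edges. Finally, for $v$, Corollary~\ref{cor:leafB1} states that leaves of $B_1$ are tree vertices, so $v$, being non-tree, satisfies $\deg_{B_1}(v) \ge 2$; and if $\deg_{B_1}(v) \ge 3$ then Coloring Rule~\ref{rul:nontreedeg3} applied to $v$ would color $e$, so $\deg_{B_1}(v) = 2$.

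I expect no serious obstacle: the argument is pure case-exclusion driven by the applicability preambles of the rules. The only delicate point is to verify that at each invocation the ``incomplete'' hypothesis required by Rules~\ref{rul:treevertex2deg1}, \ref{rul:treevertex2deg2} and~\ref{rul:treevertexdeg3} genuinely holds for $x_T$; but this is automatic, because the uncolored incident edge $e$ is a direct witness of incompleteness throughout the argument.
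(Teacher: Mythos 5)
Your proposal is correct and follows essentially the same strategy as the paper's proof: a case exclusion driven by the applicability preambles of Coloring Rules~\ref{rul:2edgedeg4}--\ref{rul:treevertexdeg3} together with Corollary~\ref{cor:leafB1}, with incompleteness of $x_T$ witnessed by the uncolored edge $e$. The only difference is the order of deductions (you pin down the $2$-edge degree first and the total degree last, whereas the paper does the reverse, which lets it skip invoking Rules~\ref{rul:2edgedeg4} and~\ref{rul:2edgedeg3}); this is immaterial.
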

\begin{proof}
	Suppose $u\in \left\{ v,x_T \right\}$ had degree not equal to $2$ in $B_1$.	
	First, suppose the degree was greater than $2$.
Then Coloring Rule~\ref{rul:treevertexdeg3} or \ref{rul:nontreedeg3} would have been applicable on $u$, and hence $u$ would have been completed. 
Therefore, $u$ has degree $1$ in $B_1$.
By Corollary~\ref{cor:leafB1}, every leaf of $B_1$ is a tree vertex.
Hence, $u$ is a tree vertex and $u=x_T$. 
But then Coloring Rule~\ref{rul:treevertex2deg1} would have been applicable on $x_T$, and $x_T$ would have been completed.
Thus, $e$ is already colored, which is a contradiction.
Hence, $x_T$ and $v$ have degree $2$ in $B_1$.

Now, suppose $x_T$ 
has only one 2-edge incident in $B_1$. 
Then, Coloring Rule~\ref{rul:treevertex2deg1} would have 
been applied on $x_T$ and $x_T$ would have been completed.
Thus, both edges incident on $x_T$ in $B_1$ are 2-edges.
If $T$ contained at least two edges, Coloring Rule~\ref{rul:treevertex2deg2} would have
been applied on $x_T$ and $x_T$ would have been completed.
Hence, $T$ contains only one edge.
\end{proof}

\begin{rul}
	\label{rul:1uncol2edge}
	For each tree vertex $x_T$ with exactly one uncolored 2-edge $e$ incident on it: note that it follows by Lemma~\ref{lem:rul1-8} that the tree $T$ comprises of a single edge $e'$. Let $e=vx_T$ and $e':=u_1u_2$.
	Color $(e)_1=vu_1$ and $(e)_2=vu_2$ with the color of $e'$.
\end{rul}
\begin{prul}
	\label{prul:1uncol2edge}
	For each tree vertex $x_T$ on which Coloring Rule~\ref{rul:1uncol2edge} has been applied as above and for each  $P_{ab}$ such that $Q_{ab}$ contains $e$ (we say that the path rule is being applied on the pair $\left( x_T,P_{ab} \right)$), do the following:

		First we pick vertex $w\in V(T)=\{u_1,u_2\}$ as follows: if $a\in V(T)$, let $w:=a$; if $b\in V(T)$ let $w:=b$;
	(note that both $a$ and $b$ cannot be in $T$ as $Q_{ab}$ contains $e$);
	if $a,b\notin V(T)$ then there is an edge $e_2\neq e$ of $Q_{ab}$ incident on $x_T$;
	furthermore, since $e$ is the only uncolored edge incident on $x_T$, a representative of the edge $e_2$ is already in $P_{ab}$;
	let $w$ be the endpoint in $T$ of this representative of $e_2$. \\
	Add $vw$ to $P_{ab}$.
\end{prul}
\begin{lemma}
	\label{lem:inv1uncol2edge}
Invariant \ref{inv:part-rainbow} is not violated during 
	Path Rule~\ref{prul:1uncol2edge}.
\end{lemma}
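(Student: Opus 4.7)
The plan is to mimic the proof structure of Lemma~\ref{lem:invtreevertex2deg2} (and of Lemma~\ref{lem:invtreevertex2deg1}), since the situation is essentially the same: Coloring Rule~\ref{rul:1uncol2edge} assigns to the representatives of $e=vx_T$ a color drawn from $c(T)$, and Path Rule~\ref{prul:1uncol2edge} adds a single new edge $vw$ carrying that color. So the only way Invariant~\ref{inv:part-rainbow} can be broken is if an already‐present edge of $P_{ab}$ carries the same internal color $c(e')$ of $T$.

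First I would set up the notation exactly as in the earlier lemmas. Let $E_N$ denote the set of edges added to $P_{ab}$ during this application of Path Rule~\ref{prul:1uncol2edge} and let $E_O$ denote the edges of $P_{ab}$ that were present before. Here $E_N$ consists of the single edge $vw$, which is colored with $c(e')\in c(T)$ by Coloring Rule~\ref{rul:1uncol2edge}. In particular the edges inside $E_N$ are trivially distinct in color, so the only way the invariant can fail is if there is some $d\in E_O$ with $c(d)=c(e')\in c(T)$.

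Next I would invoke the two bookkeeping invariants. Because Coloring Rule~\ref{rul:1uncol2edge} is applicable to $x_T$, the 2-edge $e$ was still uncolored just before this application, hence no representative of $e$ was in $P_{ab}$. Since $Q_{ab}$ contains $e$ and $e$ is incident on $x_T$, Invariant~\ref{inv:internaledgespath} then forces $E(T)\cap E_O=\emptyset$; thus the hypothetical bad edge $d$ lies outside $E(T)$ but still carries an internal color of $T$. However, the applicability of Coloring Rule~\ref{rul:1uncol2edge} also witnesses that $x_T$ was still incomplete at this moment (its 2-edge $e$ being uncolored). By Invariant~\ref{inv:internalcolor}, the internal colors of $T$ are therefore disjoint from the colors appearing anywhere else in $G$, so no such $d\notin E(T)$ with $c(d)\in c(T)$ can exist. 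This contradiction completes the proof.

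The step I expect to require the most care is the verification that Invariant~\ref{inv:internaledgespath} genuinely applies here, i.e.\ that before the path rule fires, no representative of the edges of $Q_{ab}$ incident on $x_T$ covered by Coloring Rule~\ref{rul:1uncol2edge} had yet been inserted; this uses precisely the fact that $e$ itself is the uncolored edge triggering the rule, together with the global promise (tracked by Invariant~\ref{inv:2edgerepcol}) that representatives of a 2-edge are colored in lockstep. Once these two invariants are lined up, the internal-color argument finishes the proof exactly as in Lemmas~\ref{lem:invtreevertex2deg1} and~\ref{lem:invtreevertex2deg2}.
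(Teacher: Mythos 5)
Your proposal is correct and follows essentially the same route as the paper's proof: the single new edge $vw$ has a color in $c(T)$, Invariant~\ref{inv:internaledgespath} (using that no representative of the still-uncolored edge $e$ was yet in $P_{ab}$) rules out any edge of $E(T)$ already being in $P_{ab}$, and Invariant~\ref{inv:internalcolor} (using that $x_T$ was incomplete) rules out any edge outside $T$ carrying an internal color of $T$. No gaps.
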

\begin{proof}
	The edge added to $P_{ab}$ during the application of Path Rule~\ref{prul:1uncol2edge} to $(x_T,P_{ab})$
	has color $c(e')\in c(T)$.
	If the invariant is violated, then there was an edge $e''$ in $P_{ab}$ already with color $c(e')$.
	By Invariant~\ref{inv:internaledgespath} $e'$ was not already in $P_{ab}$ as the edge $e$ incident on $x_T$ is in $Q_{ab}$ and the representative of $e$ was not added to $P_{ab}$ before.
	Thus $e''\neq e'$ but $c(e'')=c(e')$.
	Since $x_T$ was incomplete before the application of current coloring rule, by Invariant~\ref{inv:internalcolor}, none of the colors in $c(T)$ were used before anywhere outside of $T$.
	So, such an $e''$ does not exist, a contradiction.
\end{proof}
\begin{lemma}
	\label{lem:rul1-9}
	Consider a 2-edge $e$ incident on tree vertex $x_T$ that remains uncolored after the application of Rules~\ref{rul:forest} to~\ref{rul:1uncol2edge}.
	Then, $x_T$ has degree exactly $2$ in $B_1$, $T$ contains only one edge, and the other edge incident on $x_T$ is an uncolored 2-edge. 
\end{lemma}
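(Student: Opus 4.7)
My plan is to derive the three conclusions almost entirely from Lemma~\ref{lem:rul1-8}, and then dispatch the remaining obligation with a short case split on when Coloring Rule~\ref{rul:1uncol2edge} could have acted on the other edge. First I would invoke Lemma~\ref{lem:rul1-8}. Since Rule~\ref{rul:1uncol2edge} only colors edges and never uncolors any, the fact that the $2$-edge $e$ is uncolored after Rule~\ref{rul:1uncol2edge} implies it is also uncolored after Rules~\ref{rul:forest}--\ref{rul:treevertexdeg3}. Lemma~\ref{lem:rul1-8} then delivers in one stroke that $x_T$ has degree exactly $2$ in $B_1$, that both edges incident on $x_T$ in $B_1$ are $2$-edges, and that $T$ consists of a single edge. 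This gives the first two claims of the lemma and half of the third; only the uncoloredness of the \emph{other} edge incident on $x_T$, call it $e'$, remains.

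For that, I would argue by contradiction, assuming $e'$ has been colored by the end of Rule~\ref{rul:1uncol2edge}, and split into two cases depending on \emph{when}. In case (i), $e'$ was already colored before Rule~\ref{rul:1uncol2edge} began. Then at the start of Rule~\ref{rul:1uncol2edge} the vertex $x_T$ has $e'$ colored and $e$ uncolored, hence it has exactly one uncolored $2$-edge incident; Rule~\ref{rul:1uncol2edge} therefore fires on $x_T$ and colors $e$, contradicting the hypothesis that $e$ is still uncolored at the end. In case (ii), $e'$ is colored during Rule~\ref{rul:1uncol2edge} itself. Since $V_{\calT}$ is independent in $H$, every $2$-edge has a unique tree-vertex endpoint, and so the only invocation of Rule~\ref{rul:1uncol2edge} that could color $e'$ is the one applied at $x_T$; but such an invocation colors the unique uncolored $2$-edge incident on $x_T$ at that moment, forcing $e$ to have been already colored at that moment, which once more contradicts the hypothesis.

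The step I expect to require the most care is the observation in case (ii) that no invocation of Rule~\ref{rul:1uncol2edge} at a tree vertex other than $x_T$ can change the set of uncolored $2$-edges incident on $x_T$. This hinges on the structural fact that each $2$-edge has exactly one tree-vertex endpoint, because $V_{\calT}$ is independent in $H$; once that is spelled out, the remainder is mechanical bookkeeping, and both cases (i) and (ii) reduce to the same contradiction that $e$ would have been colored, contrary to our standing assumption.
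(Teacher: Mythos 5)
Your proposal is correct and follows essentially the same route as the paper: invoke Lemma~\ref{lem:rul1-8} for the degree, 2-edge, and single-edge facts, then observe that if the other incident 2-edge were colored, Coloring Rule~\ref{rul:1uncol2edge} would have fired on $x_T$ and colored $e$, a contradiction. Your explicit split on \emph{when} the other edge got colored (before versus during Rule~\ref{rul:1uncol2edge}) is a slightly more careful rendering of the paper's one-line argument, but it is the same idea.
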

\begin{proof}
	By Lemma~\ref{lem:rul1-8} it follows that $x_T$ has	
degree exactly $2$ in $B_1$, $T$ contains only one edge, and the other edge incident on $x_T$ is a 2-edge. 
If this other 2-edge is colored, then Coloring Rule~\ref{rul:1uncol2edge} would have been applied on
$x_T$ and $x_T$ would have been completed.
\end{proof}
\begin{figure}[t]
    \centering
        \includegraphics[scale=1.0,keepaspectratio]{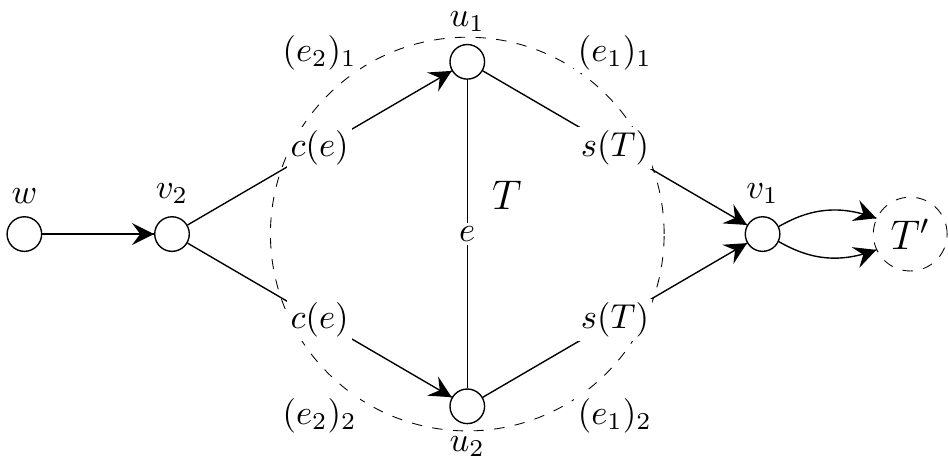}
		\caption{Coloring Rule~\ref{rul:treevertexdeg2par2edge}}
        \label{fig:treevertexdeg2par2edge}
\end{figure}

\begin{rul}
	\label{rul:treevertexdeg2par2edge}
	For each incomplete tree vertex $x_T$ whose parent's outgoing edge is a 2-edge: 
	(See Figure~\ref{fig:treevertexdeg2par2edge} for an Illustration).
	Let $v_1$ be the parent of $x_T$.
	From Lemma~\ref{lem:rul1-9}, it follows that $x_T$ has degree exactly $2$ in $B_1$, has one incoming and one outgoing 2-edge incident on it, both the 2-edges are uncolored,
	and the tree $T$ is just a single edge.
	Let $e_1$ and $e_2$ respectively be the outgoing and incoming 2-edges of $x_T$.
	Let $e$ be the only edge in $T$.
	Let $v_2$ be the other end point of $e_2$.
	Let $u_1$ be the endpoint of $(e_1)_1$ and $(e_2)_1$ in $T$.
	Let $u_2$ be the endpoint of $(e_1)_2$ and $(e_2)_2$ in $T$.
	From Lemma~\ref{lem:rul1-8}, we know that $v_1$ and $v_2$ have degree exactly $2$.
	Let $\dir{v_1x_{T'}}$ be the outgoing $2$-edge from $v_1$ and 
	let $\dir{wv_2}$ be the incoming edge on $v_2$ in $B_1$.\\
	Color $(e_2)_1$ and $(e_2)_2$ with the color of $e$,
	and color $(e_1)_1$ and $(e_1)_2$ with $s(T)$.
\end{rul}

\begin{prul}
	\label{prul:treevertexdeg2par2edge}
	For each tree vertex $x_T$ on which Coloring Rule~\ref{rul:treevertexdeg2par2edge} has been applied as above and for each  $P_{ab}$ such that $Q_{ab}$ contains $e_1$ or $e_2$
	(we say that the path rule is being applied on the pair $(x_T,P_{ab}))$, do the following.
	
	\begin{itemize}
	\item If $Q_{ab}$ contains both $e_1$ and $e_2$, add $v_1u_1$ and $u_1v_2$ to $P_{ab}$.
	
	\item If $Q_{ab}$ contains exactly one edge among $e_1$ and $e_2$, then either $a$ or $b$ is in $V(T)$.
	Also both of them cannot be in $V(T)$.
	Let $z$ be the one among $a$ or $b$ that is in $V(T)$.
	If $Q_{ab}$ contains $e_1$, add $v_1z$ to $P_{ab}$; otherwise, i.e.,
	if $Q_{ab}$ contains $e_2$, add $v_2z$ to $P_{ab}$.

	\end{itemize}
\end{prul}

\begin{lemma}
	\label{lem:invtreevertexdeg2par2edge}
Invariant \ref{inv:part-rainbow} is not violated during 
	Path Rule~\ref{prul:treevertexdeg2par2edge}.
\end{lemma}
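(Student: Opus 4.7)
My plan is to follow the template of the earlier invariant lemmas (in particular Lemmas~\ref{lem:invtreevertex2deg2}, \ref{lem:invtreevertexdeg3}, and \ref{lem:inv1uncol2edge}). Let $E_N$ denote the edges added to $P_{ab}$ during this application of Path Rule~\ref{prul:treevertexdeg2par2edge} and $E_O$ those already in $P_{ab}$ beforehand. Inspecting the three cases of the path rule, every edge of $E_N$ is a representative of $e_1$ or $e_2$, so it carries one of the two colors $s(T)$ or $c(e)$; moreover, at most one edge of each color is added, so $E_N$ is itself rainbow. Hence it suffices to prove $c(E_O) \cap \{c(e), s(T)\} = \emptyset$.

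For the internal color $c(e)$, I would reuse the style of Lemmas~\ref{lem:invtreevertex2deg1} and \ref{lem:inv1uncol2edge}. Just before Coloring Rule~\ref{rul:treevertexdeg2par2edge} fired, both $e_1$ and $e_2$ were uncolored (by Lemma~\ref{lem:rul1-9}), so no representative of either was in $E_O$; since the path rule fires precisely because one of $e_1, e_2$ lies in $Q_{ab}$, Invariant~\ref{inv:internaledgespath} yields $E(T) \cap E_O = \emptyset$. Because $x_T$ was incomplete at that moment, Invariant~\ref{inv:internalcolor} guarantees that $c(e)$ does not appear outside $E(T)$, and combining the two gives $c(e) \notin c(E_O)$.

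The main work is to show $s(T) \notin c(E_O)$, which I would do by checking each prior coloring rule that could place color $s(T)$. Coloring Rules~\ref{rul:treevertex2deg1}, \ref{rul:treevertex2deg2}, and \ref{rul:1uncol2edge} use only internal tree colors, so they are immediate. For Coloring Rule~\ref{rul:1edge}, $s(T)$ is used only on a 1-edge outgoing from $x_T$ or on a 1-edge from a non-tree vertex that picks $x_T$ as its tree-vertex child; the first is excluded since $e_1$ is a 2-edge, and the second, by Corollary~\ref{cor:1edge-child}, would force that non-tree vertex to be $v_1$, whose outgoing edge is the 2-edge $\dir{v_1x_{T'}}$, a contradiction. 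Coloring Rule~\ref{rul:treevertexdeg3} could use $s(T)$ as $c_1$ only when applied to $x_T$, excluded because Lemma~\ref{lem:rul1-9} forces $x_T$ to have degree $2$ in $B_1$, and as $c_3$ only via an incident 1-edge of color $s(T)$, already ruled out. Coloring Rule~\ref{rul:nontreedeg3} uses $s(T)$ only when applied to a non-tree vertex $u$ with $x_T = \ct{u}{u_i}$ for some child $u_i$; Lemma~\ref{lem:rul1-8} forbids $u = v_1$ (degree $2$), and any non-tree $u$ strictly above $v_1$ is strictly above the tree vertex $x_{T'} = \pa(v_1)$, so $x_{T'}$ sits strictly between $u_i$ and $x_T$ on the downward path, forcing $\ct{u}{u_i} \neq x_T$. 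Finally, for Coloring Rules~\ref{rul:2edgedeg4} or \ref{rul:2edgedeg3} applied to a tree vertex $x_{T''} \neq x_T$, any hypothetical $e' \in P_{ab}$ with color $s(T)$ would force $Q_{ab}$ to avoid $\st{x_{T''}}{x_T}$ by Lemmas~\ref{lem:2edgedeg4paths2} and \ref{lem:2edgedeg3paths2}, contradicting $x_T \in Q_{ab}$ (which holds because $e_1$ or $e_2$ lies in $Q_{ab}$).

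The surplus-color analysis is the main obstacle: it requires combining the tight structural restrictions on $B_1$ around $x_T$ from Lemmas~\ref{lem:rul1-8} and \ref{lem:rul1-9} with the locality statements of Lemmas~\ref{lem:2edgedeg4paths2} and \ref{lem:2edgedeg3paths2} to close off every earlier place a copy of $s(T)$ might have been planted.
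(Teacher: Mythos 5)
Your proposal is correct and follows essentially the same route as the paper's proof: the paper also reduces to two same-colored edges $d_1,d_2$ with $d_1$ newly added, splits on $c(d_1)\in\{c(e),s(T)\}$, handles $c(e)$ via Invariants~\ref{inv:internalcolor} and~\ref{inv:internaledgespath}, and disposes of $s(T)$ by the same rule-by-rule case analysis using the same ingredients (the 2-edge outgoing edges of $x_T$ and $v_1$ against Coloring Rule~\ref{rul:1edge}, Lemmas~\ref{lem:2edgedeg4paths2} and~\ref{lem:2edgedeg3paths2} against Coloring Rules~\ref{rul:2edgedeg4} and~\ref{rul:2edgedeg3}, the degree-$2$ bound on $v_1$ and the tree-vertex parent $x_{T'}$ against Coloring Rule~\ref{rul:nontreedeg3}, and the 1-edge requirement of Case~1 of Coloring Rule~\ref{rul:treevertexdeg3}). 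The only omission is the trivial case of an edge colored by an earlier application of Coloring Rule~\ref{rul:treevertexdeg2par2edge} itself on some $x_{T''}\neq x_T$ (the paper's Case~2.5), which cannot carry $s(T)$ since that rule only uses $s(T'')$ and internal colors of $T''$; this is a one-line fill, not a gap.
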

\begin{proof}
	Suppose for the sake of contradiction that the invariant is violated.
Then there exist distinct edges $d_1$ and $d_2$ in $P_{ab}$ having the same color.
We can assume without loss of generality that $d_1$ was colored during the application of Coloring Rule~\ref{rul:treevertexdeg2par2edge} on $x_T$.
We added at most two edges during the application of Path Rule~\ref{prul:treevertexdeg2par2edge} on $x_T$, and in the cases where we added two edges, the two edges have distinct colors.
Thus, $d_2$ was not added during the application of Path Rule~\ref{prul:treevertexdeg2par2edge} on $x_T$ and hence was not colored during the application of Coloring Rule~\ref{rul:treevertexdeg2par2edge} on $x_T$.

	The colors that are possible for $d_1$ are $s(T)$ and $c(e)$.
	
	\noindent {\bf Case 1:}\ $c(d_1)=c(d_2)=c(e)$.
	
	This is not possible since the color of $e$ has not been used to color any other edges so far by Invariant~\ref{inv:internalcolor}, and $e$ is not in $P_{ab}$ by Invariant~\ref{inv:internaledgespath}.
	
	\noindent {\bf Case 2:}\ $c(d_1)=c(d_2)=s(T)$. 
	
	This means $h(d_1)=v_1x_T$. 
	The only coloring rules so far that use the surplus colors of trees are Coloring Rules \ref{rul:1edge}, \ref{rul:2edgedeg4}, \ref{rul:2edgedeg3}, \ref{rul:nontreedeg3}, \ref{rul:treevertexdeg3}, and \ref{rul:treevertexdeg2par2edge}.
	Hence, $d_2$ was colored with $s(T)$ during one of them.
	
	\noindent {\bf Case 2.1}\ $d_2$ was colored during Coloring Rule~\ref{rul:1edge}. 
	
	This means that $d_2$ is a $1$-edge.
	According to Coloring Rule~\ref{rul:1edge}, the only $1$-edge that can be colored with $s(T)$ is either the outgoing edge of $x_T$ or the outgoing edge of the parent of $x_T$.
	However, both of them are 2-edges and hence we have a contradiction.
	
	\noindent {\bf Case 2.2}\ $d_2$ was colored during Coloring Rules~\ref{rul:2edgedeg3} or \ref{rul:2edgedeg4}.
	
	Let $T''$ be such that $d_2$ is adjacent on $x_{T''}$.
	Then, by Lemmas~\ref{lem:2edgedeg3paths2} and \ref{lem:2edgedeg4paths2}, we know that $Q_{ab}$ does not intersect $\st{x_{T''}}{x_T}$, in particular $Q_{ab}$ does not contain $x_T$.
	Since $h(d_1)=v_1x_T$,
	this implies $P_{ab}$ does not contain $d_1$, which is a contradiction.
	
	\noindent {\bf Case 2.3}\ $d_2$ was colored during application of Coloring Rule~\ref{rul:nontreedeg3}. 
	
	From Coloring Rule~\ref{rul:nontreedeg3}, this implies that $d_2$ was colored during application of Coloring Rule~\ref{rul:nontreedeg3} on some ancestor $v'$ of $x_T$ such that there are no other tree vertices in the path from $x_T$ to $v'$. 
	Then, the only possibility for $v'$ is $v_1$ as the parent of $v_1$ is a tree vertex.
	However, we know that $v_1$ has degree $2$ in $B_1$, and hence Coloring Rule~\ref{rul:nontreedeg3} could not have been applied on $v_1$.
	Thus, we have a contradiction.
	
	\noindent {\bf Case 2.4}\ $d_2$ was colored during application of Coloring Rule~\ref{rul:treevertexdeg3}. 
	
	Since $d_2$ is colored with $s(T)$ during Coloring Rule~\ref{rul:treevertexdeg3}, Case 1 of the rule (see Coloring Rule~\ref{rul:treevertexdeg3}) was applied on $x_T$ and hence the outgoing edge from $x_T$ is a $1$-edge.
	However, this is a 2-edge and hence we have a contradiction.
	
	\noindent {\bf Case 2.5}\ $d_2$ was colored during Coloring Rule~\ref{rul:treevertexdeg2par2edge}. 
	
	Since $d_2$ was not colored during the application of Coloring Rule~\ref{rul:treevertexdeg2par2edge} on $x_T$,
	we have that $d_2$ was colored during the application of Coloring Rule~\ref{rul:treevertexdeg2par2edge} on some $x_{T''}\neq x_{T}$.
	But then $d_2$ is not colored with $s(T)$, a contradiction.
\end{proof}

\begin{figure}[t]
    \centering
    \begin{subfigure}[b]{0.49\textwidth}
    \centering
        \includegraphics[scale=1,keepaspectratio]{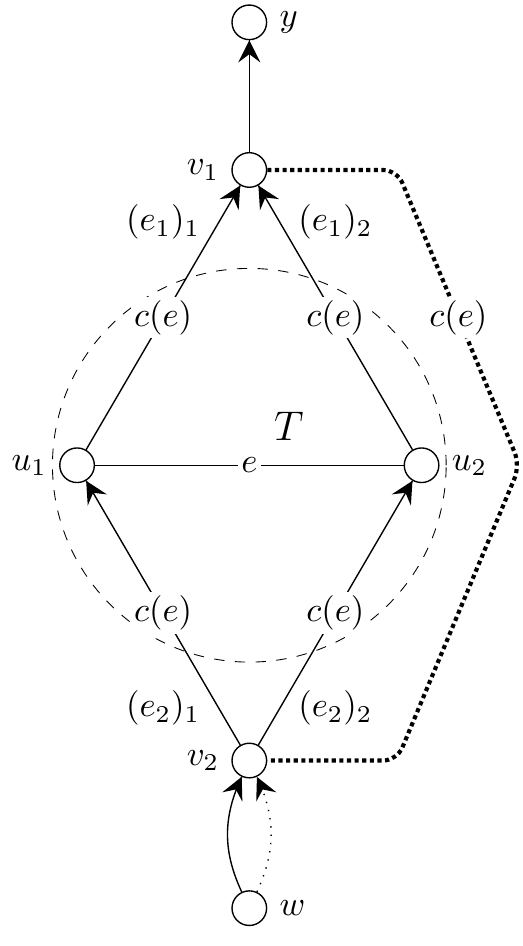}
        \caption{}
        \label{fig:final-1}
    \end{subfigure}
    \hfill
    \begin{subfigure}[b]{0.49\textwidth}
    \centering
	\includegraphics[scale=1,keepaspectratio]{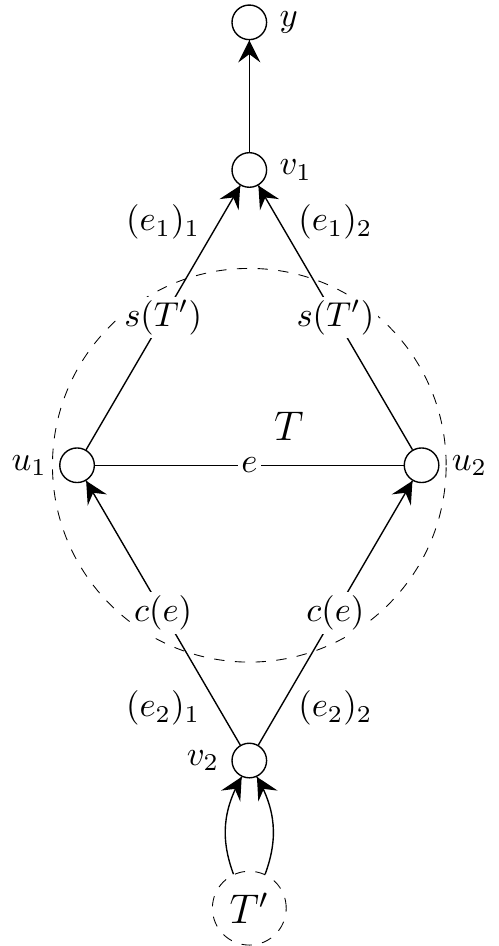}
        \caption{}
        \label{fig:final-2}
    \end{subfigure}
	\caption{Coloring Rule~\ref{rul:final}. \textbf{(a)} Case~1; here the edge $v_1v_2$ is drawn as a thick dotted line to highlight that it is not in $B_1$, and the edge $wv_2$ is drawn with one solid line and one dotted line to denote that it could be a 1-edge or a 2-edge\textbf{(b)} Case~2; here $w=x_{T'}$}
  	\label{fig:final}
\end{figure}

\begin{rul}
	\label{rul:final}
	For each incomplete tree vertex $x_T$:
	from Lemma~\ref{lem:rul1-9}, it follows that $x_T$ has degree exactly $2$ in $B_1$, has one incoming and one outgoing 2-edge incident on it, both the 2-edges are uncolored,
	and the tree $T$ is just a single edge.
	Let $e_1$ and $e_2$ respectively be the outgoing and incoming 2-edges of $x_T$.
	Let $v_1$ be the other end point of $e_1$ and $v_2$ be the other end point of $e_2$.
	Let $e=u_1u_2$ be the only edge in $T$.
	Without loss of generality, $u_1$ be the endpoint of $(e_1)_1$ and $(e_2)_1$ in $T$,
	and $u_2$ be the endpoint of $(e_1)_2$ and $(e_2)_2$ in $T$.
	From Lemma~\ref{lem:rul1-8}, we know that $v_1$ and $v_2$ have degree exactly $2$.
	Let $\dir{v_1y}$ be the outgoing edge from $v_1$ and
	$\dir{wv_2}$ be the incoming edge on $v_2$ in $B_1$.
	We have that $v_1y$ is a $1$-edge because otherwise
	Coloring Rule~\ref{rul:treevertexdeg2par2edge} would have been applicable on $x_T$, and $x_T$ would have been already completed.
	
	\noindent {\bf Case 1:}\ There is an edge between $v_1$ and $v_2$ in $G$. (See Figure~\ref{fig:final}~(a) for an illustration).
	
	Color the representatives of $e_1$ and $e_2$ with $c(e)$.
	Color $v_1v_2$ with $c(e)$.
	We say that $v_1v_2$ is a {\bf shortcut edge}.
	Note that shortcut edges are the only colored edges in $G$ that are not representatives of edges in $B$. 
	
	\noindent {\bf Case 2:}\ Case 1 does not apply. (See Figure~\ref{fig:final}~(b) for an illustration).
	
	We will prove in Lemma~\ref{lem:v2incoming} that $wv_2$ is a 2-edge in this case.
	Let $T'=f_{\calT}(w)$. 
	Color $(e_1)_1$ and $(e_1)_2$ with $s(T')$
	and color $(e_2)_1$ and $(e_2)_2$ with color of $e$.
\end{rul}

\begin{prul}
	\label{prul:final}
	For each tree vertex $x_T$ on which Coloring Rule~\ref{rul:final} has been applied as above and for each  $P_{ab}$ such that $Q_{ab}$ contains $e_1$ or $e_2$
	(we say that the path rule is being applied on the pair $(x_T,P_{ab}))$, do the following.
	
	\begin{itemize}
	\item If $Q_{ab}$ contains both $e_1$ and $e_2$: 
	if $v_1v_2\in E(G)$, add $v_1v_2$ to $P_{ab}$;
	otherwise add $v_1u_1$ and $v_2u_1$ to $P_{ab}$.
	
	\item If $Q_{ab}$ contains exactly one edge among $e_1$ and $e_2$, then either $a$ or $b$ is in $V(T)$.
	Also, both of them cannot be in $V(T)$.
	Let $z$ be the one among $a$ or $b$ that is in $V(T)$.
	If $Q_{ab}$ contains $e_1$, add $v_1z$ to $P_{ab}$.
	If $Q_{ab}$ contains $e_2$, add $v_2z$ to $P_{ab}$.
	\end{itemize}
\end{prul}

\begin{lemma}
	\label{lem:invfinal}
Invariant \ref{inv:part-rainbow} is not violated during 
	Path Rule~\ref{prul:final}.
\end{lemma}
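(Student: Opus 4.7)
My plan follows the template established by the earlier invariant-preservation lemmas, most closely Lemma~\ref{lem:invtreevertexdeg2par2edge}. I would suppose for contradiction that distinct edges $d_1,d_2\in P_{ab}$ have $c(d_1)=c(d_2)$ after Path Rule~\ref{prul:final} has been applied on $(x_T,P_{ab})$, and pick $d_1$ among the edges just added. A direct inspection of Path Rule~\ref{prul:final} shows that in Case~1 the only newly added edge is the shortcut edge $v_1v_2$, while in Case~2 the two added edges $v_1u_1$ and $v_2u_1$ receive distinct colors $s(T')$ and $c(e)$; hence $d_2$ must be an older edge and $c(d_1)\in\{c(e),s(T')\}$.

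For the easy branch $c(d_1)=c(e)$, I would appeal to the two auxiliary invariants. Because $x_T$ was still incomplete when Coloring Rule~\ref{rul:final} fired on it, Invariant~\ref{inv:internalcolor} guarantees $c(e)$ has been absent from the rest of $G$, and Invariant~\ref{inv:internaledgespath} guarantees the single internal edge $e$ has been absent from $P_{ab}$ (since a representative of whichever of $e_1,e_2$ lies in $Q_{ab}$ was still uncolored before the current rule). Together these rule out any older $d_2$ with $c(d_2)=c(e)$.

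The harder branch is $c(d_1)=s(T')$, which only arises in Case~2 of Coloring Rule~\ref{rul:final}; so $d_1$ is a representative of $e_1$ and $Q_{ab}$ contains $x_T$. I would enumerate the preceding rules that can ever assign color $s(T')$ and contradict each. Coloring Rule~\ref{rul:1edge} is ruled out because the only two 1-edges that could receive $s(T')$ --- one outgoing from $x_{T'}$ and one outgoing from the unique non-tree parent of $x_{T'}$ --- are actually both 2-edges ($wv_2$ by the promise of Lemma~\ref{lem:v2incoming}, and $e_2$ by hypothesis). Coloring Rules~\ref{rul:2edgedeg4} and~\ref{rul:2edgedeg3} are ruled out via Lemmas~\ref{lem:2edgedeg4paths2} and~\ref{lem:2edgedeg3paths2}, since any compatible tree vertex $x_{T''}$ would have to lie on the $\und{B_1}$-path $x_T-v_2-x_{T'}$, whose only interior vertex $v_2$ is non-tree. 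Coloring Rule~\ref{rul:nontreedeg3} is ruled out because the unique non-tree vertex $u$ with $x_{T'}=\ct{u}{u_i}$ would be $v_2$, whose degree in $B_1$ is two; and Coloring Rule~\ref{rul:treevertexdeg3} is ruled out because its Case~1 would force the outgoing edge of $x_{T'}$ to be a 1-edge and its Case~2 would produce color $s(T')$ only via a 1-edge representative already excluded.

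The main obstacle is the final sub-case: that Coloring Rule~\ref{rul:treevertexdeg2par2edge} has previously been applied on $x_{T'}=w$ itself, in which case $(wv_2)_j$ is genuinely colored $s(T')$. A superficial inspection then raises the possibility that $Q_{ab}$ traverses $v_1,x_T,v_2,w$, placing both the newly added representative $v_1u_1$ of $e_1$ and some representative of $wv_2$ into $P_{ab}$, each colored $s(T')$. To close this gap I plan to invoke Lemma~\ref{lem:v2incoming} (whose statement already encodes nontrivial structural information about $wv_2$) together with the lex-maximality of the configuration vector of $B$ and, if needed, the minimality of the number of components of $\calF$, arguing that if this combined configuration were to occur then $B$ or $\calF$ could be locally modified into a strictly better skeleton or a fewer-component forest, contradicting our choices. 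This structural exchange is the delicate heart of the argument.
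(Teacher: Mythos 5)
Your proposal follows essentially the same route as the paper: the same reduction to $c(d_1)\in\{c(e),s(T')\}$, the same use of Invariants~\ref{inv:internalcolor} and~\ref{inv:internaledgespath} for the $c(e)$ branch, and the same rule-by-rule elimination for the $s(T')$ branch with the same supporting lemmas. The "delicate heart" you defer — excluding that Coloring Rule~\ref{rul:treevertexdeg2par2edge} was applied on $x_{T'}$ — is exactly what the paper isolates as Lemma~\ref{lem:Tdashdegree3}, proved by the forest exchange $F'=(F\setminus\{u_1,u_3\})\cup\{v_1,v_2\}$ using the minimum-component choice of $\calF$ (not the skeleton's configuration vector), so your instinct about which extremal property to use is right. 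One small omission: your enumeration should also dispose of $d_2$ being the \emph{other} representative of $e_1$, colored $s(T')$ by Coloring Rule~\ref{rul:final} itself (the paper's Case~2.6); this is immediate since Path Rule~\ref{prul:final} puts only one representative of $e_1$ into $P_{ab}$.
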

\begin{proof}
	Suppose for the sake of contradiction that the invariant is violated.
Then there exist distinct edges $d_1$ and $d_2$ in $P_{ab}$ having the same color.
We can assume without loss of generality that $d_1$ was colored during the application of Coloring Rule~\ref{rul:final} on $x_T$.
We added at most two edges during the application of Path Rule~\ref{prul:final} on $x_T$, and in the cases where we added two edges, the two edges have distinct colors.
Thus $d_2$ was not added during the application of Path Rule~\ref{prul:final} on $x_T$ and hence was not colored during Coloring Rule~\ref{rul:final} on $x_T$.

	The colors that are possible for $d_1$ are $c(e)$ and $s(T')$.
	
	\noindent {\bf Case 1:}\ $c(d_1)=c(d_2)=c(e)$.
	
	This is not possible since the color of $e$ has not been used to color any other edges so far by Invariant~\ref{inv:internalcolor}, and $e$ is not in $P_{ab}$ by Invariant~\ref{inv:internaledgespath}.
	
	\noindent {\bf Case 2:}\ $c(d_1)=c(d_2)=s(T')$.
	
	This means $h(d_1)=e_1$ and that Case 2 of Coloring Rule~\ref{rul:final} was applied on $x_T$.
	The only coloring rules so far that use the surplus colors of trees are Coloring Rules \ref{rul:1edge}, \ref{rul:2edgedeg4}, \ref{rul:2edgedeg3}, \ref{rul:nontreedeg3}, \ref{rul:treevertexdeg3}, \ref{rul:treevertexdeg2par2edge}, and \ref{rul:final}.
	Hence, $d_2$ was colored with $s(T')$ during one of them.
	
	\noindent {\bf Case 2.1}\ $d_2$ was colored during Coloring Rule~\ref{rul:1edge}.
	
	This means $d_2$ is a $1$-edge and $h(d_2)$ is either $x_{T'}v_2$ or $v_2x_T$.
	But since both $x_{T'}v_2$ and $v_2x_T$ are 2-edges (since Case 2 of Coloring Rule~\ref{rul:final} was applied on $x_T$),
	this is not possible.
	
	\noindent {\bf Case 2.2}\ $d_2$ was colored during Coloring Rules~\ref{rul:2edgedeg3} or \ref{rul:2edgedeg4}.
	
	Let $T''$ be the tree such that $d_2$ is adjacent on $x_{T''}$.
	By Lemmas~\ref{lem:2edgedeg3paths2} and \ref{lem:2edgedeg4paths2},
	we know that $Q_{ab}$ does not intersect $\st{x_{T''}}{x_{T'}}$.
	Then $x_T$ is not in $\st{x_{T''}}{x_{T'}}$.
	This implies $x_{T''}$ is in the path from $x_T$ to $x_{T'}$.
	But the only vertex in the path from $x_T$ to $x_{T'}$ is $v_2$, a non-tree vertex.
	Thus, we have a contradiction.
	
	\noindent {\bf Case 2.3}\ $d_2$ was colored during application of Coloring Rule~\ref{rul:nontreedeg3}. 
	
	From Coloring Rule~\ref{rul:nontreedeg3}, this implies that $d_2$ was colored during application of Coloring Rule~\ref{rul:nontreedeg3} on some ancestor $v'$ of $x_{T'}$ such that there are no other tree vertices in the path from $x_{T'}$ to $v'$. 
	Then, the only possibility for $v'$ is $v_2$ as the parent of $v_2$ is a tree vertex.
	However, we know that $v_2$ has degree $2$ in $B_1$, and hence Coloring Rule~\ref{rul:nontreedeg3} could not have been applied on $v_2$.
	Thus, we have a contradiction.
	
	\noindent {\bf Case 2.4}\ $d_2$ was colored during application of Coloring Rule~\ref{rul:treevertexdeg3}.
	
	Since $d_2$ is colored with $s(T')$ during Coloring Rule~\ref{rul:treevertexdeg3}, Case 1 of the rule (see Coloring Rule~\ref{rul:treevertexdeg3}) was applied on $x_{T'}$ and hence the outgoing edge from $x_{T'}$ is a $1$-edge.
	However, this is a 2-edge and hence we have a contradiction.
	
%
%
%
%
	
	\noindent {\bf Case 2.5}\ $d_2$ was colored during application of Coloring Rule~\ref{rul:treevertexdeg2par2edge}.
	
	Since $c(d_2)=s(T')$, from Coloring Rule~\ref{rul:treevertexdeg2par2edge} we get that $d_2$ was colored during the application of Coloring Rule~\ref{rul:treevertexdeg2par2edge} on $x_{T'}$.
	In Lemma~\ref{lem:Tdashdegree3}, we will prove that Coloring Rule~\ref{rul:treevertexdeg2par2edge} was not applied on $x_{T'}$.
	Thus, we have a contradiction.
	
	\noindent {\bf Case 2.6}\ $d_2$ was colored during application of Coloring Rule~\ref{rul:final}.
	
	Since $c(d_2)=s(T')$, from Coloring Rule~\ref{rul:final}, we get that $d_2$ was colored during the application of Coloring Rule~\ref{rul:final} on $x_{T}$.
	Moreover, $h(d_2)=e_1$. 
	Recall that we have $h(d_1)=e_1$ too.
	Since we picked only one representative of $e_1$ into $P_{ab}$ by Path Rule~\ref{prul:final},
	we have that $d_1=d_2$.
	This is a contradiction to the fact that $d_1$ and $d_2$ are distinct.
\end{proof}

Now we proceed towards proving Lemmas \ref{lem:v2incoming} and \ref{lem:Tdashdegree3} that were used above.
For this we need to prove some auxiliary lemmas first.
\begin{figure}[t]
    \centering
    \begin{subfigure}[b]{0.49\textwidth}
    \centering
        \includegraphics[scale=1,keepaspectratio]{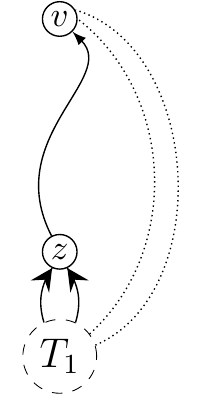}
        \caption{}
        \label{fig:descendant-1}
    \end{subfigure}
    \hfill
    \begin{subfigure}[b]{0.49\textwidth}
    \centering
        \includegraphics[scale=1,keepaspectratio]{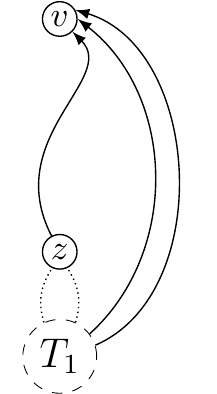}
        \caption{}
        \label{fig:descendant-2}
    \end{subfigure}
	\caption{An illustration of the proof of Lemma~\ref{lem:descendant}. The densely dotted edges denote the edges of $H$ that are not in $B$. \textbf{(a)} The scenario given by the precondition of the lemma, and \textbf{(b)} the transformation to new skeleton $B'$ as described in the proof.}
  	\label{fig:descendant}
\end{figure}
\begin{lemma}
	\label{lem:descendant}
	Let $v$ be a non-tree vertex and $x_{T_1}$ be a tree-vertex that is a descendant of $v$ in $B_1$.	 
	If $vx_{T_1}$ is a $2$-edge in $H$ then $x_{T_1}$ is a child of $v$ in $B_1$.
\end{lemma}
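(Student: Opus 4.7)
The plan is to proceed by contradiction using the extremality of the skeleton $B$, exploiting the fact that $B$ has the lexicographically highest configuration vector. Specifically, I will show that if $x_{T_1}$ is a descendant of $v$ in $B_1$ but not a child, then one can perform a local swap that produces a skeleton with a strictly larger configuration vector, contradicting the choice of $B$.

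Concretely, suppose for contradiction that $x_{T_1}$ is not a child of $v$, and let $p = \pa(x_{T_1})$. Since $x_{T_1}$ is a descendant of $v$ but $p \ne v$, the vertex $v$ is a strict ancestor of $p$ in $B$, so $\ell_B(x_{T_1}) \ge \ell_B(v) + 2$. Form a new skeleton $B'$ by removing the edge $\dir{x_{T_1}p}$ and adding the edge $\dir{x_{T_1}v}$ (which exists in $H$ as a 2-edge by hypothesis). This is a legal swap: removing $\dir{x_{T_1}p}$ detaches the subtree $S$ rooted at $x_{T_1}$ from the remainder of $B$, and adding $\dir{x_{T_1}v}$ re-attaches $S$ to $v$, which is an ancestor of $p$ and therefore lies outside $S$. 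Each non-root vertex still has exactly one outgoing edge, so $B'$ is a valid in-arborescence with the same root as $B$.

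Now I compare the configuration vectors of $B$ and $B'$. First, consider $|E_2(B')|$. If $x_{T_1}p$ was a $1$-edge in $H$, then $|E_2(B')| = |E_2(B)| + 1$, giving $B'$ a lex-larger configuration vector and an immediate contradiction. So assume $x_{T_1}p$ is a $2$-edge, in which case $|E_2(B')| = |E_2(B)|$ and the first coordinates agree. Next I look at the level-degree sums. Only the degrees of $v$ and $p$ changed under the swap: $\dgr_{B'}(v) = \dgr_B(v) + 1$ and $\dgr_{B'}(p) = \dgr_B(p) - 1$. Moreover, only the vertices of $S$ change level, and each of them moves \emph{upward} by $\Delta := \ell_B(p) - \ell_B(v) \ge 1$; every other vertex keeps its level. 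Since every vertex of $S$ has level $\ge \ell_B(v) + 2$ in $B$ and level $\ge \ell_B(v) + 1$ in $B'$, no vertex of $S$ occupies any level $\le \ell_B(v)$ in either skeleton. Consequently, for every level $\ell < \ell_B(v)$ the set of vertices at that level is unchanged and no degree on it changes, so those coordinates agree. At level $\ell_B(v)$, the same set of vertices appears in $B$ and $B'$, but $v$'s degree has increased by $1$, hence the sum at level $\ell_B(v)$ is strictly larger in $B'$ than in $B$.

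Thus $B'$ has a lexicographically larger configuration vector than $B$, contradicting the maximality of $B$. The main (and only real) obstacle is verifying that the level-degree sums at all positions strictly above the first change are identical; that reduces to the simple observation that the swap moves the subtree $S$ strictly upward while keeping levels $\le \ell_B(v)$ entirely outside $S$, and touches degrees only at the two endpoints $v$ and $p$.
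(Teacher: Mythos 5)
Your proof is correct and follows essentially the same route as the paper: detach $x_{T_1}$ from its parent, reattach it to $v$ via the 2-edge, and observe that the resulting skeleton has a lexicographically larger configuration vector (either the 2-edge count grows, or the level-degree sum at $\ell_B(v)$ strictly increases while all earlier coordinates are unchanged), contradicting the choice of $B$. Your write-up is in fact somewhat more careful than the paper's about why the shifted subtree cannot affect any coordinate at or below level $\ell_B(v)$.
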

\begin{proof}
	See Figure~\ref{fig:descendant} for an illustration of the proof.
	Suppose $x_{T_1}$ is not a child of $v$ in $B_1$.
	Let $\dir{x_{T_1}z}$ be the outgoing edge of $x_{T_1}$ in $B_1$.
	Let $B'$ be the \emph{skeleton} obtained by deleting  $\dir{x_{T_1}z}$ from $B$ and adding $\dir{x_{T_1}v}$.
	Going from $B$ to $B'$, the number of 2-edges is non-decreasing, the degree of $v$ increases, the degree of $z$ decreases, and the degree of all other
	vertices remains same.
	Since $v$ is at a smaller level than $z$ in both $B$ and $B'$, 
	$B'$ has lexicographically higher configuration vector than $B$.
	Thus we have a contradiction to the choice of $B$.
\end{proof}
\begin{figure}[t]
    \centering
    \begin{subfigure}[b]{0.49\textwidth}
    \centering
        \includegraphics[scale=1,keepaspectratio]{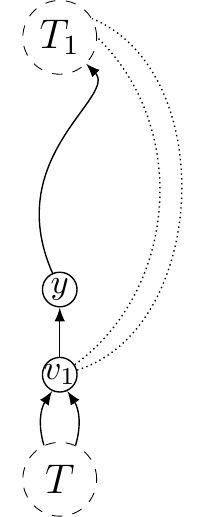}
        \caption{}
        \label{fig:ancestor-2edge-1}
    \end{subfigure}
    \hfill
    \begin{subfigure}[b]{0.49\textwidth}
    \centering
        \includegraphics[scale=1,keepaspectratio]{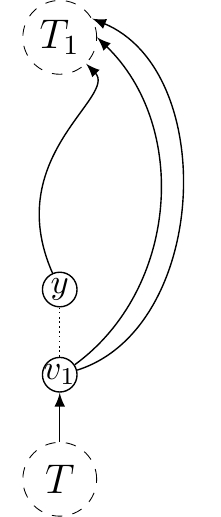}
        \caption{}
        \label{fig:ancestor-2edge-2}
    \end{subfigure}
	\caption{An illustration of the transformation in the proof of Lemma~\ref{lem:v1noedge}. The densely dotted edges denote the edges of $H$ that are not in $B$. \textbf{(a)} The scenario given by the precondition of the lemma, and \textbf{(b)} the transformation to the new skeleton $B'$ as described in the proof.}
  	\label{fig:ancestor-2edge}
\end{figure}
\begin{lemma}
	\label{lem:v1noedge}
	Let $x_T$ be a vertex on which Coloring Rule~\ref{rul:final} is being applied. 
	Let $v_1$ be as defined in Coloring Rule~\ref{rul:final}.
	The vertex $v_1$ has no 2-edge in $H$ to any vertex except $x_T$.
\end{lemma}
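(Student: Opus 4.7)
The plan is to proceed by contradiction, mirroring the swap argument used in Lemma~\ref{lem:descendant}. Suppose $v_1$ has a 2-edge to some $x_{T''} \neq x_T$ in $H$. Since $v_1 \in S$ is a non-tree vertex, any 2-edge incident on $v_1$ in $H$ must have its other endpoint in $V_{\calT}$, and every tree vertex lies in $B_1$. Hence $x_{T''}$ is a tree vertex in $B_1$. I would then split into two cases depending on whether $x_{T''}$ is a descendant of $v_1$ in $B_1$.

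In the first case, where $x_{T''}$ is a descendant of $v_1$ in $B_1$, I apply Lemma~\ref{lem:descendant} to conclude that $x_{T''}$ must be a child of $v_1$ in $B_1$. But by Lemma~\ref{lem:rul1-8} and the setup of Coloring Rule~\ref{rul:final}, $v_1$ has degree exactly $2$ in $B_1$, with $y$ as its parent and $x_T$ as its only child. Thus $x_{T''} = x_T$, contradicting our assumption.

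In the second case, where $x_{T''}$ is not a descendant of $v_1$ in $B_1$, I construct a new skeleton $B'$ from $B$ by deleting the outgoing edge $\dir{v_1 y}$ of $v_1$ (which is a 1-edge, as noted in Coloring Rule~\ref{rul:final}) and adding the 2-edge $\dir{v_1 x_{T''}}$. Since $x_{T''}$ does not lie in the subtree of $v_1$ in $B$, this swap reattaches $v_1$'s subtree to the component containing the root without creating a cycle, yielding a valid in-arborescence with the same root. The swap replaces a 1-edge by a 2-edge, so $|E_2(B')| = |E_2(B)| + 1$, which gives $B'$ a strictly lexicographically higher configuration vector than $B$, contradicting the choice of $B$.

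The main technical point to verify carefully is that the swap in the second case indeed produces a valid in-arborescence; this follows exactly as in the proof of Lemma~\ref{lem:descendant} once we know $x_{T''}$ lies outside $v_1$'s subtree. The rest of the argument is bookkeeping about which vertices are tree vertices and which edges are 1-edges versus 2-edges, all of which is immediate from the setup of Coloring Rule~\ref{rul:final} together with Lemmas~\ref{lem:rul1-8} and~\ref{lem:rul1-9}.
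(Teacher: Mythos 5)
Your proposal is correct and follows essentially the same route as the paper: rule out the descendant case via Lemma~\ref{lem:descendant} (which forces $x_{T''}$ to be the child $x_T$, since $v_1$ has degree $2$ in $B_1$), and in the remaining case swap the $1$-edge $\dir{v_1y}$ for the $2$-edge $\dir{v_1x_{T''}}$ to contradict the lexicographic maximality of $B$'s configuration vector. The paper merely compresses your Case~1 into a single invocation of Lemma~\ref{lem:descendant}, so the two arguments coincide.
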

\begin{proof}
	See Figure~\ref{fig:ancestor-2edge} for an illustration of the proof.
	Suppose for the sake of contradiction that $v_1$ has a 2-edge in $H$ to a tree vertex $x_{T_1}\in V(H)\setminus \{x_T\}$.
	The edge $v_1y$ is a $1$-edge as otherwise Coloring Rule~\ref{rul:treevertexdeg2par2edge} would have been applicable on $x_T$ and $x_T$ would have been completed already.
	Thus $x_{T_1}\neq y$. Then the edge $v_1x_{T_1}$ is not in $B_1$ as $y$ and $x_T$ are the only neighbors of $v_1$ in $B_1$. 
	Since $x_{T_1}$ is a tree vertex, it is not in $L_S$ (recall that $L_S$ is the set of non-tree vertices of $H$). 
	Thus, since the edge $v_1x_{T_1}$ is not in $B_1$, it is not in $B$ also (recall $B_1=B\setminus L_S$).
Thus, $v_1x_{T_1}\in E(H)\setminus E(B)$.
Also $x_{T_1}$ is not a descendant of $v_1$ due to Lemma~\ref{lem:descendant}.
Thus, $x_{T_1}\in \st{v_1}{y}$.
	Then, by deleting  the 1-edge $\dir{v_1y}$ from $B$ and adding the 2-edge $\dir{v_1x_{T_1}}$,
	we get a skeleton $B'$ that has a higher number of $2$-edges than $B$ and hence has a lexicographically higher configuration vector.
	Thus we have a contradiction to the choice of $B$.
\end{proof}
\begin{figure}[t]
    \centering
    \begin{subfigure}{0.49\textwidth}
    \centering
        \includegraphics[scale=1,keepaspectratio]{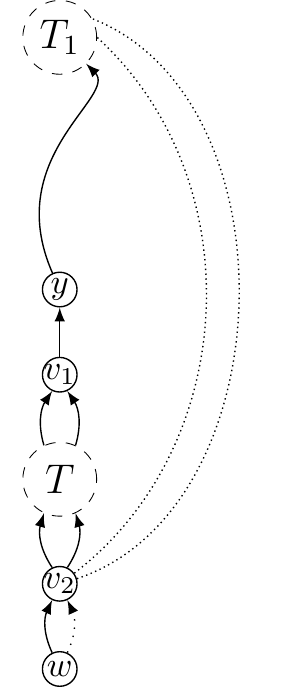}
        \caption{}
        \label{fig:ancestor-v2-1}
    \end{subfigure}
    \hfill
    \begin{subfigure}{0.49\textwidth}
    \centering
        \includegraphics[scale=1,keepaspectratio]{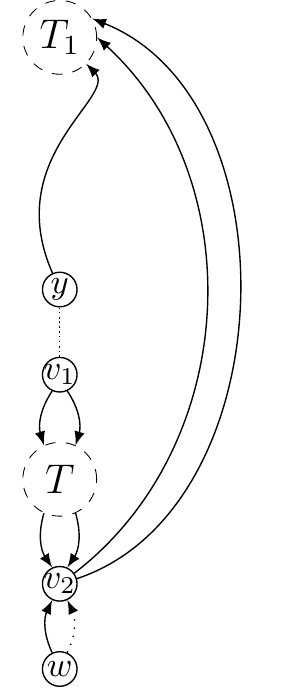}
        \caption{}
        \label{fig:ancestor-v2-2}
    \end{subfigure}
	\caption{An illustration of the transformation in the proof of Lemma~\ref{lem:v2noedge}. The densely dotted edges denote the edges of $H$ that are not in $B$ and the edge $wv_2$ is drawn with 1 solid line and 1 dotted line to denote that it could be a 1-edge or a 2-edge.
	\textbf{(a)} The scenario given by the precondition of the lemma, and \textbf{(b)} the transformation to the new skeleton $B'$ as described in the proof.}
  	\label{fig:ancestor-v2}
\end{figure}
\begin{lemma}
	\label{lem:v2noedge}
	Let $x_T$ be a vertex on which Coloring Rule~\ref{rul:final} is being applied. 
	Let $v_2,w$ be as defined in Coloring Rule~\ref{rul:final}.
	Then, $v_2$ has no 2-edge in $H$ to any vertex in $V(H)\setminus \left\{ w,x_T \right\}$.
\end{lemma}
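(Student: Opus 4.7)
The plan is to mirror the argument of Lemma~\ref{lem:v1noedge}: assume for contradiction that $v_2$ has a $2$-edge $v_2 x_{T_1}$ in $H$ with $x_{T_1} \notin \{w, x_T\}$, and construct from $B$ a skeleton $B'$ with strictly more $2$-edges, contradicting the lexicographic maximality of the configuration vector of $B$.

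First I would locate $x_{T_1}$. Since $v_2$ is a non-tree vertex and $v_2 x_{T_1}$ is a $2$-edge, $x_{T_1}$ must be a tree vertex, so $x_{T_1} \notin L_S$ and in particular $x_{T_1} \in V(B_1)$. If $x_{T_1}$ were a descendant of $v_2$ in $B_1$, then Lemma~\ref{lem:descendant} would force $x_{T_1}$ to be a $B_1$-child of $v_2$; but by the setup of Coloring Rule~\ref{rul:final} together with Lemmas~\ref{lem:rul1-8} and~\ref{lem:rul1-9}, $v_2$ has $B_1$-degree exactly two, with parent $x_T$ and unique $B_1$-child $w$, and this would give $x_{T_1}=w$, contradicting our assumption. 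Hence $x_{T_1} \in \st{v_2}{x_T}$ with $x_{T_1}\neq x_T$. The unique path $P$ in $\und{B_1}$ from $v_2$ to $x_{T_1}$ is then forced to begin $v_2, x_T, v_1, y, \ldots, x_{T_1}$, because $x_T$ has $B_1$-neighbors only $v_1$ and $v_2$, while $v_1$ has $B_1$-neighbors only $x_T$ and $y$. The edge $v_1 y$ lies on $P$ and is a $1$-edge, as explicitly recorded in the setup of Coloring Rule~\ref{rul:final} (otherwise Coloring Rule~\ref{rul:treevertexdeg2par2edge} would already have completed $x_T$). Since $x_{T_1}\notin L_S$, this path in $\und{B_1}$ coincides with the corresponding path in $\und{B}$.

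Finally I would perform the swap: let $B'$ be obtained from $B$ by deleting $\dir{v_1 y}$ and adding $v_2 x_{T_1}$, with edges directed so that re-rooting at the original root of $B$ yields an in-arborescence. Since $v_1 y$ lies on the fundamental cycle of $v_2 x_{T_1}$ with respect to $\und{B}$, the result is a spanning tree of $H$, hence a valid skeleton. A $1$-edge was removed and a $2$-edge was added, so $|E_2(B')| = |E_2(B)| + 1$, giving $B'$ a strictly larger first entry, and hence a lexicographically larger configuration vector, than $B$---contradicting the choice of $B$. The main subtlety I anticipate is precisely verifying that $P$ is forced through the $1$-edge $v_1 y$: this rests on the rigid $B_1$-degree-$2$ structure around $x_T$ (Lemmas~\ref{lem:rul1-8} and~\ref{lem:rul1-9}) together with the fact that $x_{T_1}$ is a tree vertex, so the path cannot short-circuit through any leaf in $L_S$.
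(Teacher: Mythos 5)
Your proposal is correct and matches the paper's proof essentially step for step: the same contradiction setup, the same use of Lemma~\ref{lem:descendant} to rule out $x_{T_1}$ being a descendant of $v_2$, the same degree-two argument forcing the $1$-edge $v_1y$ onto the path from $v_2$ to $x_{T_1}$, and the same edge swap producing a skeleton with strictly more $2$-edges. The extra care you take in verifying that the swap yields a valid spanning tree and that the $B_1$-path coincides with the $B$-path is implicit in the paper but correctly spelled out here.
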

\begin{proof}
	See Figure~\ref{fig:ancestor-v2} for an illustration of the proof.
	Suppose for the sake of contradiction that $v_2$ has a 2-edge in $H$ to a tree vertex $x_{T_1}\in V(H)\setminus \{x_T,w\}$.
	Then the edge $v_2x_{T_1}$ is not in $B_1$ as the only neighbors of $v_2$ in $B_1$ are $x_T$ and $w$.
	Since $x_{T_1}$ is a tree vertex, it is not in $L_S$ (recall that $L_S$ is the set of non-tree vertices of $H$). 
	Thus, since the edge $v_2x_{T_1}$ is not in $B_1$, it is not in $B$ also (recall $B_1=B\setminus L_S$).
Thus, $v_2x_{T_1}\in E(H)\setminus E(B_1)$.
Also $x_{T_1}$ is not a descendant of $v_2$ due to Lemma~\ref{lem:descendant}.
Clearly, then $x_{T_1}\in \st{v_2}{x_T}$. 
	Since $x_{T_1}\neq x_T$, and the degree of $x_T$ and $v_1$ in $B_1$ is $2$, we have that the edge $v_1y$ is on the path from $v_2$ to $x_{T_1}$ in $B_1$.
	Then, by deleting the $1$-edge $v_1y$ from $B$ and adding the $2$-edge $v_2x_{T_1}$, we get a 
	skeleton $B'$ that has a higher number of $2$-edges and hence has a lexicographically higher configuration vector.
	Thus we have a contradiction to the choice of $B$.
\end{proof}
\begin{figure}
    \centering
    \begin{subfigure}[b]{0.49\textwidth}
    \centering
        \includegraphics[scale=1,keepaspectratio]{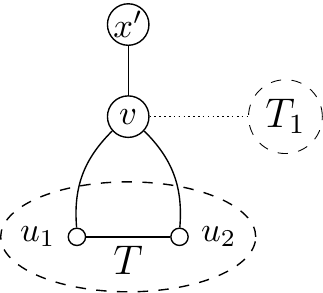}
        \caption{}
        \label{fig:tree-change-1}
    \end{subfigure}
    \hfill
    \begin{subfigure}[b]{0.49\textwidth}
    \centering
        \includegraphics[scale=1,keepaspectratio]{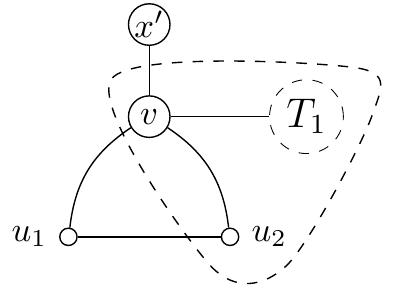}
        \caption{}
        \label{fig:tree-change-2}
    \end{subfigure}
	\caption{An illustration of the transformation in the proof of Lemma~\ref{lem:noedgetotree}. The densely dotted edges denote the edges of $H$ that are not in $B$.
	\textbf{(a)} The scenario given by the precondition of the lemma, and \textbf{(b)} the transformation to the new forest $\calF'$ as described in the proof where $T$ is removed and a new tree including $v,u_2$ and the vertices of $T_1$ are added.}
  	\label{fig:tree-change}
\end{figure}
\begin{lemma}
	\label{lem:noedgetotree}
	Let $v$ be a non-tree vertex having degree $2$ in $B_1$ such that the neighbors of $v$ in $B_1$ are a tree-vertex $x_T$ and a vertex $x'$, and the tree $T$ consists of a single edge $u_1u_2$. If $v_1$ does not have a $2$-edge to any tree-vertex except $x_T$ in $H$, then $v_1$ does not have edges to any tree vertex in $H$ except $x_T$.
\end{lemma}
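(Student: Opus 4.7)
The plan is to argue by contradiction, exploiting the fact that $\calF$ was chosen as a maximum induced forest with the fewest connected components. Suppose $v$ has some edge in $H$ to a tree-vertex $x_{T_1}$ with $T_1 \neq T$; by the hypothesis of the lemma this edge cannot be a $2$-edge, so it is a $1$-edge, which means $v$ has a unique neighbor $w_1 \in V(T_1)$ in $G$. I would then construct a new candidate induced forest $\calF'$ by swapping $u_2$ for $v$, i.e., $V(\calF') := (F \setminus \{u_2\}) \cup \{v\}$, so that $|V(\calF')| = |F| = \f(G)$.

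The heart of the argument is to verify that $\calF'$ is an induced forest with strictly fewer components than $\calF$. For the forest property, note that $G[F \setminus \{u_2\}]$ is obtained from $\calF$ by removing the leaf $u_2$ of the edge-tree $T$, leaving $u_1$ as an isolated vertex, and is therefore still a forest. The only new edges in $G[V(\calF')]$ are those incident on $v$; since $vx_T$ is a $2$-edge and $V(T) = \{u_1, u_2\}$, one such edge goes to $u_1$, and by the hypothesis that $v$ has no $2$-edge to any tree other than $T$, $v$ has at most one neighbor in each component $T_i \neq T$ of $\calF$. Attaching $v$ to the disjoint union of trees in $\calF \setminus \{u_2\}$ via at most one edge per component cannot create a cycle, so $\calF'$ is indeed a forest.

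Finally, the connected component of $v$ in $\calF'$ contains $u_1$ (what remained of $T$) as well as $w_1 \in V(T_1)$, so the components of $\calF$ corresponding to $T$ and $T_1$ are merged into a single component of $\calF'$. Hence $\calF'$ is a maximum induced forest with strictly fewer connected components than $\calF$, contradicting the minimality in our choice of $\calF$. The main subtlety to get right is the acyclicity check when $v$ has several $1$-edges to different trees of $\calF$ simultaneously; this is handled precisely because distinct trees of $\calF$ are vertex-disjoint, and the hypothesis forbids $v$ from contributing more than one edge into any single tree other than $T$.
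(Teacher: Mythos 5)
Your proposal is correct and follows essentially the same argument as the paper: swap one endpoint of the single edge of $T$ for $v$ to obtain an induced forest of the same size, observe that acyclicity holds because $v$ contributes at most one edge into each remaining component (no $2$-edges to trees other than $T$, and only one vertex of $T$ survives), and conclude that $T$ and $T_1$ merge, contradicting the choice of $\calF$ as a maximum induced forest with the fewest components. The only differences are cosmetic (the paper deletes $u_1$ rather than $u_2$, and phrases the acyclicity check as a case analysis on a hypothetical cycle), and you correctly make explicit the reliance on $vx_T$ being a $2$-edge, which the paper leaves implicit.
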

\begin{proof}
	See Figure \ref{fig:tree-change} for an illustration.
	Suppose $v$ has an edge in $H$ to a tree vertex $x_{T_1}\neq x_T$.
	Note that $vx_{T_1}$ is not a $2$-edge by assumption. 
	Thus $vx_{T_1}$ is a $1$-edge. 
	We define the forest $\calF'$ as $F':=(F\setminus \left\{ u_1 \right\})\cup\{v\}$ and $\calF':=G[F']$.
	We prove that $\calF'$ is a forest with fewer trees than 
	$\calF$, which is a contradiction to the choice of $\calF$.
	(Recall that out of all maximum induced forests, we picked $\calF$ to be one having the fewest number of trees).

	First, we prove that $\calF'$ is indeed a forest. 
	Suppose for the sake of contradiction that there is a cycle $C$ in $\calF'$.
	The cycle $C$ has to contain $v$ because otherwise $C$ is also a cycle in $\calF$. 
	The cycle $C$ can intersect at most one tree in $\calT$ as there are no edges across the trees.
	Let this tree be $T_2$.
	Then $v$ should have two edges to $T_2$ in $C$ in order to complete the cycle.
	We know that $v$ does not have 2-edges to any tree in $\calT\setminus \{T\}$ by assumption.
	Hence $T_2=T$.
	But, since $|V(T)\cap F'|=1$, $v$ can only have one edge in $C$ to $T$. 
	Thus, we have a contradiction. 
	Hence, $\calF'$ is indeed a forest.

	Now, we show that the number of trees in $\calF'$ is smaller than that of $\calF$.
	Let $\calT_1$ be the set of trees in $\calT\setminus \{T\}$ that have an edge from $v$ in $G$.
	Clearly, $T_1\in\calT_1$ and hence $|\calT_1|\ge 1$.
	The vertex set $\left\{ u_2,v \right\}\cup\bigcup_{T''\in \calT_1}V(T'')$ induces a tree in $G$.
	Hence, the number of trees in $\calF'$ is at most $|\calT\setminus (\calT_1\cup \left\{ T \right\})|+1\le |\calT| -|\calT_1|\le |\calT|-1$.
\end{proof}
\begin{lemma}
	\label{lem:v1noedgetotree}
	Let $x_T$ be a vertex on which Coloring Rule~\ref{rul:final} is being applied. 
	Let $v_1,v_2,y,w$ be as defined in Coloring Rule~\ref{rul:final}.
	\begin{enumerate}
			\item
	$v_1$ has no edge in $H$ to any tree vertex except $x_T$ (which also implies that $y$ is a non-tree vertex), and
	\item
	if $wv_2$ is a $1$-edge, vertex $v_2$ has no edge in $H$ to any tree vertex except $x_T$ (which also implies that $w$ is a non-tree vertex in this case).
	\end{enumerate}
\end{lemma}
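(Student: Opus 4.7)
The plan is to reduce both parts of the lemma to Lemma~\ref{lem:noedgetotree}, using Lemmas~\ref{lem:v1noedge} and~\ref{lem:v2noedge} to check the $2$-edge hypothesis.

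For part~(1), I apply Lemma~\ref{lem:noedgetotree} to the vertex $v_1$. Since Coloring Rule~\ref{rul:final} is being applied on $x_T$, Lemma~\ref{lem:rul1-8} gives that $v_1$ has degree exactly $2$ in $B_1$, and by construction its two neighbors in $B_1$ are the tree vertex $x_T$ and a vertex $y$. Also, by the setup of Coloring Rule~\ref{rul:final} (which invokes Lemma~\ref{lem:rul1-9}), the tree $T$ consists of a single edge $u_1u_2$. The remaining hypothesis of Lemma~\ref{lem:noedgetotree}, namely that $v_1$ has no $2$-edge in $H$ to any tree vertex other than $x_T$, is provided by Lemma~\ref{lem:v1noedge}. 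Hence Lemma~\ref{lem:noedgetotree} concludes that $v_1$ has no edge in $H$ to any tree vertex except $x_T$. Since $v_1y \in E(H)$, this forces $y$ to be a non-tree vertex.

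For part~(2), I argue in the same way but applied to $v_2$, under the extra hypothesis that $wv_2$ is a $1$-edge. Lemma~\ref{lem:rul1-8} again gives that $v_2$ has degree exactly $2$ in $B_1$, with $B_1$-neighbors $x_T$ (tree-vertex) and $w$, and $T$ is the single edge $u_1u_2$. To apply Lemma~\ref{lem:noedgetotree} I need $v_2$ to have no $2$-edge in $H$ to a tree vertex other than $x_T$. Lemma~\ref{lem:v2noedge} already says $v_2$ has no $2$-edge in $H$ to any vertex in $V(H)\setminus\{w,x_T\}$, so the only potential bad $2$-edge is $v_2w$; but that edge is a $1$-edge by hypothesis. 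Thus $v_2$ has no $2$-edge to any tree vertex except $x_T$, and Lemma~\ref{lem:noedgetotree} applies, giving that $v_2$ has no edge in $H$ to any tree vertex except $x_T$. In particular, since $wv_2 \in E(H)$, the vertex $w$ is a non-tree vertex.

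The only subtlety (and the main potential obstacle) is making sure the hypotheses of Lemma~\ref{lem:noedgetotree} are satisfied in the right form: specifically, that we really are applying it in a situation where the relevant vertex has $B_1$-degree exactly $2$ with one tree-vertex neighbor whose tree is a single edge. Both conditions are available directly from Lemmas~\ref{lem:rul1-8} and~\ref{lem:rul1-9}, so no further combinatorial argument is needed beyond quoting these lemmas together with Lemmas~\ref{lem:v1noedge} and~\ref{lem:v2noedge}.
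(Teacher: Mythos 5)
Your proposal is correct and follows exactly the paper's argument: the paper also derives part (1) from Lemmas~\ref{lem:v1noedge} and~\ref{lem:noedgetotree} (together with the fact that $v_1y$ is a $1$-edge) and part (2) from Lemmas~\ref{lem:v2noedge} and~\ref{lem:noedgetotree}. Your write-up merely spells out the verification of the hypotheses of Lemma~\ref{lem:noedgetotree} that the paper leaves implicit.
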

\begin{proof}
	The first statement follows from Lemmas~\ref{lem:v1noedge}, and \ref{lem:noedgetotree} and the fact that $v_1y$ is a $1$-edge.
	The second statement follows from Lemmas \ref{lem:v2noedge} and \ref{lem:noedgetotree}.
\end{proof}
\begin{lemma}
	\label{lem:v2incoming}
	Let $x_T$ be a vertex on which Coloring Rule~\ref{rul:final} is being applied and suppose the precondition of Case $1$ of the rule is \emph{not} satisfied.
	Let $v_2$ and $w$ be as defined in Coloring Rule~\ref{rul:final}.
	Then, $wv_2$ is a 2-edge.
\end{lemma}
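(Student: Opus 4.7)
The plan is to argue by contradiction: assume $wv_2$ is a $1$-edge, and exhibit an induced forest of $G$ on $\f(G)+1$ vertices, contradicting the maximality of $\calF$.

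First, recall that in the setup of Coloring Rule~\ref{rul:final} the edge $v_1y$ is already established to be a $1$-edge, so Lemma~\ref{lem:v1noedgetotree}(1) applies unconditionally, and under our standing assumption that $wv_2$ is a $1$-edge, Lemma~\ref{lem:v1noedgetotree}(2) applies as well. Together they assert that neither $v_1$ nor $v_2$ has an edge in $H$ to any tree vertex other than $x_T$. Because $e_1 = v_1x_T$ and $e_2 = v_2x_T$ are $2$-edges of $H$, each of $v_1, v_2$ has at least two neighbors in $V(T) = \{u_1, u_2\}$, so both are adjacent in $G$ to each of $u_1$ and $u_2$. Moreover, since Case~1 of Coloring Rule~\ref{rul:final} does not apply, $v_1v_2 \notin E(G)$.

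Now set $F' := (F \setminus \{u_1\}) \cup \{v_1, v_2\}$ and consider $\calF' := G[F']$. The only non-tree vertices of $G$ lying in $F'$ are $v_1$ and $v_2$. By the previous paragraph, the only neighbor of $v_1$ within $F'$ is $u_2$, since $v_1$'s edges into $F$ go only to $V(T) = \{u_1, u_2\}$ and $u_1 \notin F'$; the same holds for $v_2$; and $v_1v_2 \notin E(G)$. Therefore $\calF'$ is obtained from $\calF$ by deleting $u_1$ (and the single incident edge $u_1u_2$) and then attaching the two pendants $v_1u_2$ and $v_2u_2$ at $u_2$, so $\calF'$ is again a forest. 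But $|F'| = |F| + 1 = \f(G) + 1$, contradicting the maximality of $\calF$. The only non-routine step, and the main obstacle one needs to be careful with, is ruling out edges from $v_1$ or $v_2$ to trees of $\calT$ other than $T$ (which could create cycles through those trees); this is precisely what the already established Lemma~\ref{lem:v1noedgetotree} gives us.
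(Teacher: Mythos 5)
Your proof is correct and follows essentially the same route as the paper: the identical exchange $F' = (F\setminus\{u_1\})\cup\{v_1,v_2\}$, justified by Lemma~\ref{lem:v1noedgetotree} and the failure of Case~1 of Coloring Rule~\ref{rul:final}, yielding a larger induced forest. The only (cosmetic) difference is that you verify acyclicity of $\calF'$ by directly pinning down the $F'$-neighborhoods of $v_1$ and $v_2$ as $\{u_2\}$, whereas the paper argues by case analysis on a hypothetical cycle; both rest on the same facts.
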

\begin{proof}
	Suppose for the sake of contradiction that $wv_2 $ is a $1$-edge.
	Let $v_1,u_1,u_2$ be also as given in Coloring Rule~\ref{rul:final} (see Figure~\ref{fig:final}).
	Let $F'=\left( F\setminus\left\{ u_1 \right\}\cup\left\{ v_1,v_2 \right\} \right)$.
	Let $\calF'=G[F']$.
	We will show that $\calF'$ is a forest. 
	Then since $|F'|>|F|$, we have that $\calF$ is not a maximum induced forest, a contradiction. 

	Suppose for the sake of contradiction that there is a cycle $C$ in $\calF'$.

	\noindent {\bf Case 1:}\ $C$ intersects more than two trees of $\calT$.
	
	 Since there are only two vertices in $F'$ that are not in any tree in $\calT$, there have to be two vertices in $2$ different trees of $\calT$ that are adjacent in $C$.
	This is a contradiction as the trees are connected components of $\calF$ and therefore have no edges between them in $G$. 

	\noindent {\bf Case 2:}\ $C$ intersects exactly two trees of $\calT$.
	
	Let $T_1$ and  $T_2$ be the trees that $C$ intersects.
	Since $wv_2$ is a $1$-edge, by Lemma~\ref{lem:v1noedgetotree}, we have that both $v_1$ and $v_2$ are not adjacent in $H$ to any tree vertex except $x_T$.
	Hence, at least one of $T_1$ and $T_2$ has no edges to $v_1$ and $v_2$.
	This means that there should be an edge in $C$ between $T_1$ and $T_2$.
	This is a contradiction as $T_1$ and $T_2$ are connected components of $\calF$ and therefore have no edges between them in $G$. 

	\noindent {\bf Case 3:}\ $C$ intersects exactly one tree of $\calT$.
	
	Let $T_1$ be the tree that $C$ intersects.
	By Lemma~\ref{lem:v1noedgetotree}, we have that $v_1$ and $v_2$ do not have an edge to any tree vertex in $B_1$ except $x_T$.
	Hence, $T_1=T$.
	Since only one vertex of $T$ is in $F'$, $v_1$ and $v_2$ both can have at most one edge to $T$ in $C$.
	This means $v_1$ and $v_2$ should be adjacent in $C$ to complete the cycle.
	Thus, the precondition of Case 1 of Rule~\ref{rul:final} is satisfied,
	a contradiction to our assumption.
\end{proof}

\begin{figure}
    \centering
    \begin{subfigure}[b]{0.49\textwidth}
    \centering
        \includegraphics[scale=1,keepaspectratio]{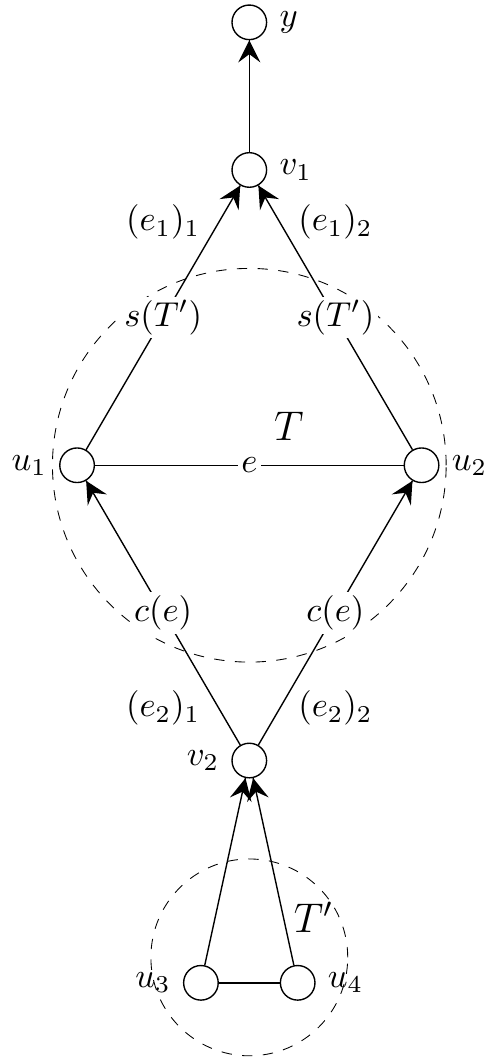}
        \caption{}
        \label{fig:forest-selection-1}
    \end{subfigure}
    \hfill
    \begin{subfigure}[b]{0.49\textwidth}
    \centering
        \includegraphics[scale=1,keepaspectratio]{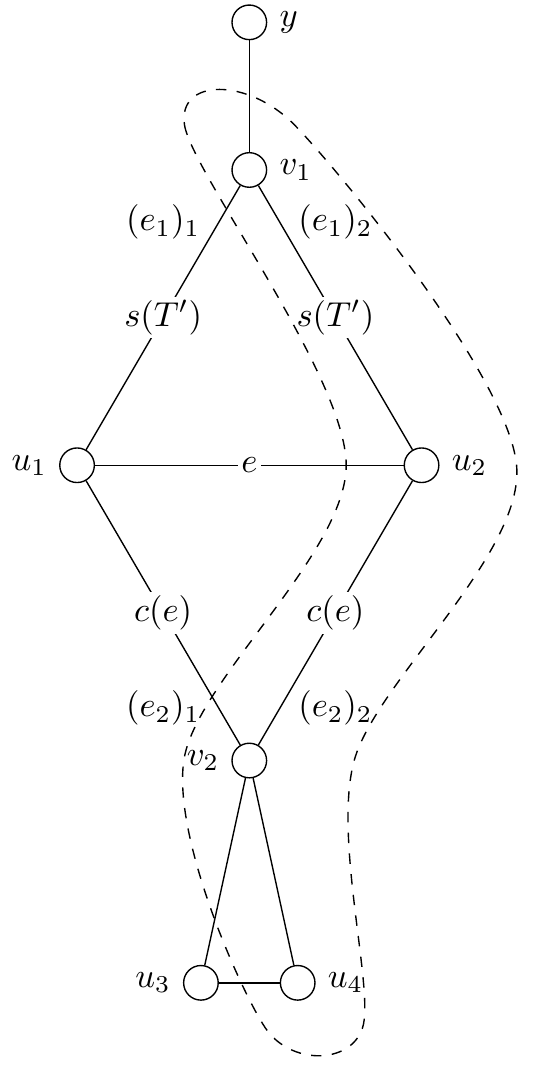}
        \caption{}
        \label{fig:forest-selection-2}
    \end{subfigure}
	\caption{An illustration of the transformation in the proof of Lemma~\ref{lem:Tdashdegree3}. 
	\textbf{(a)} The initial scenario before transformation, and \textbf{(b)} the transformation to the new forest $\calF'$ as described in the proof where $T$ and $T'$ are removed and a new tree including $u_4,u_2,v_1,v_2$ is added, thereby reducing the number of trees.}
  	\label{fig:forest-selection}
\end{figure}
\begin{lemma}
	\label{lem:Tdashdegree3}
	Let $x_T$ be a vertex on which Coloring Rule~\ref{rul:final} is being applied and suppose the precondition of Case $1$ of the rule is not satisfied.
	Let $T'$ be as defined in Coloring Rule~\ref{rul:final} (see Figure~\ref{fig:final-2}).
	Then,  $x_{T'}$ is not a vertex on which Coloring Rule~\ref{rul:treevertexdeg2par2edge} was applied.
\end{lemma}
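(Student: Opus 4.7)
The plan is to contradict the choice of $\calF$---recall it was chosen to be a maximum induced forest with the smallest number of connected components---by exhibiting a maximum induced forest with strictly fewer components. Suppose for contradiction that Coloring Rule~\ref{rul:treevertexdeg2par2edge} was applied on $x_{T'}$. Since that rule is applicable only to a tree vertex whose tree consists of a single edge (by its description, which invokes Lemma~\ref{lem:rul1-9}), I would write $V(T') = \{u_3, u_4\}$; as $wv_2 = x_{T'}v_2$ is a $2$-edge, $v_2$ is adjacent in $G$ to both $u_3$ and $u_4$.

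Next I would form the candidate forest
\[
F' \;:=\; (F \setminus \{u_1, u_3\}) \cup \{v_1, v_2\}, \qquad \calF' \;:=\; G[F'].
\]
Note that $|F'| = |F|$, so once $\calF'$ is shown to be a forest it is automatically a maximum induced forest.

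The main obstacle is verifying acyclicity of $\calF'$. By Lemma~\ref{lem:v1noedgetotree}, $v_1$ has no edge in $H$ to any tree vertex other than $x_T$; combined with the assumption (from the failure of Case~1 of Rule~\ref{rul:final}) that $v_1 v_2 \notin E(G)$, the only neighbor of $v_1$ in $\calF'$ is $u_2$, so $v_1$ is a leaf and cannot lie on any cycle. Any would-be cycle must therefore go through $v_2$. The neighbors of $v_2$ in $F'$ are $u_2$ (from which every path leads back only through the leaf $v_1$), $u_4$ (which is isolated in $T' \cap F'$ since $u_3$ has been removed), and---by Lemma~\ref{lem:v2noedge}, which forbids further $2$-edges on $v_2$---at most one vertex in each other tree of $\calT$ via a single $1$-edge. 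Since $v_1$ has no edges to those other trees and distinct trees of $\calF$ are pairwise non-adjacent in $G$, there is no second path from $v_2$ back to any of these neighbors, so no cycle is formed.

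Finally, I would count components. In $\calF'$ the vertices $\{v_1, u_2, v_2, u_4\}$ form the path $v_1 \!-\! u_2 \!-\! v_2 \!-\! u_4$, so the remnants of $T$ and $T'$ are united into a single component (possibly merged further with other trees via $v_2$'s $1$-edges, which only decreases the component count more). Hence $\calF'$ is a maximum induced forest with strictly fewer connected components than $\calF$, contradicting our choice of $\calF$.
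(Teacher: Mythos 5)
Your proposal is correct and follows essentially the same route as the paper: the same exchange $F' = (F\setminus\{u_1,u_3\})\cup\{v_1,v_2\}$, the same reliance on Lemmas~\ref{lem:v1noedgetotree} and~\ref{lem:v2noedge} plus the non-adjacency of $v_1$ and $v_2$, and the same component count via the path $v_1u_2v_2u_4$. The only difference is cosmetic: you verify acyclicity by a local degree argument at $v_1$ and $v_2$, whereas the paper cases on how many trees of $\calT$ a hypothetical cycle would intersect.
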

\begin{proof}
	Suppose for the sake of contradiction that Coloring Rule~\ref{rul:treevertexdeg2par2edge} was applied on $x_{T'}$. 
	Then $x_{T'}$ has degree $2$ in $B_1$ and $T'$ consists of a single edge.	
	Let this edge be $u_3u_4$ (see Figure~\ref{fig:forest-selection-1}).
	Observe that the representatives of the outgoing edge of $x_{T'}$ are $u_3v_2$ and $u_4v_2$.
	Let $F':=\left( F\setminus\left\{ u_1,u_3 \right\}\cup\left\{ v_1,v_2 \right\} \right)$ and
	$\calF':=G[F']$.
	Note that $|F'|=|F|$.
	We will show that $\calF'$ is a forest with fewer trees than $\calF$, thereby showing a contradiction to the choice of $\calF$.
	(Recall that out of all maximum induced forests, we picked $\calF$ to be one having the fewest number of trees).

	First, we show that $\calF'$ is indeed a forest.
	Suppose for the sake of contradiction that there is a cycle $C$ in $\calF'$.

	\noindent {\bf Case 1:}\ $C$ intersects more than two trees of $\calT$.
	
	Since there are only two vertices in $F'$ that are not in any tree in $\calT$, there have to be two vertices in two different trees of $\calT$ that are adjacent in $C$.
	This is a contradiction as the trees are connected components of $\calF$ and therefore have no edges between them in $G$. 

	\noindent {\bf Case 2:}\ $C$ intersects exactly two trees of $\calT$.
	
	Let $T_1$ and  $T_2$ be the trees that $C$ intersects.
	Then in order to complete the cycle $C$, there should be at least one edge from each of $v_1$ and $v_2$ to each of the trees $T_1$ and $T_2$.
	This implies $v_1$ has edges to each of $x_{T_1}$ and $x_{T_2}$ in $H$.
	But, by Lemma~\ref{lem:v1noedgetotree}, we have that $v_1$ does not have an edge to any tree vertex in $H$ except $x_T$.
	Therefore, $x_{T_1}=x_{T_2}$, implying that $T_1=T_2$, implying that $C$ intersects only one tree, a contradiction.

	\noindent {\bf Case 3:}\ $C$ intersects exactly one tree of $\calT$. 
	
	Let $T_1$ be the tree that $C$ intersects.
	Since the precondition of Case 1 of Rule~\ref{rul:final} is not satisfied, $v_1$ and $v_2$ are not adjacent in $G$.
	Hence, only one of them is in $C$.

	\noindent {\bf Case 3.1}\ $v_1$ is in $C$.
	
	Then, $v_1$ should have two edges in $C$ to $T_1$ in order to complete the cycle. 
	This means that $v_1$ has a $2$-edge to $x_{T_1}$ in $H$.
	By Lemma~\ref{lem:v1noedgetotree}, we have that $v_1$ does not have an edge in $H$ to any tree vertex except $x_T$.
	Hence, $T_1=T$.
	Since only one vertex of $T$ is in $F'$, $v_1$ can have at most one edge to $T$ in $C$.
	Thus we have a contradiction.

	\noindent {\bf Case 3.2}\ $v_2$ is in $C$.
	
	Then, $v_2$ should have two edges in $C$ to $T_1$ in order to complete the cycle. 
	This means that $v_2$ has a $2$-edge to $x_{T_1}$ in $H$.
	By Lemma~\ref{lem:v2noedge}, we have that $v_2$ does not have an edge in $H$ to any tree vertex in $B_1$ except $x_T$ and $x_{T'}$.
	Hence, $T_1\in \{T,T'\}$.
	However, since only one vertex of each $T$ and $T'$ is in $F'$, $v_1$ can have at most one edge to each of $T$ and $T'$ in $C$.
	Hence, $v_2$ has at most one edge to $T_1$ in $G$.
	Thus, we have a contradiction.

	Thus, we have proved that $\calF'$ is indeed a forest.
	Now, we prove that $\calF'$ has fewer trees than $\calF$, which concludes the proof.
	Let $\calT_1$ be the set of trees in $\calT\setminus \{T,T'\}$ that have an edge from $v_1$ or $v_2$ in $G$.
	The vertex set $\left\{ v_1,u_2,v_2,u_4 \right\}\cup\bigcup_{T''\in \calT_1}V(T'')$ induces a tree in $G$.
	Hence, the number of trees in $\calF'$ is at most $|\calT\setminus (\calT_1\cup \left\{ T,T'\right\})|+1\le  |\calT|-1$.
\end{proof}

%
Thus we have proved the Lemmas that we used in Coloring Rule~\ref{rul:final}.

By the end of Coloring Rule~\ref{rul:final}, we have colored the representatives of all edges in $B_1$.
We may have also colored some additional edges of $G$ that are not in $B_1$,  namely the shortcut edges (during Coloring Rule \ref{rul:final}).
We next show that the vertices in $B_1$ are now rainbow connected through these colored edges.
\begin{lemma}
	\label{lem:b1path}
	For any pair of vertices $v_1,v_2\in V(G)\setminus L_S$, $P_{ab}$ is a rainbow path between $v_1$ and $v_2$ in $G$ and uses only colors in $[f]$. 
\end{lemma}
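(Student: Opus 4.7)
The plan is to verify three things for each pair $a,b \in V(G) \setminus L_S$: (i) $P_{ab}$ is a path from $a$ to $b$ in $G$; (ii) $P_{ab}$ is rainbow; and (iii) every edge of $P_{ab}$ has a color in $[f]$.

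First, I would establish that every edge of $B_1$ has been colored by the end of Coloring Rule~\ref{rul:final}. This follows by going down the list of coloring rules: Lemmas~\ref{lem:rul1-8} and~\ref{lem:rul1-9} already classify the edges that survive Coloring Rules~\ref{rul:forest}--\ref{rul:1uncol2edge} uncolored, and Coloring Rules~\ref{rul:treevertexdeg2par2edge} and~\ref{rul:final} exactly cover the two remaining configurations. So for every edge $e$ of $Q_{ab}$, a representative of $e$ in $G$ exists and has been colored.

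Next, I would argue that $P_{ab}$ is a path from $a$ to $b$. Walk along $Q_{ab}$ from $h(a)$ to $h(b)$ edge by edge. For each edge $e$ of $Q_{ab}$, some path rule was triggered (because some coloring rule colored $e$), and this rule added a representative of $e$ to $P_{ab}$; when $e$ is incident on a tree vertex $x_T$ that is traversed by $Q_{ab}$, the corresponding path rule also adds a path inside $T$ joining the endpoint in $T$ of the representative of $e$ to either the endpoint in $T$ of the representative of the next edge of $Q_{ab}$ through $x_T$, or to $a$/$b$ if the latter lies in $T$ and $x_T$ is an endpoint of $Q_{ab}$. In each of Path Rules~\ref{prul:forest}, \ref{prul:1edge}, \ref{prul:2edgedeg4}, \ref{prul:2edgedeg3}, \ref{prul:nontreedeg3}, \ref{prul:treevertex2deg1}, \ref{prul:treevertex2deg2}, \ref{prul:treevertexdeg3}, \ref{prul:1uncol2edge}, \ref{prul:treevertexdeg2par2edge}, and \ref{prul:final}, the selection of endpoints is made exactly so that consecutive pieces meet at a common vertex. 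Concatenating these pieces in the order of $Q_{ab}$, we obtain a walk from $a$ to $b$; combined with the rainbow property below, it is a path.

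For the rainbow property, I would invoke Invariant~\ref{inv:part-rainbow}. Each path rule's lemma (Lemmas~\ref{lem:inv1-1edge}, \ref{lem:inv2edgedeg4}, \ref{lem:inv2edgedeg3}, \ref{lem:invnontreedeg3}, \ref{lem:invtreevertex2deg1}, \ref{lem:invtreevertex2deg2}, \ref{lem:invtreevertexdeg3}, \ref{lem:inv1uncol2edge}, \ref{lem:invtreevertexdeg2par2edge}, \ref{lem:invfinal}) shows that the invariant is preserved through its application, so after the last path rule no two edges in $P_{ab}$ share a color. Finally, to see that only colors in $[f]$ are used, observe that the global surplus colors $g_1,g_2$ are reserved to be used only on representatives of edges of $B$ incident on $L_S$, and no such edge is ever added to any $P_{ab}$ by the path rules above (all of which only add representatives of edges of $B_1$, edges of $\calF$, or shortcut edges, none of which are colored with $g_1$ or $g_2$). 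Hence every edge of $P_{ab}$ is colored from $\{1,2,\ldots,f\}$, completing the three claims. The main obstacle is the second step: one must carefully check, rule by rule, that the pieces added by the path rules indeed connect up into a single path; once this bookkeeping is in place, rainbow-ness and the color bound are essentially immediate from the already-proved invariants.
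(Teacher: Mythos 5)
Your proposal is correct and follows essentially the same route as the paper's proof: all edges of $B_1$ are colored by the end of Coloring Rule~\ref{rul:final}, the path rules stitch $P_{ab}$ together along $Q_{ab}$ (with connecting subpaths inside trees), Invariant~\ref{inv:part-rainbow} gives rainbow-ness, and the reservation of $g_1,g_2$ for $L_S$-incident edges gives the $[f]$ bound. The only detail worth making explicit in your connectivity step is that in Case~1 of Path Rule~\ref{prul:final} no representative of $e_1$ or $e_2$ is added at all — the shortcut edge $v_1v_2$ replaces both — so the walk is not broken because the shortcut bypasses exactly those two consecutive edges of $Q_{ab}$; you acknowledge shortcut edges only in the color-bound part, whereas the paper flags this exception where it matters.
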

\begin{proof}
	There are no more incomplete tree-vertices because Coloring Rule~\ref{rul:final} is applicable on each incomplete tree-vertex and each tree-vertex on which the rule is applied is completed during the rule.
	This means there are no uncolored 2-edges in $B_1$.
	Also, Coloring Rule~\ref{rul:1edge} colors all $1$-edges in $B_1$.	
Thus, each edge in $B_1$, and hence their representatives in $G$, have been colored.

	Whenever an edge in $B_1$ is colored by a coloring rule and if it is in $Q_{ab}$, we have added exactly one of its representatives to $P_{ab}$ in the proceeding path rule, except possibly Path Rule~\ref{prul:final} where we might have added a shortcut edge instead.
	In the case when a shortcut edge is added, the shortcut edge shortcuts the two consecutive edges in $Q_{ab}$ whose representatives were not added to $P_{ab}$ and hence the path is not broken.

	Also, whenever a tree $T$ has two edges of $P_{ab}$ incident on it, we have added the path between the endpoints of the edges in the tree to $P_{ab}$.
	And, whenever a tree $T$ with $a\in V(T)$ has one edge of $P_{ab}$ incident on it, we have added the path between the endpoints of the edge and $a$ in the tree to $P_{ab}$.
	Similarly, whenever a tree $T$ with $b\in V(T)$ has one edge of $P_{ab}$ incident on it, we have added the path between the endpoints of the edge and $b$ in the tree to $P_{ab}$.
	If there is a tree $T$ with $a,b\in V(T)$ we added the path between the endpoints of $a$ and $b$ in the tree to $P_{ab}$ during Path Rule~\ref{prul:forest}.
	It follows that  
 $P_{ab}$ is indeed a path between $a$ and $b$ in $G$.
 Since Invariant~\ref{inv:part-rainbow} holds, we know that $P_{ab}$ is a rainbow path.
 Since we have used only the colors from $1$ to $f$ so far, the lemma follows.
%
\end{proof}

So, now we only need to worry about how to rainbow connect vertices in $L_S$ between themselves and to the other vertices.
For this, we give the following coloring rule.

\begin{rul}
	\label{rul:leaves}
	For each $v\in L_S$, let $e$ be the unique 2-edge incident on $v$ which exists by Lemma~\ref{lem:vertex2edge}. Color $(e)_1$ with $g_1 = f+1$ and $(e)_2$ with $g_2 = f+2$. (Recall that $g_1$ and $g_2$ are the global surplus colors).
\end{rul}

Now, we complete the proof of the main theorem.

\begin{proof}[Proof of Theorem~\ref{thm:rc-fc}]
Consider any pair of vertices $a_1,a_2\in V(G)$.
If $a_1\in L_S$, let $e_1$ be the edge incident on $a_1$ that is colored with $g_1$, and
let $a$ be the other end of $e_1$.
If $a_1\notin L_S$, let $a=a_1$. 
If $a_2\in L_S$, let $e_2$ be the edge incident on $a_2$ that is colored with $g_2$, and
let $b$ be the other end of $e_2$.
If $a_2\notin L_S$, let $b=a_2$. 
We know there is a rainbow path $P_{ab}$ from $a$ to $b$ that uses only colors in $[f]$ due to Lemma~\ref{lem:b1path}.
We define path $P$ as follows.
If $a_1,a_2\in L_S$, then
$P:=a_1aP_{ab}ba_2$. 
If $a_1\in L_S$ but $a_2\notin L_S$, then
$P:=a_1aP_{ab}$. 
If $a_2\in L_S$ but $a_1\notin L_S$, then
$P:=P_{ab}ba_2$. 
If $a_1,a_2\notin L_S$, then
$P:=P_{ab}$. 
It is clear from the construction that $P$ is a path between $a_1$ and $a_2$.
Since edge $a_1a$ is colored with $g_1=f+1$, edge $ba_2$ is colored with $g_2=f+2$,
and path $P_{ab}$ uses only colors in $[f]$, 
the path $P$ is indeed a rainbow path.
\end{proof}

\section{Conclusions} 
\label{sec:conclusion}
We gave an upper bound of $\f(G)+2$ on $\rc(G)$, strengthening the intuition that tree-like and dominating structures are helpful in rainbow-connecting graphs. Our bound is tight up to an additive factor of $3$ as shown by any tree.
The question remains whether the bound can be improved to $f(G)-1$ so that the bound is tight even with respect to additive factors.
Also, then the bound would be a strict improvement over the bound $n-1$ obtained by coloring the edges of a spanning tree in distinct colors.
From our insight developed during the current work, we conjecture such a slightly stronger bound.
\begin{conjecture}
\label{con:tight}
A connected graph $G$ has $\rc(G) \leq \f(G) - 1$.
\end{conjecture}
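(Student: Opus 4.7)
The plan is to extend the Take~3 construction, saving three colors relative to the $\f(G)+2$ bound. I would retain the choice of $\calF$ (a maximum induced forest with minimum number of components) and of $B$ (a skeleton of lexicographically largest configuration vector), together with all of Coloring Rules~\ref{rul:forest}--\ref{rul:final}. The three-color improvement would come from (i)~eliminating both global surplus colors $g_1, g_2$ used by Coloring Rule~\ref{rul:leaves} by absorbing the $L_S$ handling into existing palettes, and (ii)~merging one per-tree surplus color $s(T^\ast)$ into an already-used internal color of a different tree.

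For step (i), each $v \in L_S$ has a unique 2-edge $e = vx_T$ with foots $u_1, u_2$. Instead of $g_1, g_2$, I would color $(e)_1$ and $(e)_2$ with two distinct internal colors of $T$ chosen from the foot-path, provided this path has at least two edges. When $T$ consists of a single edge $u_1u_2$, I would either reuse $s(T)$ together with $c(u_1u_2)$, or resort to a forest-swapping argument in the spirit of Lemmas~\ref{lem:noedgetotree}, \ref{lem:v2incoming}, and~\ref{lem:Tdashdegree3} to show that the forest can be reselected to avoid this configuration. A rainbow path between two $L_S$ vertices would then have to traverse internal edges of the corresponding trees, and Observation~\ref{obs:edgeexclusion} would provide the flexibility to avoid the specific foot-path edges used for the $L_S$ attachments.

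For step (ii), I would pick a tree $T^\ast$ that is a leaf of the inner skeleton $B_1$ or is otherwise peripheral, so that by the separation guarantees of Lemmas~\ref{lem:2edgedeg4paths2} and~\ref{lem:2edgedeg3paths2} the color $s(T^\ast)$ appears only on a tightly localised set of edges near $x_{T^\ast}$. By identifying $s(T^\ast)$ with the color of a suitably chosen edge in a different tree $T'$ whose internal colors only appear far from $x_{T^\ast}$, no path $P_{ab}$ could contain both occurrences, and Invariant~\ref{inv:part-rainbow} would still hold.

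The hard part will be preserving Invariant~\ref{inv:part-rainbow} throughout. In the current proof, the strict disjointness between the global, surplus, and internal palettes makes the invariant almost transparent; once these palettes overlap, every path rule from Path Rule~\ref{prul:forest} through~\ref{prul:final} inherits new potential conflicts and must be re-examined. The decisive case is that of two $L_S$ vertices $v_1, v_2$ whose 2-edges both attach to the same small tree $T$: the rainbow path $P_{v_1 v_2}$ must enter and exit $T$ through internal colors already reserved for the $L_S$ attachments, leaving essentially no slack. Handling it likely requires a genuinely new structural property of $G$ — for example a strengthening of Lemma~\ref{lem:vertex2edge} showing that under the optimal forest/skeleton choice each $L_S$ vertex has a 2-edge to a sufficiently large tree, or an enriched configuration vector that penalises problematic $L_S$-to-small-tree attachments — and this is where I expect the conjecture, if true, to demand its most substantive new idea.
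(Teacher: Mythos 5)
This statement is Conjecture~\ref{con:tight}, which the paper explicitly leaves open; there is no proof of it in the paper to compare against. The authors themselves only establish $\rc(G)\le \f(G)+2$ (Theorem~\ref{thm:rc-fc}) and state that they ``expect that proving this conjecture requires further extensive case analysis.'' So the first thing to say is that you are not reconstructing a known argument --- you are proposing an attack on an open problem.

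As a proof, your proposal has genuine gaps and does not close them. Step~(i) is incomplete exactly where it matters: when the tree $T$ receiving an $L_S$ attachment is a single edge, you offer two alternatives (reuse $s(T)$ with $c(u_1u_2)$, or re-select the forest) but verify neither; reusing $s(T)$ on an $L_S$ edge collides with the many places Coloring Rules~\ref{rul:1edge}, \ref{rul:nontreedeg3}, \ref{rul:treevertexdeg3}, \ref{rul:treevertexdeg2par2edge}, and~\ref{rul:final} already spend $s(T)$, and the forest-swapping lemmas you cite are proved under specific degree and adjacency hypotheses that need not hold for an arbitrary $L_S$ vertex. Step~(ii) is similarly unverified: identifying $s(T^\ast)$ with an internal color of another tree $T'$ breaks Invariant~\ref{inv:internalcolor}, on which the proofs of Lemmas~\ref{lem:invtreevertex2deg1}, \ref{lem:invtreevertex2deg2}, \ref{lem:invtreevertexdeg3}, \ref{lem:invtreevertexdeg2par2edge}, and~\ref{lem:invfinal} all rely, so every one of those arguments would have to be redone, not merely ``re-examined.'' Most tellingly, you yourself flag the decisive case (two $L_S$ vertices whose 2-edges attach to the same small tree) and state that handling it ``likely requires a genuinely new structural property'' which you do not supply. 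That is an honest and well-aimed research plan --- your diagnosis of where the slack is (the palette disjointness) and where it runs out (small trees serving multiple $L_S$ attachments) is consistent with why the paper stops at $\f(G)+2$ --- but it is not a proof of the conjecture.
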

We expect that proving this conjecture requires further extensive case analysis. 
Further, we note that Lauri~\cite{Lauri-phd} proposed the following stronger version of the above conjecture, discovered using the automated conjecture-making software GraPHedron~\cite{Melot2008}.
\begin{conjecture}[\cite{Lauri-phd}]
\label{con:forest}
A connected graph $G$ has $\src(G) \leq \f(G) - 1$.
\end{conjecture}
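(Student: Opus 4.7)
The plan is to adapt the inner-skeleton machinery from Take 3 to the shortest-path setting, budgeting $f-t$ colors for the edges of $\calF$ and the remaining $t-1$ colors (one surplus color per tree, with one tree receiving none) for the non-tree edges, while entirely eliminating the two global surplus colors used for $L_S$ in Section~\ref{sec:take3}. Accounting: $f - t + (t - 1) = f - 1$ colors, as required.

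First I would choose $\calF$ to be a maximum induced forest that, among all maximum induced forests with the fewest trees, additionally enjoys a distance-preserving property such as: for every pair $a,b \in V$ there is a shortest $a$-$b$ path in $G$ that hits each tree of $\calF$ in a connected subpath. Second, I would refine the skeleton selection of Take 3: rather than lexicographically maximizing the configuration vector, I would insist $B$ be a shortest-path arborescence of $H$ (a BFS-arborescence from a suitably chosen root tree vertex), breaking ties by the configuration vector. Third, I would re-execute Coloring Rules~\ref{rul:forest}--\ref{rul:final} verbatim, together with the corresponding Path Rules, but with two crucial upgrades built into every step: (i) each lemma of the form \emph{``there exists a rainbow path''} (e.g.~Lemmas~\ref{lem:2edgedeg4paths},~\ref{lem:2edgedeg3paths},~\ref{lem:treevertex2deg2}) must be strengthened to \emph{``there exists a rainbow path of length equal to $\dist_G$''}, and (ii) each time a path rule appends an internal-tree segment $T_{xy}$, we must be able to show this segment can be chosen so that the overall $P_{ab}$ remains a shortest $a$-$b$ path in $G$. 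For the $L_S$ vertices, which no longer have global surplus colors, I would exploit Lemma~\ref{lem:vertex2edge}: each $v \in L_S$ has a 2-edge $vx_T$; color its two representatives with $s(T)$ and a carefully chosen internal color of $T$ on the foot-path, analogous to Coloring Rule~\ref{rul:1uncol2edge}. A shortest $v$-$w$ path between two leaves then reads: cross one $2$-edge, a short detour in $T_v$, an inner-skeleton shortest rainbow path, a detour in $T_w$, cross another $2$-edge, and the whole length equals $\dist_G(v,w)$ because any shortest $v$-$w$ path must begin and end with such 2-edge traversals.

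The main obstacle, and the reason I expect this plan to demand substantial additional case work, is the tension between the shortest-path requirement and the aggressive color reuse in Coloring Rules~\ref{rul:2edgedeg3} (Case 3), \ref{rul:treevertexdeg3}, and \ref{rul:final} (Case 2). In each of these, the representatives of a 2-edge $vx_T$ are colored with a color internal to $T$ or with the surplus color of a non-adjacent tree, and the corresponding path rule routes $P_{ab}$ through extra edges of $T$ to avoid a repeated color. For rainbow connection, these detours add at most $|E(T)|$ harmless edges, but for strong rainbow connection the detour may exceed the true shortest distance, which is typically $1$ (a single 2-edge). Any proof must therefore show that whenever such a detour is forced, the offending shortest path between the endpoints in question is already realized by a different, previously colored, rainbow shortest path; equivalently, one needs a structural lemma saying that the ``difficult'' cases (e.g.\ Case~3 of Rule~\ref{rul:2edgedeg3} or Case~2 of Rule~\ref{rul:final}) only arise between pairs $a,b$ whose shortest paths avoid the offending 2-edge. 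Establishing this likely requires a local exchange argument on $\calF$: whenever such a bad configuration arises between a shortest-path-critical pair $(a,b)$, one should be able to modify $\calF$ to eliminate it while preserving maximality and the minimum-tree-count property. Handling these exchanges without breaking the global budget of $f-1$ colors appears to be the genuinely new ingredient beyond the techniques of Section~\ref{sec:take3}, and I expect it to be the chief technical bottleneck in resolving Conjecture~\ref{con:forest}.
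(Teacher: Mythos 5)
This statement is not proven in the paper at all: Conjecture~\ref{con:forest} is stated as an open problem (attributed to Lauri's thesis and discovered by automated conjecture-making), and the paper explicitly leaves open even the weaker Conjecture~\ref{con:tight} on $\rc(G)$, remarking that it likely ``requires further extensive case analysis.'' So there is no paper proof to compare against, and your submission is not a proof either --- it is a research plan that, by your own admission in the final sentence, leaves the ``chief technical bottleneck'' unresolved. That admitted gap is fatal: a proposal that identifies the genuinely new ingredient as something one ``expects'' to need is an outline, not an argument.

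Beyond the self-acknowledged gap, several concrete steps would fail or are unsupported as written. First, distances in $H$ (and hence in any BFS-arborescence $B$ of $H$) bear no fixed relation to distances in $G$, because traversing a contracted tree $T$ costs $0$ in $H$ but up to $\diam(T)$ edges in $G$; so insisting that $B$ be a shortest-path arborescence of $H$ does not make the lifted paths $P_{ab}$ shortest in $G$, and the required strengthening of Lemmas~\ref{lem:2edgedeg4paths}, \ref{lem:2edgedeg3paths}, and \ref{lem:treevertex2deg2} to paths of length exactly $\dist_G$ is simply false for the constructions given there (they deliberately detour inside $T$ to dodge a reused color). Second, the existence of a maximum induced forest with your ``distance-preserving property'' is asserted without justification and is not obviously true. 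Third, your treatment of $L_S$ breaks the rainbow property outright, not just the shortest-path property: if two leaves $v,v'\in L_S$ both attach by $2$-edges to the same tree vertex $x_T$ and you color a representative of each with $s(T)$, then any $v$--$v'$ path through the inner skeleton repeats $s(T)$; the paper's two global surplus colors exist precisely to give the two endpoints of such a path distinct entering colors, and removing them requires a replacement mechanism you have not supplied. Finally, note that the target $\src(G)\le \f(G)-1$ is strictly stronger than $\rc(G)\le \f(G)-1$, which is itself open; any correct proof must close both gaps, and your exchange-argument sketch for the ``difficult'' cases of Coloring Rules~\ref{rul:2edgedeg3}, \ref{rul:treevertexdeg3}, and \ref{rul:final} does not do so.
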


\paragraph{{\bf Acknowledgement.}} We thank Kurt Mehlhorn for funding the research visit of Erik Jan van Leeuwen and Juho Lauri to Max Planck Institute for Informatics, Saarbr\"{u}cken, which led to this work.
\bibliographystyle{abbrv}
\bibliography{forest-number}
\end{document}